\numberwithin{equation}{section}
\theoremstyle{plain}
\newtheorem{thm}{Theorem}[section]
\newtheorem*{thm*}{Theorem}
\newtheorem{lem}[thm]{Lemma}
\newtheorem{prop}[thm]{Proposition}
\newtheorem{cor}[thm]{Corollary}
\theoremstyle{definition} 
\newtheorem*{defn*}{Definition}
\newtheorem{defn}[thm]{Definition}
\newtheorem{exs}[thm]{Examples}
\newtheorem{exam}[thm]{Example}
\newtheorem{rem}[thm]{Remark}
\newtheorem{rems}[thm]{Remarks}
\newcommand{\dinf}{\mathrm{D}^{(\infty)}_{A,\underline{\sigma}}}
\newcommand{\N}{\mathbb{N}}
\newcommand{\der}{\partial^{[\underline{k}]}_{\underline{\sigma}}}
\newcommand{\aff}{A\left\lbrace \underline{\xi}/\eta \right\rbrace}
\title{Twisted calculus in several variables}
\author{Pierre Houédry}
\affil{Université de Caen Normandie}
\date{}
\begin{document}

\maketitle
\begin{abstract}
In this paper, we introduce novel concepts and establish a formal framework for twisted differential operators in the context of several variables. The focus is on twisted coordinates within Huber rings, which facilitate the construction of diverse rings of twisted differential operators. We establish an equivalence between modules equipped with twisted connections and those endowed with actions of twisted derivatives. Furthermore, we examine the convergence properties of twisted differential operators under specific conditions. As one of the main results, we extend the confluence theorem of Le Stum and Quirós to several variables. This work aligns with the ongoing advancements in $p$-adic Hodge cohomology and prismatic cohomology.
\end{abstract} 

\tableofcontents

\section{Introduction}
A $q$-difference equation is a relation of the form  
\[
E(x, f, \partial_q(f), \ldots, \partial_q^n(f)) = 0,
\]  
which involves an unknown function $f$ and its iterates under the $q$-difference operator:  
\[
\partial_q(f)(x) = \frac{f(qx) - f(x)}{(q-1)x}.
\]  
The primary motivation for studying such equations is practical. The goal is to discretize a differential equation in order to facilitate its analysis, a process known as ``deformation''. Conversely, recovering a differential equation from a $q$-difference equation as $q$ tends towards 1 is referred to as ``confluence''. From a practical standpoint, the key problem is to quantify the discrepancy between the solutions of the discretized equation obtained via deformation and those of the original differential equation. 

This problem can be approached from an arithmetic perspective, specifically within the realm of $p$-adic numbers. In 2000, Lucia Di Vizio proved a $q$-analogue of Grothendieck's $p$-curvature conjecture (see \cite{DiVizio}). Shortly after, in 2004, Yves André and Lucia Di Vizio investigated the phenomenon of confluence (\emph{cf.} \cite{ADV04}). They computed the Tannakian group of $q$-difference equations over the Robba ring, equipped with a Frobenius action, and demonstrated that it coincides with the Tannakian group of differential equations over the Robba ring with Frobenius action. Simultaneously, during his doctoral thesis, Andrea Pulita also explored these problems and established an equivalence between these categories by removing the assumptions on $q$ from \cite{ADV04} for equations on the affine line. Pulita introduced a principle called the \emph{propagation principle}. Once verified, this principle suffices to prove the equivalence between confluence and deformation on rings of $p$-adic functions. Pulita extended his results to curves and to actions of operators more general than multiplication by $q$ (\emph{cf.} \cite{pulita}). 

Interest in this theory increased substantially when Bhargav Bhatt, Matthew Morrow, and Peter Scholze published their work on integral $p$-adic cohomology (see \cite{BMS1}). Their construction enables specialization to both de Rham and étale cohomologies. They demonstrated that this cohomology can be computed locally using $q$-difference theory. Furthermore, Bhatt and Scholze introduced the concepts of \emph{prism} and \emph{prismatic cohomology} (\cite{prism}) to unify the various cohomologies they had previously defined. Remarkably, it seems that $q$-difference equations can once again be employed to compute prismatic cohomology.

In recent years, Bernard Le Stum, Adolfo Quirós, and Michel Gros have developed a more general theory (see \cite{SQ1}, \cite{SQ2}, \cite{LSQ3}, \cite{GLQ4}, \cite{GLS5}, and \cite{GLQ6}) that encompasses earlier results on $p$-adic $q$-difference equations (see \cite{ADV04}, \cite{pulita}). They generalized the notion of differential operators and extended their work to affinoid algebras. Their theory draws an analogy with the formalism established in EGA-IV (see \cite{Grothe}) and is guided by the classical framework of crystalline cohomology. However, their methods do not extend to multiple variables. As in classical calculus, transitioning from ordinary differential equations to partial differential equations is a highly non-trivial step. 

Matthew Morrow and Takeshi Tsuji have made significant progress by developing a multi-variable theory in the context of $q$-difference equations (see \cite{morrow2}). The present article serves as foundational work, aiming to generalize prior theories by addressing the case of several variables and incorporating more general twists than multiplication by $q$. Additionally, the work of Claude Sabbah (see \cite{CS}), though conducted over $\mathbb{C}$, bears a close connection to the research pursued here.

Section \ref{Huber rings} focuses on Huber rings, starting with their definition from \cite{huber}. We then cover key constructions: completion and tensor products, which are essential for extending one-variable results to our multi-variable setup.

In Section \ref{Topo étale morphisms}, we introduce the concept of a topologically étale morphism, an affine variant of \cite[Definition 2.3.13]{abbes}, along with the notion of \emph{good étale} morphism. On an adic ring, a topologically étale morphism is automatically good étale. This allows us to extend coordinate properties related to differentials with good étale morphisms, facilitating this extension under certain conditions.

In Section \ref{Twisted principal parts}, we follow a similar approach to the classical case to develop twisted differential calculus. However, when $A$ is equipped with multiple automorphisms $\underline{\sigma} = (\sigma_1, \ldots, \sigma_d)$, there is no natural notion of twisted powers of $I$ to apply the same formalism in several variables. Our work consists of finding the right definitions to extend as many results from the one-variable case to the multi-variable case as possible. Unlike \cite{GLQ4}, we proceed by first defining the module of twisted Taylor series $P_{A, (n)_{\underline{\sigma}}}$ for a Huber ring $R$ and an $R$-adic algebra $A$ with continuous linear $R$-endomorphisms $\underline{\sigma}$ that commute with each other (Definition \ref{module of twisted differential parts}). The construction of the different rings we want to compare relies on choosing coordinates $\underline{x}=(x_1,\ldots,x_d)$ with good properties to extend the one-variable results to several variables. The core concept enabling this extension is that of \emph{$\underline{\sigma}$-coordinates} (Definition \ref{coor}) and that of good $\underline{\sigma}$-coordinates (Definition \ref{goodcoor}).

In Section \ref{Derivations}, we define the module of twisted differential operators $\Omega^1_{A, \underline{\sigma}}$ (Definition \ref{twisted differential forms}) which allows us to recover the operators $\partial_{\underline{\sigma},1}, \ldots, \partial_{\underline{\sigma},d}$. However, there is no clear reason for $\partial_{\underline{\sigma},i}$ to be a $\sigma_i$-derivation (Definition \ref{sigma derivation}), which necessitates an additional hypothesis, leading to the definition of \emph{classical coordinates} (Definition \ref{classique}). This allows us to prove the following theorem:

\begin{thm*}[Theorem \ref{derivation}]
The (good) $\underline{\sigma}$-coordinates $\underline{x}$ are classical if and only if, for each $i = 1, \ldots, d$, $\partial_{\underline{\sigma},i}$ is a $\sigma_i$-derivation.
\end{thm*}

Given $\underline{\sigma}$-coordinates, we define the notion of a \emph{twisted connection} (Definition \ref{Twisted connection}) and establish the following
\begin{thm*}[Theorem \ref{equivcate}]
When the (good) $\underline{\sigma}$-coordinates $\underline{x}$ are classical, there is an equivalence of categories
\[
\nabla_{\underline{\sigma}}  \emph{-Mod}(A) \simeq \mathrm{T}_{A,\underline{\sigma}}\emph{-Mod}(A)
\]
between $A$-modules with a twisted connection and $A$-modules with an action of the derivations.
\end{thm*}

By adding the mild hypothesis that the coordinates are symmetric (Definition \ref{sym}), 
we define in Section \ref{twisted differential operators of infinite level} the ring of twisted differential operators of infinite level 
$\mathrm{D}_{A,\underline{\sigma}}^{(\infty)}$ (Definition \ref{twisted differential operator}). To relate to one-variable case of $q$-differences we define $\underline{q}$-coordinates in the following way:

\begin{defn*}
We say that the coordinates $\underline{x}=(x_1,\ldots,x_d)$ are (good) \emph{$\underline{q}$-coordinates} if there exists $q_1,\ldots,q_d \in R$ such that $\underline{x}$ are (good) $\underline{\sigma}$-coordinates for the $\underline{\sigma}$ defined by
\[
\forall i,j=1, \ldots, d, \quad\sigma_i(x_j) = q_i^{\delta_{i,j}}x_j.
\] 
For notational simplicity, we will henceforth write
\[
\nabla_{\underline{q}} := \nabla_{\underline{\sigma}}, \qquad
\mathrm{D}^{(\infty)}_{A,\underline{q}} := \mathrm{D}^{(\infty)}_{A,\underline{\sigma}}
\]
and similarly for the associated categories of modules.
\end{defn*}

This leads to the following result:

\begin{thm*}[Theorem \ref{equivate 3}]
Assume that the coordinates are symmetric (good) $\underline{q}$-coordinates and that $\forall n \in \N, ~\forall i=1,\ldots,d,~\frac{q_i^n-1}{q_i-1} \in R^\times$. 
There exists an equivalence of categories
\[
\nabla_{\underline{q}}^{\mathrm{Int}}\emph{-Mod}(A) \simeq \mathrm{D}_{A,\underline{q}}^{(\infty)}\emph{-Mod},
\]
where $\nabla_{\underline{q}}^{\mathrm{Int}}\emph{-Mod}(A)$ designates the category of $A$-modules endowed with an integrable twisted connection.
\end{thm*}
Thus, the definition of several properties on $\underline{\sigma}$-coordinates provides a suitable framework for constructing the theory of twisted differential operators in several variables, as it permits to recover the usual properties of twisted differential calculus under natural hypotheses.

In the final section, we investigate the convergence of twisted differential operators in several variables (Definition \ref{radius of convergence}). We begin by assuming that \(R\) is a Tate ring, and that \((A, \underline{\sigma})\) is a complete Noetherian twisted \(R\)-adic algebra of order \(d\), where \(A\) is also a Tate ring. Additionally, we assume that the norm on \(A\) is contractive with respect to the endomorphisms $\underline{\sigma}$. Then, under certain convergence assumption on twisted differential operators, we define the ring \(\mathrm{D}^{(\eta)}_{\underline{\sigma}}\) of twisted differential operators of radius \(\eta\) for certain $\eta \in \mathbb{R}_{>0}$ (Definitions \ref{operators of radius eta} and \ref{ring of twisted differential operators of radius eta}), as well as its direct limit \(\mathrm{D}_{\underline{\sigma}}^{(\eta^\dagger)} = \varinjlim_{\eta'<\eta} \mathrm{D}_{\underline{\sigma}}^{(\eta')} \). This framework establishes the foundation for twisted differential calculus and allows us to prove the following result:

\begin{thm*}[Theorem \ref{confluence}]
Assume that \( \underline{x} \) are classical and symmetric (good) \( \underline{q} \)-coordinates and that $\forall n \in \N; ~\forall i=1, \ldots, d,~ \frac{q_i^n-1}{q_i-1} \in R^\times$. Assume moreover that \( A \) is strongly \( \eta^\dagger \)-convergent. 
Then we have the equivalence of categories
\[
\nabla_{\underline{q}}^{\mathrm{Int}}\emph{-Mod}_{\mathrm{tf}}^{(\eta^\dagger)}(A) \simeq \mathrm{D}_{\underline{q}}^{(\eta^\dagger)}\emph{-Mod}_{\mathrm{tf}}(A)
\]
between the category of \( A \)-modules of finite type endowed with an integrable twisted connection that are \( \eta^\dagger \)-convergent and the category of \( \mathrm{D}_{\underline{q}}^{(\eta^\dagger)} \)-modules of finite type over \( A \). This equivalence is compatible with de Rham cohomology on left hand side and $\emph{Ext}$ groups on the right hand side.
\end{thm*}

We conclude by proving the following theorem, which extends the confluence theorem of \cite{GLQ4} to several variables, in resonance with the approaches developed in \cite{ADV04} and \cite{pulita}.

\begin{thm*}[Theorem \ref{thm:confluence-bis}]
Let \( K \) be a non-archimedean field of characteristic 0, complete with respect to a non-archimedean norm. Let \( A \) be a Tate \( K \)-algebra, and let $\underline{q}=(q_1, \ldots,q_d)$ be elements of $K$ such that, for every $i=1,\ldots,d$ and $n\geq1, \tfrac{q_i^n-1}{q_i-1} \in K^\times$. Assume that $A$ admits elements $\underline{x}=(x_1, \ldots,x_d)$ which are classical and symmetric (good) $\underline{q}$-coordinates as well as symmetric (good) $\underline{\emph{Id}}$-coordinates, and asusme moreover that $A$ is strongly $\eta^\dagger$-convergent with respect to  \( \underline{q} \) and with respect to $\underline{\emph{Id}}$, and that \( \eta > \rho(\underline{q}) \). 
Then we have the equivalence of categories
\[
\nabla_{\underline{q}}^{\emph{Int}}\text{-}\emph{Mod}_{\emph{tf}}^{(\eta^\dagger)}(A) \simeq \nabla_{\underline{\emph{Id}}}^{\emph{Int}}\emph{-}\emph{Mod}_{\emph{tf}}^{(\eta^\dagger)}(A)
\]
which is compatible with de Rham cohomologies on both sides.
\end{thm*}

\section{Huber rings}\label{Huber rings}
In what follows, we will assume that every ring is unitary.
\begin{defn} \label{anneauxdehuber}
Let $A$ be a topological commutative ring.
\begin{enumerate}
    \item We say that $A$ is a \emph{Huber ring} if there exists an open subring $A_0$ of $A$ and a finitely generated ideal $I_0$ of $A_0$ such that $\lbrace I_0^n\rbrace_{n \in \mathbb{N}}$ is a fundamental system of neighbourhoods of 0 in $A_0$. In that case, we say that $A_0$ is a \emph{ring of definition}, $I_0 $ is an \emph{ideal of definition} and that $(A_0,I_0)$ is a \emph{couple of definition}.
    \item We say that $A$ is \emph{adic} in the case where it is a Huber ring and we can choose $A_0=A$. 
    \item We say that $A$ is \emph{discrete} in the case where it is a Huber ring and we can choose $A_0=A$ and $I_0=0$.
\end{enumerate}
\end{defn}

\begin{exs}\label{huberexs}
\begin{enumerate}
\item The ring of integers $\mathbb{Z}$ endowed with the discrete topology is a discrete ring.
\item For a prime number $p$, $\mathbb{Z}_p$ endowed with the $p$-adic topology is an adic ring: the ideal $(p)$ is an ideal of definition.
\item Let $A$ be a Huber ring. For every $n \in \N$, the ring $A[X_1,\ldots,X_n]$ can be endowed with a structure of Huber ring that induces the topology on $A$: we can choose $(A_0[X_1,\ldots,X_n],I_0A_0[X_1,\ldots,X_n])$ as a couple of definition.
\item A field \( K \) is referred to as a \emph{non-archimedean field} if its topology can be defined by a nontrivial non-archimedean valuation $|\cdot|$. In this context, there exists an element \( \pi \in K \) such that \( 0 < |\pi| < 1 \), and \( K \) is a Huber ring. Specifically, the subset \( K_0=\left\lbrace x \in K \mid |x| \leq 1 \right\rbrace \) forms a ring of definition, and the principal ideal generated by \( \pi \) is an ideal of definition. \label{non-archimedean field}
\item Let $d \in \mathbb{N}$. In the context previously described, we consider the following ring
\[
K\left\lbrace \underline{X} \right\rbrace=\left\lbrace \sum_{\underline{k}\in \mathbb{N}^d} a_{\underline{k}}\underline{X}^{\underline{k}},a_{\underline{k}} \in K, |a_{\underline{k}}| \rightarrow 0 \text{ when } |\underline{k}| \rightarrow +\infty \right\rbrace ,
\]
equipped with the topology induced by the Gauss norm
\[
\left\lVert  \sum_{\underline{k} \in \mathbb{N}^d} a_{\underline{k}} \underline{X}^{\underline{k}} \right\rVert =  \sup_{\underline{k} \in \mathbb{N}^d} |a_{\underline{k}}|.
\]
With this topology, the ring $K\left\lbrace \underline{X} \right\rbrace$ is a Huber ring. A couple of definition is given by $(K_0\left\lbrace \underline{X} \right\rbrace, I_0)$ where
\begin{align*}
    K_0\left\lbrace \underline{X} \right\rbrace:=&\left\lbrace \sum_{\underline{k} \in \mathbb{N}^d} a_{\underline{k}} \underline{X}^{\underline{k}} \in K\lbrace \underline{X} \rbrace, a_{\underline{k}} \in K_0 \right\rbrace \\
  \text{ and }  I_0:=&\left\lbrace \sum_{\underline{n} \in \mathbb{N}^d} a_{\underline{k}} \underline{X}^{\underline{k}} \in K\lbrace \underline{X} \rbrace, a_{\underline{k}} \in \pi K_0 \right\rbrace.
\end{align*}

\end{enumerate}
\end{exs}

\begin{defn}
Let \( A \) and \( B \) be two Huber rings. A \emph{morphism of Huber rings} \( u: A \rightarrow B \) is a ring homomorphism that is continuous with respect to the topologies on \( A \) and \( B \). This implies that there exist a ring of definition \( A_0 \) for \( A \) (resp. \( B_0 \) for \( B \)) and an ideal of definition \( I_0 \subset A_0 \) (resp. \( J_0 \subset B_0 \)) such that \( u(A_0) \subset B_0 \) and \( u(I_0) \subset J_0 \) (see \cite[Lemma II.1.3.5]{Morel}). If we can take $A_0, I_0, B_0$ as above so that \( u(I_0)B_0 \) is an ideal of definition of \( B \), then the morphism \( u \) is said to be \emph{adic}.
\end{defn}

\begin{exs}
\begin{enumerate}
\item Every morphism of discrete rings is adic.
\item The identity $\mathbb{Z} \rightarrow \mathbb{Z}$ is not continuous if $\mathbb{Z}$ is endowed with the $p$-adic topology on the left side and the discrete topology on the right side.
\item If $A$ is a discrete ring and $A[T]$ is endowed with the $T$-adic topology, then the canonical morphism $A \rightarrow A[T]$ is continuous but not adic.
\end{enumerate}
\end{exs}

\begin{prop}\label{moreladique}
Let $u:A\rightarrow B$ be an adic morphism of Huber rings. If $A_0$ and $B_0$ are rings of definition of $A$ and $B$ such that $u(A_0) \subset B_0$, then, for every ideal of definition $I_0$ of $A_0$, the ideal $u(I_0)B_0$ is an ideal of definition of $B$. 
\end{prop}

\begin{proof}
 We refer the reader to \cite[Proposition II.1.3.3]{Morel}.
\end{proof}

\begin{defn}
If $R \rightarrow A$ is a morphism of Huber rings, $A$ is said to be a \emph{Huber $R$-algebra}. If the morphism is adic $A$ is said to be an \emph{$R$-adic alegbra}.
\end{defn}

\begin{rem}
An $R$-adic algebra is not 
necessarily an adic ring.
\end{rem}

\begin{defn}
Let $A$ be a topological ring. A subset $E$ of $A$ is
\begin{enumerate}
\item $\emph{bounded}$ if for every neighbourhood $U$ of $0$ in $A$ there exists an open neighbourhood $V$ of $0$ such that $ax \in U$ for every $a \in E$ and $x \in V$,
\item \emph{power-bounded} if $\bigcup_{n \geq 1} E(n)$ is bounded, where $E(n)=\left\lbrace \prod\limits_{i=1}^n f_i, ~ f_i \in E \right\rbrace$,
\item \emph{topologically nilpotent} if for every neighbourhood $W$ of $0$ in $A$, there exists a positive integer $N$ such that $E(n) \subset W$ for $n \geq N$.
\end{enumerate}

If \( E \) is a singleton \( \lbrace f \rbrace \), we say that \( f \) is \emph{power-bounded} (resp. \emph{topologically nilpotent}) if the set \( E \) possesses the corresponding property.

\end{defn}

\begin{prop}
Let \( A \) be a topological ring, and let \( E = \{ f_1, \ldots, f_r \} \) be a finite subset of \( A \). Then, \( E \) is \emph{power-bounded} if and only if for \( i = 1, \ldots, r \), \( f_i \)  is \emph{power-bounded}.
\end{prop}

\begin{proof}
The forward direction is straightforward, so we will provide details for the converse. By definition, for each neighborhood \( U \) of \( 0 \) in \( A \), there exists a neighborhood \( W_r \) of \( 0 \) in \( A \) such that for every \( n_r \in \mathbb{N} \),
\[
f_r^{n_r} W_r \subset U.
\]
By the same argument, there exists a sequence \( (W_1, \ldots, W_{r-1}) \) of neighborhoods of \( 0 \) in \( A \) such that for every \( (n_1, \ldots, n_{r-1}) \in \mathbb{N}^{r-1} \),
\[
\forall i = 1, \ldots, r-1, \quad f_i^{n_i} W_i \subset W_{i+1}.
\]
Hence, we have
\[
f_1^{n_1} \cdots f_r^{n_r} W_1 \subset U.
\]
This completes the proof.
\end{proof}

In subsequent discussions, it will be beneficial to define a semi-norm on our rings to explore issues related to convergence. For this purpose, we will introduce Tate rings. These are Huber rings whose topology is induced by an ultrametric submultiplicative norm.

\begin{defn}\label{def:tate_ring}
Let $A$ be a Huber ring. We say that $A$ is a \emph{Tate ring} if it has a topologically nilpotent unit.
\end{defn}

\begin{exs}
\begin{enumerate}
\item The ring $\mathbb{Q}$ endowed with the $p$-adic topology is a Tate ring.
\item Every ring \( A \) can be endowed with the \( p \)-adic topology, and in this case, the ring \( A\left[\frac{1}{p}\right] \) becomes a Tate ring.
\item Every non-archimedean field is a Tate ring.
\end{enumerate}
\end{exs}

\begin{prop}\label{norme}
The topology of a Huber ring can always be defined by a submultiplicative semi-norm. Furthermore, in the case of a Tate ring, the topology can be defined by a submultiplicative norm.
\end{prop}

\begin{proof}
Let $A$ be a Huber ring and $(A_0,I_0)$ be a couple of definition. For an element $f$ of $A$ we set
\[
\lVert f \rVert = 
\begin{cases}
    e^n & \text{if } fI_0^n \subset A_0 \text{ but } fI_0^{n-1} \not\subset A_0,\\
    e^{-n} & \text{if } f \in I_0^n \text{ but } f \not\in I_0^{n+1}, \\
    0 & \text{if }  f \in \bigcap I_0^n.
\end{cases}
\]
We then have, $A_0= \left\lbrace f \in A, \lVert f \rVert \leq 1 \right\rbrace$ and $I_0=\left\lbrace f \in A, \lVert f \rVert < 1 \right\rbrace$. 
\end{proof}

\begin{defn}\label{contractante}
A submultiplicative norm \( \lVert \cdot \rVert \) on a ring \( A \) is said to be \emph{contractive} with respect to an endomorphism \( \varphi \) of \( A \) if, for all \( x \in A \), it holds that \( \lVert \varphi(x) \rVert \leq \lVert x \rVert \).
\end{defn}

The tensor product of two Huber rings does not generally exist in a well-defined manner. However, when extending scalars in the context of rings of series and polynomials, it becomes necessary to establish certain conditions under which the tensor product can exist within the category of Huber rings.\newline

Let $R$ be a Huber ring, $A$ and $B$ be two $R$-adic algebra. Choose a couple of definition $(R_0,I)$ of $R$ and select rings of definition $A_0$ and $B_0$ of $A$ and $B$ such that the image of $R_0$ in $A$ is contained in $A_0$ and the image of $R_0$ in $B$ is contained in $B_0$. Let $J$ denote the ideal of $\mathrm{Im}(A_0\otimes_{R_0}B_0 \rightarrow A \otimes_R B)$ that is generated by the image of $I$.

\begin{prop}
   With the notations introduced above, if we endow $\mathrm{Im}(A_0\otimes_{R_0}B_0 \rightarrow A \otimes_R B)$ with the $J$-adic topology and if we equip $A\otimes_R B$ with the unique structure of topological group for which $\mathrm{Im}(A_0\otimes_{R_0}B_0 \rightarrow A \otimes_R B)$ is an open subgroup then $A\otimes_R B$ is a Huber ring with $(\mathrm{Im}(A_0\otimes_{R_0}B_0 \rightarrow A \otimes_R B),J)$ as a couple of definition.
\end{prop}

\begin{proof}
    We refer the reader to \cite[Proposition II.3.2.1]{Morel}.
\end{proof}

Later, to address issues related to convergence, it will be necessary to consider complete topological rings. 

\begin{prop}\label{complété}
Let $A$ be a Huber ring and $(A_0,I)$ be a couple of definition of $A$. Set $\hat{A}= \varprojlim_{n \geq 0} A/I^n, \hat{A_0}= \varprojlim_{n \geq 0} A_0/I^n$ (as an abelian group). Then:
\begin{enumerate}
\item If we put the unique topology on $\hat{A}$ for which $\hat{A_0}$ is an open subgroup, then $\hat{A}$ is a complete topological group.
\item There is a unique ring structure on $\hat{A}$ that makes the canonical map $A \rightarrow \hat{A}$ continuous, and $\hat{A}$ is a topological ring.
\item The ring $\hat{A}$ is a Huber ring and $(\hat{A_0}, I_0\hat{A_0})$ is a couple of definition of $\hat{A}$. Moreover, the canonical map $A \rightarrow \hat{A}$ is adic.
\item  The canonical map $\hat{A_0}\otimes_{A_0} A \rightarrow \hat{A}$ is an isomorphism.
\item For every natural integer $n$, the induced map $A_0/I^nA_0 \rightarrow \hat{A_0}/I^n\hat{A_0}$ is an isomorphism.
\end{enumerate}
\end{prop}

\begin{proof}
We refer the reader to \cite[Theorem II.3.1.8, Corollary II.3.1.9]{Morel}. \qedhere
\end{proof}

\begin{defn}
If $A$ is a Huber ring, the Huber ring $\hat{A}$ defined in Proposition \ref{complété} is called the \emph{completion} of $A$.
\end{defn}

\begin{rems}
\begin{enumerate}
\item If \( A \) is a complete Huber ring, then every ring of definition of \( A \) is also complete. Any ring of definition is closed in \( A \) because it is an open subgroup of \( A \).

\item A ring with the discrete topology is a complete adic ring. There exists a fully faithful functor from the category of rings into the category of complete adic rings.
\end{enumerate}
\end{rems}

\section{Topologically étale morphisms}\label{Topo étale morphisms}
We will define topologically étale morphisms. Within this class, we will focus on good étale morphisms. This is the type of étale morphisms along which we will be able to extend the coordinates that we will work with.

\begin{defn}\label{topttf}
A morphism of Huber rings $\varphi: A \rightarrow B$ is \emph{topologically of finite type} (resp. of \emph{finite presentation}) if it is adic and if there exists rings of definition $A_0$ and $B_0$ of $A$ and $B$ such that $\varphi(A_0) \subset B_0$ and
\begin{enumerate}
\item for every ideal of definition $I \subset A_0$, the induced map
\[
A_0/I \rightarrow B_0/ \varphi (I)B_0
\]
is of finite type (resp. finite presentation),
\item the map $A \otimes_{A_0} \widehat{B_0} \rightarrow \widehat{B}$ is of finite type (resp. finite presentation).
\end{enumerate}
\end{defn}

\begin{rem}
This definition is a variant of the definition 2.3.13 of \cite{abbes} that specifies only to formal schemes, which corresponds to adic rings in the affine case. The conditions here are weaker than the the algebraic version of the definition 1.8.18 of \cite{abbes}.
\end{rem}

\begin{exs}
\begin{enumerate}
\item For discrete rings, this definition reduces to the classical algebraic notion of finite type (resp. finite presentation).
\item Let \( \varphi: A \rightarrow B \) be a morphism of finite type (resp. finite presentation) between two rings. For an ideal \( I \) of finite type in \( A \), it is possible to equip \( A \) and \( B \) with the \( I \)-adic topology. Under this topology, the morphism \( \varphi \) is topologically of finite type (resp. finite presentation).

\item For a non-archimedean field $K$, the morphism $K\rightarrow K\left\lbrace \underline{X} \right\rbrace$ is topologically of finite presentation.
\end{enumerate}
\end{exs}

\begin{prop}\label{compotpf}
Let \( f: A \rightarrow B \) and \( g: B \rightarrow C \) be two morphisms between adic rings.
\begin{enumerate}
\item If both \( f \) and \( g \) are topologically of finite type (resp. finite presentation), then \( g \circ f \) is also topologically of finite type (resp. finite presentation).
\item If \( g \circ f \) is topologically of finite type and \( g \) is adic, then \( f \) is topologically of finite type.
\item If \( g \circ f \) is topologically of finite presentation and \( g \) is topologically of finite type, then \( f \) is topologically of finite presentation.
\end{enumerate}
\end{prop}

\begin{proof}
This is the affine version of the Proposition 2.3.18 of \cite{abbes}.
\end{proof}

\begin{prop}\label{ilexiste}
Let $\varphi: A \rightarrow B$ be an adic morphism between adic rings, the following statements are equivalent:
\begin{enumerate}
\item The morphism $\varphi$ is topologically of finite type (resp. finite presentation).
\item For every ideal of definition $I$ of $A$, the induced map 
\[
A/I \rightarrow B/ \varphi (I)B
\]
is of finite type (resp. finite presentation).
\end{enumerate}
\end{prop}

\begin{proof}
The implication \((2) \Rightarrow (1)\) is straightforward. It remains to prove \((1) \Rightarrow (2)\). Without loss of generality, we may assume that the rings \( A \) and \( B \) are complete by Proposition \ref{complété}. By hypothesis, the morphism is adic. Let \( A_0 \) and \( B_0 \) be rings of definition for \( A \) and \( B \) that satisfy the conditions of the definition. Take an ideal of definition $I_0$ of $A_0$. Then, for any $n \geq 1$, the induced map
$A_0/I_0^n \rightarrow B_0/\varphi(I_0^n)B_0$ is of finite type (resp. finite presentation). For every ideal of definition $I$ of $A$, take $n\geq 1$ such that $I_0^n \subset I \cap A_0$. Then, by extending scalars and using the second condition of the definition, we obtain that the map
\begin{align*}
    A/I &\simeq A/I \otimes_{A_0} A_0/I_0^n \rightarrow A/I \otimes_{A_0} B_0/\varphi(I_0^n)B_0  \simeq A/I \otimes_{A_0} \hat{B_0}/\varphi(I_0^n)\hat{B_0} \\
    & \simeq (A \otimes_{A_0}\hat{B_0})/\varphi(I)I(A \otimes_{A_0} \hat{B_0}) \rightarrow \hat{B}/\varphi(I)\hat{B} \simeq B/\varphi(I)B
\end{align*}
is of finite type (resp. finite presentation).
\end{proof}

\begin{prop}
An adic morphism \( A \rightarrow B \) between adic rings is topologically of finite presentation if and only if the induced completed morphism \( \widehat{A} \rightarrow \widehat{B} \) is topologically of finite presentation.
\end{prop}

\begin{proof} 
Let us first consider the converse. Assume that \( \widehat{A} \rightarrow \widehat{B} \) is topologically of finite presentation. Then, for every ideal of definition \( I \) of \( A \), the ideal \( I\widehat{A} \) is an ideal of definition of \( \widehat{A} \), as established by Proposition \ref{complété}. Also, $IB$ is an ideal of definition of $B$ and so $I \widehat{B}$ is an ideal of definition of $\widehat{B}$ again by Proposition \ref{complété}. The same proposition provides the following isomorphisms:
\[
\widehat{A}/I\widehat{A} \simeq A/I \quad \text{and} \quad \widehat{B}/I\widehat{B} \simeq B/IB.
\]
This ensures that for every ideal of definition \( I \) of \( A \), the morphism \( A/I \rightarrow B/IB \) is of finite presentation. Thus, by Proposition \ref{ilexiste}, $A \rightarrow B$ is topologically of finite presentation. 
Now, let us prove the direct implication. Suppose \( I' \) is an ideal of definition for \( \widehat{A} \). For every ideal of definition \( I \) of \( A \), there exists an integer \( n \in \mathbb{N} \) such that \( I^n\widehat{A} \subset I' \). Then, by using Proposition \ref{complété}, we see that the map $\hat{A}/I^n \hat{A} \rightarrow \hat{B}/I^n\hat{B}$ is of finite presentation, and so the map
\[
\hat{A}/I' = \hat{A}/I'\otimes_{\hat{A}}\hat{A}/I^n\hat{A} \rightarrow \hat{A}/I' \otimes_{\hat{A}}\hat{B}/I^n \hat{B} = \hat{B}/I' \hat{B}
\]
is also of finite presentation.
\end{proof}

\begin{lem}\label{extensionf}
Let \( A \), \( B \), and \( A' \) be adic rings, with \( A \rightarrow B \) a morphism that is topologically of finite presentation, and \( A \rightarrow A' \) an adic morphism. Then, the morphism \( A' \rightarrow A' \otimes_A B \) is topologically of finite presentation.
\end{lem}

\begin{proof}
Let $I$ be an ideal of definition of $A$ and let $I'$ be any ideal of definition of $A'$. 
Then, since the morphism $A \to A'$ is adic, there exists some $n \ge 1$ such that
$I^n A' \subset I'$. Since the map $A/I^n \to B/I^n B$ is of finite presentation by assumption, the map
\[
A'/I' \simeq A'/I' \otimes_A A/I^n \to A'/I' \otimes_A B/I^n B 
\simeq (A' \otimes_A B)/I'(A' \otimes_A B)
\]
is also of finite presentation.

\end{proof}

\begin{lem}\label{prodtenstpf}
Let \( R \) be an adic ring, and let \( A \), \( A' \), \( B \), and \( B' \) be \( R \)-adic algebras. If the morphisms \( A \rightarrow B \) and \( A' \rightarrow B' \) are topologically of finite presentation, then the induced morphism \( A \otimes_R A' \rightarrow B \otimes_R B' \) is also topologically of finite presentation.
\end{lem}

\begin{proof}
By extending scalars and applying Lemma \ref{extensionf}, the morphisms
\[
A \otimes_R A' \rightarrow B \otimes_R A' \quad \text{and} \quad B \otimes_R A' \rightarrow B \otimes_R B'
\]
are topologically of finite presentation. Using Proposition \ref{compotpf}, we conclude that the morphism \( A \otimes_R A' \rightarrow B \otimes_R B' \) is topologically of finite presentation.
\end{proof}

\begin{defn}
A morphism of adic rings $A \rightarrow B$ is \emph{formally étale} if for every commutative diagram of adic rings
\[
\xymatrix{ A \ar[d] \ar[r] & C \ar@{->>}[d] \\ B \ar[r] & C/\mathfrak{c} 
}
\]
where $C$ is a complete adic ring and $\mathfrak{c}$ is a nilpotent ideal, there exists a unique morphism $B \rightarrow C$ making the diagram commute.
The morphism \( A \rightarrow B \) is called \emph{topologically étale} if it is both formally étale and topologically of finite presentation.
\end{defn}

\begin{exs}
\begin{enumerate}
 \item In the case of discrete rings, this definition reduces to the classical notion of formally étale and étale morphisms.
\item Let \( \varphi: A \rightarrow B \) be an étale morphism between rings. For an ideal \( I \) of finite type in \( A \), one can equip \( A \) and \( B \) with the \( I \)-adic topology. In this case, the morphism \( \varphi \) is adic, topologically of finite presentation, and formally étale, making it topologically étale.
\item For a non-archimedean field $K$, the canonical morphism $K_0[\underline{X}]\rightarrow K_0\left\lbrace \underline{X} \right\rbrace$ is topologically étale, where $K_0$ is as in 4. of Examples \ref{huberexs}.
\end{enumerate}
\end{exs}

\begin{prop}
Let $A$ and $B$ be two adic rings and $\varphi: A \rightarrow B$ be an adic morphism. The following statements are equivalent
\begin{enumerate}
    \item The morphism $\varphi$ is formally étale.
    \item For every diagram of adic rings
\[
\xymatrix{ A \ar[d] \ar[r] & C \ar@{->>}[d] \\ B \ar[r] & C/\mathfrak{c} 
}
\]
where $C$ is a complete adic ring and $\mathfrak{c}$ is a topologically nilpotent ideal, there exists a unique morphism $B \rightarrow C$ that makes the diagram commute.
\end{enumerate}
\end{prop}

\begin{proof}
The implication \((2) \Rightarrow (1)\) is clear, as a nilpotent ideal is topologically nilpotent. It remains to prove \((1) \Rightarrow (2)\). Consider a commutative diagram of adic rings
\[
\xymatrix{ A \ar[d] \ar[r] & C \ar@{->>}[d] \\ B \ar[r] & C/\mathfrak{c} 
}
\]
where \( C \) is a complete adic ring and \( \mathfrak{c} \) is a topologically nilpotent ideal. 
For an ideal of definition $I$ of $C$ and every natural integer $k$, this diagram
induces the following commutative diagram

\[
\xymatrix{ A \ar[d] \ar[r] & C/I^kC \ar@{->>}[d] \\ B \ar[r] & (C/I^k)/\mathfrak{c}.
}
\]

Since $I^k$ is an ideal of definition of $C$, $\mathfrak{c}$ is nilpotent in $C/I^k$.
Furthermore, $C/I^k$ is complete for the discrete topology, so we can lift
the morphism to obtain $B \to C/I^k$. By passing to the completion, we obtain
the required morphism $B \to C$.
\end{proof}

\begin{prop}\label{composition}
Let \( f: A \rightarrow B \) and \( g: B \rightarrow C \) be two morphisms of adic rings. If both \( f \) and \( g \) are topologically étale, then the composition \( g \circ f \) is also topologically étale.
\end{prop}

\begin{proof}
Since the composition of two formally étale morphisms is itself formally étale, we can conclude that the morphism \( g \circ f \) is formally étale. By applying Proposition \ref{compotpf}, it follows that \( g \circ f \) is also topologically étale.
\end{proof}

\begin{prop}\label{tenseur}
Let \( R \) be an adic ring, and let \( A \), \( B \), \( C \), and \( D \) be \( R \)-adic algebras that are also adic rings. Suppose \( \varphi: A \rightarrow C \) and \( \psi: B \rightarrow D \) are topologically étale morphisms between adic rings. Then the induced morphism 
\[\varphi \otimes_R \psi: A \otimes_R B \rightarrow C \otimes_R D \]
is also topologically étale.
\end{prop}

\begin{proof}
Consider a commutative diagram of adic rings:
\[
\xymatrix{
A \otimes_R B \ar[d] \ar[r] & E \ar[d] \\
C \otimes_R D \ar[r] & E/\mathfrak{e}
}
\]
where \( E \) is complete and \( \mathfrak{e} \) is a nilpotent ideal. This diagram induces the following commutative diagrams:
\[
\xymatrix{
A \ar[d] \ar[r] & E \ar[d] \\
C \ar[r] & E/\mathfrak{e}
}
\quad \quad
\xymatrix{
B \ar[d] \ar[r] & E \ar[d] \\
D \ar[r] & E/\mathfrak{e}
}
\]
By hypothesis, there exist unique liftings \( C \rightarrow E \) and \( D \rightarrow E \) that make these diagrams commute. By the universal property of the tensor product, there exists a unique map \( C \otimes_R D \rightarrow E \) that makes the original diagram commute. The fact that the morphism is topologically of finite presentation follows from Proposition \ref{prodtenstpf}.
\end{proof}

\begin{defn}
A morphism $\varphi: A \rightarrow B$ of Huber rings is said to be \emph{good étale} if there exist rings of definition $A_0$ and $B_0$ of $A$ and $B$ satisfying $\varphi(A_0)\subset B_0$ such that the induced morphism $\varphi_0: A_0 \rightarrow B_0$ is topologically étale and $ B=A \otimes_{A_0} B_0$. The triplet $(\varphi_0,A_0,B_0)$ is said to be a \emph{good model} of $\varphi$. 
\end{defn}

\begin{exs}
\begin{enumerate}
    \item In the case of discrete rings, this reduces to the classical notion of an étale morphism.

    \item For a non-archimedean field $K$, the canonical morphism $K[\underline{X}]\rightarrow K\lbrace \underline{X} \rbrace$ is good étale.

\end{enumerate}
\end{exs}

\begin{prop}\label{id0}
Let $A$ and $B$ be two adic rings and $\varphi: A \rightarrow B$ a morphism. If $\varphi$ is good étale, then it is topologically étale.
\end{prop}

\begin{proof}
Let us consider a commutative diagram of adic rings 
\[
\xymatrix{ A \ar[d] \ar[r] & C \ar@{->>}[d] \\ B \ar[r] & C/\mathfrak{c} 
}
\]
with $C$ complete and $\mathfrak{c}$ nilpotent. For rings of definition $A_0$ and $B_0$ of $A$ and $B$ satisfying the requirements of the definition, we deduce the following diagram 
\[
\xymatrix{ A_0 \ar[d] \ar[r] & C \ar@{->>}[d] \\ B_0 \ar[r] & C/\mathfrak{c}. 
}
\]
By hypothesis, the exists a unique lifting $B_0 \rightarrow C$ that can be extended in a unique way to a map $B \rightarrow C$ using the fact that $B=A \otimes_{A_0} B_0$. Hence $\varphi$ is formally étale.

Moreover, since $A_0 \rightarrow B_0$ is topologically of finite presentation and the canonical inclusion $A_0 \rightarrow A$ is adic by Proposition \ref{moreladique}, $\varphi : A \rightarrow A \otimes_{A_0} B_0=B$ is topologically of finite presentation by Lemma \ref{extensionf}. Thus $\varphi$ is topologically étale.
\end{proof}

\begin{defn}
Let $A$ be a Huber ring. An endomorphism $\sigma$ of $A$ is said to be \emph{infinitesimal} over a subset $V$ of $A$ if there exists a topologically nilpotent subset $U$ of $A$ such that $(\sigma-\mathrm{Id})(V) \subset U$ and $\sigma(V) \subset V$.
\end{defn}

\begin{exam}
Let \( A \) be a Huber ring, and let \( \lVert \cdot \rVert \) denote its submultiplicative semi-norm. Let \( A_0 \) be a ring of definition. For an element \( q \in A \) such that \( \lVert q - 1 \rVert < 1 \), the map
\[
A[X] \rightarrow A[X], \quad X \mapsto qX
\]
is infinitesimal over \( A_0[X] \).
\end{exam}

\begin{prop}\label{releveinfi}
Let \( B \) be a complete Huber ring and $u:A \rightarrow B $ be a good étale morphism of Huber rings with \( (u_0, A_0, B_0) \) as a good model. If \( \sigma_A \) is an endomorphism of \( A \) that is infinitesimal over \( A_0 \), then \( \sigma_A \) can be uniquely extended to an endomorphism \( \sigma_B \) of \( B \), which is also infinitesimal over \( B_0 \).
\end{prop}

\begin{proof}
Let \( J \) denote the ideal of \( A_0 \) generated by \( (\sigma_A - \mathrm{Id})(A_0) \). The ideal \( JB_0 \) is topologically nilpotent in \( B_0 \). To obtain the desired extension, consider the following commutative diagram
\[
\xymatrix{
A_0 \ar[r]^-{u \circ \sigma_A} \ar[d]_-u & B_0 \ar[d] \\
B_0 \ar[r] & B_0/JB_0.
}
\]
Since the morphism \(u: A \rightarrow B \) is good étale, there exists a unique lifting \( B_0 \rightarrow B_0 \). By extending scalars, this morphism can be uniquely extended to an endomorphism of \( B \) that satisfies the desired properties.
\end{proof}

\begin{defn}
Let \( R \) be a Huber ring, and let \( A \) be a Huber \( R \)-algebra. Elements \( x_1, \ldots, x_d \in A \) are called \emph{good étale coordinates} if there exists a good étale morphism
\[
\varphi: R[X_1, \ldots, X_d] \rightarrow A, \quad X_i \mapsto x_i
\]
which admits a good model of the form
\[
\varphi_0: R_0[X_1, \ldots, X_d] \rightarrow A_0.
\]
We call a triple $(\varphi_0, R_0, A_0)$ as above a \emph{good model} of the good étale coordinates $x_1,\ldots,x_d \in A$.
\end{defn}

\begin{prop}
Let \( R \) be an adic ring, and let \( A \) and \( B \) be \( R \)-adic algebras that are also adic rings. Suppose \( A \) and \( B \) are endowed with good étale coordinates \( x_1, \ldots, x_d \) and \( y_1, \ldots, y_e \), respectively. Then the elements \( x_1 \otimes 1, \ldots, x_d \otimes 1, 1 \otimes y_1, \ldots, 1 \otimes y_e \) form good étale coordinates over \( A \otimes_R B \).
\end{prop}

\begin{proof}
This is a consequence of the Proposition \ref{tenseur}.
\end{proof}

\section{Twisted principal parts in several variables and coordinates}\label{Twisted principal parts}
\begin{defn}\label{defor}
Let $R$ be a Huber ring, $A$ an $R$-adic algebra and $d\in \mathbb{N}$. The $R$-algebra $A$ is said to be \emph{twisted of order $d$} if it is endowed with a sequence of $R$-linear continuous endomorphisms $\underline{\sigma}=\left(\sigma_{1},\ldots,\sigma_{d} \right)$ that commute with each other.
\end{defn}

Let \( P_A = A \otimes_R A \). We extend \( \sigma_{i} \) to \( P_A \) by defining \( \sigma_{i}(a \otimes b) = \sigma_{i}(a) \otimes b \). The pair consisting of the ring and the set of endomorphisms will be denoted as \( (A, \underline{\sigma}) \).

\begin{exs}\label{example}
Let $R$ be a Huber ring and $A$ be an $R$-adic algebra. The following are examples of twisted algebras:
\begin{enumerate}
    \item Let \( h_1, \ldots, h_d \) be elements of \( A \). The ring \( A[\underline{X}] \) with \( \underline{\sigma} = \left( \sigma_1, \ldots, \sigma_d \right) \), defined by \( \sigma_i(X_j) = X_j + \delta_{i,j} h_i \), is a twisted \( R \)-algebra of order \( d \). This corresponds to the situation of finite differences.

    \item Let $l \in \mathbb{N}$. The ring $A[\underline{X}]$ with $\underline{\sigma}=\left(\sigma_{1},\ldots,\sigma_{d} \right)$, defined by 
    \[
\forall a \in A, \forall i,j=1,\ldots,d, ~\sigma_i(aX_j)=aX_j^{l^{\delta_{i,j}}},
    \] is a twisted $R$-algebra of order $d$. This is the case of Malher's differences.
   \item In the case of $d$ variables, \( A = R[X_1, \ldots, X_d] \), we can consider the case where \( \sigma_i(X_j) = q_i^{\delta_{i,j}} X_j \) for \( q_1, \ldots,  q_d \in R \).

\end{enumerate}
\end{exs}

\begin{defn}
Let \( R \) be a Huber ring, and let \( (A, \underline{\sigma}) \) and \( (B, \underline{\tau}) \) be twisted \( R \)-adic algebras of order \( d \). A morphism of twisted Huber \( R \)-algebras of order \( d \), $u: (A, \underline{\sigma})\rightarrow (B, \underline{\tau})$, is a morphism of Huber \( R \)-algebras such that
\[
\forall i = 1, \ldots, d, \quad \tau_i \circ u = u \circ \sigma_i.
\]
\end{defn}

In what follows, we consider a Huber ring \( R \) and a twisted \( R \)-adic algebra \( (A, \underline{\sigma}) \) of order \( d \). Additionally, we consider some elements \( \underline{x} = (x_1, \ldots, x_d) \) of \( A \), on which various conditions will later be imposed. For convenience, we will use the same notation \( \sigma_i \) for the corresponding endomorphisms on the ring of polynomials $A[\underline{\xi}]:=A[\xi_1,\ldots, \xi_d]$ defined by
\[
\forall i,j \in \lbrace 1,\ldots,d \rbrace,~\sigma_i(\xi_j)= 
\begin{cases}
    \xi_i+x_i-\sigma_i(x_i),& \text{if }  j=i,\\
    \xi_j,              & \text{otherwise.}
\end{cases}
\]

\begin{defn} \label{module of twisted differential parts}
For $n \in \N$, the $A$-module $P_{A, (n)_{\underline{\sigma}, \underline{x}}}$ of \emph{twisted principal parts of order $n$} is defined by
\[
P_{A, (n)_{\underline{\sigma},\underline{x}}}:=A[\underline{\xi}] \Big/ \left(\underline{\xi}^{(\underline{k})_{\underline{\sigma},\underline{x}}} \text{ such that } |\underline{k}|=n+1\right),
\]
where for  $\underline{k} \in \mathbb{N}^d$
\[
\underline{\xi}^{(\underline{k})_{\underline{\sigma},\underline{x}}}:=\xi_1^{(k_1)_{\sigma_1}}\ldots\xi_d^{(k_d)_{\sigma_d}}~  \text{ and }~  \xi_i^{(k_i)_{\sigma_i}}:=\xi_i \sigma_i(\xi_i) \ldots \sigma_i^{k_i-1} (\xi_i).
\]
\end{defn}

\begin{rems}
\begin{enumerate}
    \item Following the definition, we have
\begin{equation}
\underline{\xi}^{(\underline{k})_{\underline{\sigma},\underline{x}}} = \prod_{i=1}^d \prod_{j=0}^{k_i-1} \left(\xi_i+x_i-\sigma_i^j(x_i)\right). \label{twist}
\end{equation}
\item In the case where $d=1$, this is the module specified by Le Stum and Quirós in \cite[Definition 1.5]{LSQ3}.
\item As noted in the Examples \ref{huberexs}, the ring of polynomials \( A[\underline{\xi}] \) can be given a structure of Huber ring, inducing the topology on \( A \). The ring \( P_{A, (n)_{\underline{\sigma}, \underline{x}}} \), when equipped with the quotient topology, also becomes a Huber ring. Henceforth, this topology will be assumed on \( P_{A, (n)_{\underline{\sigma}, \underline{x}}} \).
\end{enumerate}
\end{rems}

\begin{exam}
When $A=R\left[X_1,X_2\right]$, and $\sigma_i(X_j)=q_i^{\delta_{i,j}}X_j$ for $i,j=1,2$, where $q_1$ and $q_2$ are elements of $R$, we obtain the following description
\[
P_{A, (n)_{\underline{\sigma},\lbrace X_1, X_2 \rbrace }}=A[\xi_1,\xi_2] \Big/ \left( \prod_{i=1}^2 \prod_{j=0}^{k_i-1} (\xi_i+(1-q_i^j)X_i) \text{ such that } k_1+k_2=n+1\right).
\]
\end{exam}

\begin{prop}\label{surjection}
For every natural integer $n$, the composed morphism
\[ 
A[\underline{\xi}]_{\leq n} \hookrightarrow A[\underline{\xi}]\twoheadrightarrow P_{A, (n)_{\underline{\sigma}, \underline{x}}}
\]
is an isomorphism.
\end{prop}

\begin{proof}
Consider the morphism of $A$-modules
\begin{equation}\label{defoxi}
\begin{aligned}
 A[\underline{\xi}] &\rightarrow A[\underline{\xi}] \\
\underline{\xi}^{\underline{k}} &\mapsto \underline{\xi}^{(\underline{k})_{\underline{\sigma},\underline{x}}}.
\end{aligned}
\end{equation}
Formula (\ref{twist}) ensures that for all $\underline{k} \in \mathbb{N}^d$, the term $\underline{\xi}^{(\underline{k})_{\underline{\sigma},\underline{x}}}$ is of degree $|\underline{k}|$. Hence, the morphism (\ref{defoxi}) preserves the degree and induces for every $n \in \N$ a morphism 
\[
A[\underline{\xi}]_{\leq n} \rightarrow A[\underline{\xi}]_{\leq n},
\]
where $A[\underline{\xi}]_{\leq n}$ designates the set of polynomials of degree less than or equal to $n$. 
For $n=0$, this morphism is an isomorphism. By induction, assume that it is an isomorphism for some natural integer $n$. It induces a morphism
\[
A[\underline{\xi}]_{\leq n+1}/A[\underline{\xi}]_{\leq n} \rightarrow A[\underline{\xi}]_{\leq n+1}/A[\underline{\xi}]_{\leq n}.
\]
This morphism is an isomorphism. Indeed, given $\underline{k}=(k_1,\ldots,k_d) \in \mathbb{N}^d$ such that $|\underline{k}|=n+1$ by formula (\ref{twist}) we have $\underline{\xi}^{(\underline{k})_{\underline{\sigma},\underline{x}}} \equiv \xi_1^{k_1}\ldots \xi_d^{k_d} \text{ mod } A[\underline{\xi}]_{\leq n}  $.
The following diagram
\[
\xymatrix{
0 \ar[r] & A[\underline{\xi}]_{\leq n} \ar[d] \ar[r] & A[\underline{\xi}]_{\leq n+1} \ar[d] \ar[r] &A[\underline{\xi}]_{\leq n+1}/A[\underline{\xi}]_{\leq n} \ar[d] \ar[r] & 0 \\
0 \ar[r] & A[\underline{\xi}]_{\leq n}  \ar[r] & A[\underline{\xi}]_{\leq n+1}  \ar[r] & A[\underline{\xi}]_{\leq n+1}/A[\underline{\xi}]_{\leq n}  \ar[r] & 0
}
\]
allows us to conclude, using the Five lemma, that the morphism 
\[
A[\underline{\xi}]_{\leq n+1} \rightarrow A[\underline{\xi}]_{\leq n+1}
\]
is an isomorphism. Therefore, the morphism in the statement of the proposition is an isomorphism.
\end{proof}

\begin{rem}
The module $P_{A, (n)_{\underline{\sigma}, \underline{x}}}$ is a free $A$-module of basis $\left\lbrace\underline{\xi}^{(\underline{k})_{\underline{\sigma},\underline{x}}}\right\rbrace_{|\underline{k}|\leq n}.$
\end{rem}

\begin{prop}\label{u}
Let $u: (A, \underline{\sigma})\rightarrow (B, \underline{\tau})$ be a morphism of twisted $R$-adic algebras of order $d$ then, 
\[
B\otimes_A P_{A, (n)_{\underline{\sigma}, \underline{x}}}\simeq P_{B, (n)_{\underline{\tau}, \underline{u(x)}}}.
\]
\end{prop}

\begin{proof}
By definition,
\[
\begin{aligned}
B\otimes_A P_{A, (n)_{\underline{\sigma}, \underline{x}}}&=B\otimes_A A[\underline{\xi}] \Big/ \left(\underline{\xi}^{(\underline{k})_{\underline{\sigma},\underline{x}}} \text{ such that } |\underline{k}|=n+1\right)	\\
 &\simeq B[\underline{\xi}] \Big/ \left(\underline{\xi}^{(\underline{k})_{\underline{\tau},\underline{u(x)}}} \text{ such that } |\underline{k}|=n+1\right)= P_{B, (n)_{\underline{\tau}, \underline{u(x)}}}. \qedhere
\end{aligned}
\]
\end{proof}

\begin{defn}\label{coor}
The elements $\underline{x}$ are called \emph{$\underline{\sigma}$-coordinates} over $A$ if, for every $n \in \N$, there exists a unique adic morphism of $R$-algebras
\[
\begin{aligned}
\Theta_{A,(n)_{\underline{\sigma},\underline{x}}} \colon A &\to P_{A, (n)_{\underline{\sigma},\underline{x}}} \\
x_i  &\mapsto x_i+\xi_i
\end{aligned}
\]
such that the composition of the projection on $A$ with $\Theta_{A,(n)_{\underline{\sigma},\underline{x}}}$ is the identity. 
The morphism $\Theta_{A,(n)_{\underline{\sigma},\underline{x}}}$ is called the \emph{$n$-th Taylor morphism} of $A$ with respect to $\underline{\sigma}$ and $\underline{x}$.
We say that the coordinates $\underline{x}=(x_1,\ldots,x_d)$ are 
\emph{$\underline{q}$-coordinates} if there exists $q_1,\ldots,q_d \in R$ such that $\underline{x}$ are $\underline{\sigma}$-coordinates for the $\underline{\sigma}$ defined by
\[
\forall i,j=1, \ldots, d, \quad\sigma_i(x_j) = q_i^{\delta_{i,j}}x_j.
\]
For notational simplicity and to emphasize the link with elements $q_1,\ldots,q_d$, we will henceforth replace the subscript $\underline{\sigma}$ by $\underline{q}$ in the case of $\underline{q}$-coordinates. In the case $d=1$, this is the situation described in \cite{ADV04}.
\end{defn}

\begin{exam}
Let $A$ be a complete adic ring and $\sigma_1=\ldots=\sigma_d=\mathrm{Id}_A$. Suppose there exists a formally étale morphism
\[
R[\underline{X}] \rightarrow A, ~X_i \mapsto x_i,
\]
then, $x_1,\ldots,x_d$ are $\underline{\sigma}$-coordinates over $A$. Indeed, for every natural integer $n$ the following diagram commutes:
\[
\xymatrix{
R[\underline{X}] \ar[rr]^{X_i \mapsto x_i+\xi_i} \ar[d]_{X_i \mapsto x_i} & ~ & P_{A, (n)_{\underline{\sigma}, \underline{x}}} \ar[d] \\
A \ar[rr]  & ~ &P_{A, (n)_{\underline{\sigma}, \underline{x}}}/(\xi_1,\ldots,\xi_d)
}
\]
Since we are in the case where $\sigma_1=\ldots=\sigma_d=\mathrm{Id}_A$, the elements $\xi_i$ are nilpotent elements of $P_{A, (n)_{\underline{\sigma}, \underline{x}}}$. Given that the left vertical arrow is formally étale, there exists a unique lift $A \rightarrow P_{A, (n)_{\underline{\sigma}, \underline{x}}}$ that satisfies the desired property.
\end{exam}

Let \( x_1, \ldots, x_d \) be \( \underline{\sigma} \)-coordinates over \( A \). For \( n \in \mathbb{N} \), consider the extension of \( \Theta_{A,(n)_{\underline{\sigma},\underline{x}}} \) to \( P_A \) by \( A \)-linearity
\[
\begin{aligned}
\tilde{\Theta}_{A, (n)_{\underline{\sigma}, \underline{x}}}: & P_A \rightarrow P_{A, (n)_{\underline{\sigma}, \underline{x}}} \\
& a \otimes b \mapsto a \, \Theta_{A, (n)_{\underline{\sigma}, \underline{x}}}(b).
\end{aligned}
\]
For $i=1,\ldots,d,$
\[
\tilde{\Theta}_{A,(n)_{\underline{\sigma},\underline{x}}}(1\otimes x_i - x_i \otimes 1)= \xi_i.
\]
Proposition \ref{surjection} guarantees that this morphism is surjective. We define
\[
I_{A}^{(n+1)_{\underline{\sigma}, \underline{x}}} := \ker\left( \tilde{\Theta}_{A, (n)_{\underline{\sigma}, \underline{x}}} \right).
\]
By the First Isomorphism Theorem, we have
\[
P_A / I_{A}^{(n+1)_{\underline{\sigma}, \underline{x}}} \simeq P_{A, (n)_{\underline{\sigma}, \underline{x}}}.
\]

\begin{rem}
For every $n \in \N$, there exists, by factorization, a morphism 
\[
\phi_n: P_{A,(n)_{\underline{\sigma},\underline{x}}} \rightarrow P_{A,(n-1)_{\underline{\sigma},\underline{x}}}
\]
that makes the diagram commute
\[
\xymatrix{
A[\underline{\xi}] \ar@{->>}[r] \ar@{->>}[d] &  P_{A ,(n-1)_{\underline{\sigma},\underline{x}}}. \\
 P_{A ,(n)_{\underline{\sigma},\underline{x}}} \ar[ur]_-{\phi_n} & ~ 
}
\]
Using the uniqueness of
 $\tilde{\Theta}_{A,(n-1)_{\underline{\sigma},\underline{x}}}$ we can easily check that the diagram below is commutative:
\[
\xymatrix{
0 \ar[r] & I_{A}^{(n)_{\underline{\sigma},\underline{x}}} \ar@{^{(}->}[r] & P_A \ar[rr]^-{\tilde{\Theta}_{A,(n-1)_{\underline{\sigma}}}} & & P_{A,(n-1)_{\underline{\sigma},\underline{x}}} \ar[r]  & 0 \\
0 \ar[r] & I_{A}^{(n+1)_{\underline{\sigma},\underline{x}}} \ar[u] \ar@{^{(}->}[r] & P_A \ar@{=}[u] \ar[rr]^-{\tilde{\Theta}_{A,(n)_{\underline{\sigma}}}} &  & P_{A,(n)_{\underline{\sigma},\underline{x}}} \ar[u]_{\phi_n} \ar[r] & 0.
}
\]
\end{rem}

\begin{lem}\label{inclusionideaux}
Let  $\underline{x}=(x_1,\ldots,x_d)$ be $\underline{\sigma}$-coordinates such that for $i=1,\ldots,d, ~\sigma_i(x_i) \in R[x_i]$. For every $n,m \in \N$, there exists a unique morphism of algebras
\[
\delta_{n,m}\colon P_{A, (n+m)_{\underline{\sigma},\underline{x}}}  \rightarrow P_{A, (n)_{\underline{\sigma},\underline{x}}} \otimes_A' P_{A, (m)_{\underline{\sigma},\underline{x}}}, ~\xi_i  \mapsto \xi_i \otimes 1 + 1 \otimes \xi_i,
\]
where $\otimes_A'$ means that the action on the left is given by $\Theta_{A,(n)_{\underline{\sigma},\underline{x}}}$.
\end{lem}

\begin{proof}
There exists a unique morphism of $A$-algebras
\[
\delta: A[\underline{\xi} ] \rightarrow  P_{A, (n)_{\underline{\sigma},\underline{x}}} \otimes_A' P_{A, (m)_{\underline{\sigma},\underline{x}}}, ~\xi_i \mapsto 1\otimes \xi_i + \xi_i \otimes 1.
\]
For $i=1,\ldots,d$ and $l \in \N$ denote by $\mathfrak{a}_{i,l}$ the free sub-module with basis $\left\lbrace \xi_i^{(k)_{\underline{\sigma},\underline{x}}}, k \geq l \right\rbrace$ and denote by $\bar{\mathfrak{a}}_{i,l}$ the image of $\mathfrak{a}_{i,l}$ in $P_{A,(n)_{\underline{\sigma}, \underline{x}}}$ or $P_{A,(m)_{\underline{\sigma}, \underline{x}}}$.
First we prove the inclusion
\begin{equation}\label{1}    
\delta (\mathfrak{a}_{i,l}) \subset \sum\limits_{j=0}^l \bar{\mathfrak{a}}_{i,j} \otimes_A' \bar{\mathfrak{a}}_{i,l-j}.
\end{equation}
Since the right hand side is decreasing with respect to $l$, it suffices to prove that
\begin{equation}\label{2}
   \delta (\xi_i^{(l)_{\underline{\sigma}, \underline{x}}}) \subset \sum\limits_{j=0}^l \bar{\mathfrak{a}}_{i,j} \otimes_A' \bar{\mathfrak{a}}_{i,l-j}. 
\end{equation}
We prove it by induction on $l$. The case $l=0$ is trivial. Suppose (\ref{2}) holds for $l$. To prove (\ref{2}) with $l$ replaced by $l+1$, we prove that 
\begin{equation}\label{eq:3}
    \delta\left(\sigma_i^l(\xi_i)\right)\in  P_{A, (n)_{\underline{\sigma},\underline{x}}} \otimes_A' (\sigma_i^{j}(\xi_i)) + (\sigma_i^{l-j}(\xi_i)) \otimes_A' P_{A, (m)_{\underline{\sigma},\underline{x}}}
\end{equation}
holds for any $0 \leq j \leq l.$ Since $\sigma_i(x_i) \in R[x_i]$, we may assume $A = R[x_i]$ to prove (\ref{eq:3}). In this case, there exists an isomorphism $\iota : A[\underline{\xi}]=A[\xi_i] \rightarrow P_A$ defined by $\xi_i \mapsto 1 \otimes x_i - x_i \otimes 1$. If we define the action of $\sigma_i$ on $P_A =A \otimes_R A$ by $\sigma_i \otimes \text{Id}$, the isomorphism $\iota$ is compatible with the action of $\sigma_i$. Moreover, if we define the morphisms 
$\delta_A \colon A[\xi_i] \to A[\xi_i] \otimes_A' A[\xi_i]$ and 
$\delta_P \colon P_A \to P_A \otimes_A' P_A$ 
by $\delta_A(\xi_i) = \xi_i \otimes 1 + 1 \otimes \xi_i$ and 
$\delta_P(a \otimes b) = (a \otimes 1) \otimes (1 \otimes b)$ respectively,
the equality $(\iota \otimes \iota) \circ \delta_A = \delta_P \circ \iota$ holds.
Here $\otimes_A'$ means that the $A$-module structures on the left factor of
$P_A$ and $A[\xi_i] \simeq P_A$ are defined by the right $A$-action. By \cite[Lemma 3.1]{LSQ3}, we have the equality
\begin{equation*}
\delta_P(\sigma_i^l(1\otimes x_i - x_i \otimes 1))=1\otimes'\sigma_i^j(1\otimes x_i - x_i \otimes 1)+\sigma_i^{l-j}(1\otimes \sigma_i^j(x_i)-\sigma_i^j(x_i) \otimes 1)\otimes'1
\end{equation*}
in $P_A \otimes_A'P_A$. If we put $\eta_i:=\iota^{-1}(1\otimes \sigma_i^j(x_i)-\sigma_i^j(x_i) \otimes 1) \in A[\xi_i]$, the above equality implies the equality
\begin{equation}\label{4}
\delta_A(\sigma_i^l(\xi_i))=1\otimes'\sigma_i^j(\xi_i)+\sigma_i^{l-j}(\eta_i)\otimes'1
\end{equation}
in $A[\xi_i]$. Moreover, noting that the projection $A[\xi_i] \rightarrow A$ is compatible with the multiplication map $P_A = A \otimes_RA \rightarrow A$ via $\iota$, we have
\[
\eta_i=\iota^{-1}(1\otimes \sigma_i^j(x_i)-\sigma_i^j(x_i) \otimes 1)  \in \iota^{-1}(\text{Ker}(P_A \rightarrow A)) = \text{Ker}(A[\xi_i] \rightarrow A) = (\xi_i).
\]
Because the map $\delta_A$ is compatible with the map $\delta$ via the natural
projections
\[
A[\xi_i] \otimes_{A}' A[\xi_i] \twoheadrightarrow P_{A,(n)_{\underline{\sigma},\underline{x}}} \otimes_A' P_{A,(m)_{\underline{\sigma},\underline{x}}},
\] 
(\ref{4}) implies (\ref{eq:3}). Then, by induction hypothesis,
\[
\begin{aligned}
\delta(\xi_i^{(l+1)_{\underline{\sigma},\underline{x}}})&=\delta(\xi_i^{(l)_{\underline{\sigma},\underline{x}}}) \delta(\sigma_i^l(\xi_i)) \\
& \in  \sum\limits_{j=0}^l \left(\bar{\mathfrak{a}}_{i,j} \otimes_A' \bar{\mathfrak{a}}_{i,l-j}\right)\left(P_{A, (n)_{\underline{\sigma},\underline{x}}} \otimes_A' (\sigma_i^{l-j}(\xi_i)) + (\sigma_i^j(\xi_i)) \otimes_A' P_{A, (m)_{\underline{\sigma},\underline{x}}} \right) \\
&=\sum\limits_{j=0}^l \left(  \bar{\mathfrak{a}}_{i,j} \otimes_A' \bar{\mathfrak{a}}_{i,l+1-j}  +  \bar{\mathfrak{a}}_{i,j+1} \otimes_A' \bar{\mathfrak{a}}_{i,l-j} \right)  = \sum\limits_{j=0}^{l+1}  \bar{\mathfrak{a}}_{i,j} \otimes_A' \bar{\mathfrak{a}}_{i,l+1-j}.
\end{aligned}
\]
So we have proved (\ref{2}), hence (\ref{1}).

Now let $\mathfrak{a}_l$ denote the free $A$-sub-module of $A[\underline{\xi}]$ with basis $\left\lbrace \underline{\xi}^{(\underline{k})_{\underline{\sigma},\underline{x}}}, |\underline{k}|\geq l \right\rbrace$ and denote by $\bar{\mathfrak{a}}_{l}$ the image of $\mathfrak{a}_{l}$ in $P_{A,(n)_{\underline{\sigma}, \underline{x}}}$ or $P_{A,(m)_{\underline{\sigma}, \underline{x}}}$.
 We will show that
\[
\delta (\mathfrak{a}_{l}) \subset \sum\limits_{j=0}^l \bar{\mathfrak{a}}_{j} \otimes_A' \bar{\mathfrak{a}}_{l-j}.
\]
The equality $\mathfrak{a}_l=\sum_{|\underline{k}|=n} \prod_{i=1}^d \mathfrak{a}_{i,k_i}$ gives
\begin{align*}
\delta (\mathfrak{a}_l) 
&= \delta \left(\sum_{|\underline{k}|=l} \prod_{i=1}^d \mathfrak{a}_{i,k_i} \right)\\
& \subset \sum_{|\underline{k}|=l} \prod_{i=1}^d \delta (\mathfrak{a}_{i,k_i}) \\
& \subset \sum_{|\underline{k}|=l} \prod_{i=1}^d \sum\limits_{j=0}^{k_i} \bar{\mathfrak{a}}_{k_i,j} \otimes_A' \bar{\mathfrak{a}}_{i,k_i-j} \\
& \subset \sum_{|\underline{k}|=l} \sum_{|\underline{j}| \leq |\underline{k}|} \prod_{i=1}^d (\bar{\mathfrak{a}}_{i,j_i} \otimes_A' \bar{\mathfrak{a}}_{i,k_i-j_i} ) \\
& \subset \sum_{|\underline{k}|=l} \sum_{|\underline{j}| \leq |\underline{k}|} \left( \prod_{i=1}^d \bar{\mathfrak{a}}_{i,j_i} \otimes_A' \prod_{i=1}^d \bar{\mathfrak{a}}_{i,k_i-j_i} \right) \\
& \subset \sum_{|\underline{k}|=l} \sum_{|\underline{j}| \leq |\underline{k}|} \left( \bar{\mathfrak{a}}_{|\underline{j}|}\otimes_A' \bar{\mathfrak{a}}_{|\underline{k-j}|} \right) \\
& \subset  \sum_{j=0}^l \bar{\mathfrak{a}}_j \otimes_A' \bar{\mathfrak{a}}_{l-j}.
\end{align*}
Apply this inclusion to the case $l=n+m+1$:
For $0 \leq i \leq n+m+1$, either $i>n$, in which case $\bar{\mathfrak{a}}_i \subset \bar{\mathfrak{a}}_{n+1}$, or $j \leq n$, in which case $m+n+1-j >m$ and so $\bar{\mathfrak{a}}_{n+m+1-j} \subset \mathfrak{a}_{m+1}$. Hence
\[
\delta (\mathfrak{a}_{n+m+1} )\subset \bar{\mathfrak{a}}_{n+1} \otimes_A' P_{A,(m)_{\underline{\sigma}, \underline{x}}} + P_{A,(n)_{\underline{\sigma}, \underline{x}}}\otimes_A'\bar{\mathfrak{a}}_{m+1} =0.
\]
Hence, passing to the quotient, we obtain the desired map.\qedhere
\end{proof}

\begin{rem}
The map $\delta_{n,m}$ is automatically continuous. 
\end{rem}

We present the hypotheses that will be utilized throughout the rest of this section. Let \( R \) be a Huber ring, and let \( (A, \underline{\sigma}) \) denote a twisted \( R \)-adic algebra of order \( d \) (see Definition \ref{defor}). For notational convenience, we will omit the explicit dependence on the coordinates \( \underline{x} \) and use this notation to represent a tuple \( (x_1, \ldots, x_d) \) of elements in \( A \).

\begin{defn}\label{goodcoor}
The elements $\underline{x}$ are said to be \emph{good $\underline{\sigma}$-coordinates} if they are good \'{e}tale coordinates with a good model $(\varphi_0, R_0, A_0)$ such that $\sigma_i(A_0) \subset A_0$ for $i = 1, \ldots, d$ and they are $\underline{\sigma}$-coordinates of the twisted $R_0$-adic algebra $(A_0,\underline{\sigma})$.

By definition, we have a morphism of $R_0$-algebras
\[
\Theta_{A_0,(n)_{\underline{\sigma}}} : A_0 \to P_{A_0,(n)_{\underline{\sigma}}}; 
\qquad x_i \mapsto x_i + \xi_i
\]
such that the composition of the projection to $A_0$ with $\Theta_{A_0,(n)_{\underline{\sigma}}}$ is the identity. We denote the morphism
\[
\mathrm{Id} \otimes \Theta_{A_0,(n)_{\underline{\sigma}}} : 
A = R \otimes_{R_0} A_0 \longrightarrow 
R \otimes_{R_0} P_{A_0,(n)_{\underline{\sigma}}} 
= P_{A,(n)_{\underline{\sigma}}}
\]
by $\Theta_{A,(n)_{\underline{\sigma}}}$.
\end{defn}

\begin{prop}\label{bonsigmacord}
Assume that \( A \) is complete. Let \( \underline{x} \) be good étale coordinates over \( A \), and let \( (\varphi_0, R_0, A_0) \) denote the associated good model. If for all \( i = 1, \ldots, d \), the morphism \( \sigma_i \) is infinitesimal over \( A_0 \), then \( \underline{x} \) are good \( \underline{\sigma} \)-coordinates over \( A \).
\end{prop}

\begin{proof}
By definition, the morphism $\varphi_0 : R[X] \to A_0$ is topologically \'{e}tale. For each $i=1,\ldots,d$, the ideal $J_i$ of $A_0$ generated by $(\sigma_i - \mathrm{Id})(A_0)$ is topologically nilpotent. Hence $J:= \sum_{i=1}^d J_i P_{A_0,(n)_{\underline{\sigma}}}$ is a topologically nilpotent ideal of $P_{A_0,(n)_{\underline{\sigma}}}$. Consider the following commutative diagram:
\[
\xymatrix{
R_0[\underline{X}] \ar[rr]^{X_i \mapsto x_i + \xi_i} \ar[d] 
&  &P_{A_0,(n)_{\underline{\sigma}}} \ar[d] \\
A_0 \ar[rr] 
&  & A_0
}
\]
where the right vertical arrow is the projection. Note that the kernel $(\xi_1,\ldots,\xi_d)$ of this projection is topologically nilpotent. Indeed, for each $i=1,\ldots,d$, the equality $\xi_i^{(n+1)_{\underline{\sigma}}}=0$ implies that $\xi_i^{n+1} \in J$. So the above diagram ensures that there exists a unique morphism $A_0 \to P_{A_0,(n)_{\underline{\sigma}}}$ such that the composition of the projection to $A_0$ with it is the identity. So $\underline{x}$ are good $\underline{\sigma}$-coordinates over $A$.
\end{proof}

\begin{prop}\label{need?}
Let $u: A \rightarrow B$ be a good étale morphism of complete $R$-adic algebras and let $\underline{x}$ be good $\underline{\sigma}$-coordinates, with $(\varphi_0,R_0,A_0)$ the associated good model. Assume that there exists a ring of definition $B_0$ of $B$ such that $(u_{|A_0},A_0,B_0)$ forms a good model for $u$. If for each $i=1, \ldots, d,$ the endomorphism $\sigma_i$ is infinitesimal over $A_0$, then, it extends uniquely to $B$ as an endomorphism $\sigma_{i,B}$. Furthermore, $u(x_1),\ldots,u(x_d)$ are good $\underline{\sigma_B}$-coordinates over $B$.
\end{prop}

\begin{proof}
The result can be easily deduced from the Propositions \ref{composition}, \ref{releveinfi} and \ref{bonsigmacord}.
\end{proof}

\begin{rem}
    If $\underline{x}$ are good $\underline{\sigma}$-coordinates of an $R$-adic algebra $A$ with a good model $(\varphi_0,R_0,A_0)$, they are $\underline{\sigma}$-coordinates of an $R_0$-adic algebra $A_0$ by definition, but we do not know if they are $\underline{\sigma}$-coordinates of an $R$-adic algebra $A$. In the following, we will write ``$\underline{x}$ are (good) $\underline{\sigma}$-coordinates of an $R$-adic algebra $A$" if $\underline{x}$ are $\underline{\sigma}$-coordinates of an $R$-adic algebra $A$ or they are good $\underline{\sigma}$-coordinates of an $R$-adic algebra $A$.
\end{rem}

\section{Twisted Connections}\label{Derivations}
We maintain the hypothesis introduced in the previous section: let \( R \) be a Huber ring, and let \( (A, \underline{\sigma}) \) be a twisted \( R \)-adic algebra of order \( d \) (Definition \ref{defor}). In the following discussion, we further assume the existence of some elements \( \underline{x} = (x_1, \ldots, x_d) \) in \( A \), which are (good) \( \underline{\sigma} \)-coordinates (Definitions \ref{coor}, \ref{goodcoor}).

\begin{defn}\label{twisted differential forms}
The $A$-module of \emph{twisted differential forms} of $A$ over $R$ is defined as
\[
\Omega^1_{A, \underline{\sigma}}:=I_{A}^{(1)_{\underline{\sigma}}}/I_{A}^{(2)_{\underline{\sigma}}}.
\]
\end{defn}

\begin{rems}
\begin{enumerate}
\item The module \( \Omega^1_{A, \underline{\sigma}} \) is a free module of rank \( d \), with \( (1 \otimes x_i - x_i \otimes 1)_{i=1,\ldots,d} \) serving as a basis. Moreover, for \( i=1, \ldots, r \), the element \( \xi_i \) in \( P_{A,(1)_{\underline{\sigma}}} \) can be identified with the equivalence class of \( 1 \otimes x_i - x_i \otimes 1 \) in \( P_A / I_A^{(2)_{\underline{\sigma}}} \). Indeed, for $n \in \N$, the next diagram is commutative and its rows and columns are exact:
\[
\xymatrix{
& & & 0  \ar[d]& \\
0 \ar[r] & 0 \ar[d] \ar[r] & 0 \ar[r] \ar[d] & \oplus_{|\underline{k}|=n}A\underline{\xi}^{(\underline{k})_{\underline{\sigma},\underline{x}}} \ar[d] \\
0 \ar[r] & I_{A}^{(n+1)_{\underline{\sigma},\underline{x}}} \ar[r]  \ar[d] & P_A \ar[r] \ar[d] & P_{A, (n)_{\underline{\sigma},\underline{x}}} \ar[d] \ar[r] & 0 \\
0 \ar[r] & I_{A}^{(n)_{\underline{\sigma},\underline{x}}} \ar[r]  \ar[d] & P_A \ar[r] \ar[d] & P_{A ,(n-1)_{\underline{\sigma},\underline{x}}} \ar[d] \ar[r] & 0 \\
& I_{A}^{(n)_{\underline{\sigma},\underline{x}}}/I_{A}^{(n+1)_{\underline{\sigma},\underline{x}}} \ar[r] \ar[d] & 0 \ar[r] & 0 \ar[r] & 0. \\
& 0 & & &
}
\]
By the Snake lemma, we obtain $\oplus_{|\underline{k}|=n}A\underline{\xi}^{(\underline{k})_{\underline{\sigma},\underline{x}}} \simeq I_{A}^{(n)_{\underline{\sigma},\underline{x}}}/I_{A}^{(n+1)_{\underline{\sigma},\underline{x}}}$.
\item Using the notations established in the proof of Lemma \ref{inclusionideaux}, we obtain the following:
\[
\Omega^1_{A, \underline{\sigma}}=\mathfrak{a}_1/\mathfrak{a}_2.
\]
\item The isomorphism $P_{A,(1)_{\underline{\sigma}}} \simeq P_A/I_{A}^{(2)_{\underline{\sigma}}}$ allows us to identify $\Omega^1_{A, \underline{\sigma}}$ with an ideal of $P_{A,(1)_{\underline{\sigma}}}$.
\end{enumerate}
\end{rems}

\begin{prop}\label{suite exacte}
The following sequence of $A$-modules,
\[
0 \rightarrow \Omega^1_{A, \underline{\sigma}} \rightarrow P_{A,(1)_{\underline{\sigma}}} \rightarrow A  \rightarrow  0
\]
where the first morphism $\Omega^1_{A, \underline{\sigma}} \rightarrow P_{A,(1)_{\underline{\sigma}}}$ is the inclusion and the morphism $P_{A,(1)_{\underline{\sigma}}} \rightarrow A$ is the canonical projection, is a split exact sequence.
\end{prop}

\begin{proof}
The exactness of the sequence follows directly from its construction. To demonstrate that it is split exact, it suffices to consider the map \( A \rightarrow P_{A}/I_{A}^{(2)_{\underline{\sigma}}} \simeq P_{A,(1)_{\underline{\sigma}}} \), which sends an element \( a \) to \( a \otimes 1 \).
\end{proof}

\begin{prop}\label{extension}
Let \( A \) and \( B \) be two twisted \( R \)-adic algebras of order \( d \), and let \( u: A \rightarrow B \) be a morphism of twisted \( R \)-adic algebras of  order $d$. Suppose that \( u(x_1), \ldots, u(x_d) \) are \( \underline{\sigma_B} \)-coordinates. Then, there exists an isomorphism
\[
B \otimes_A \Omega^1_{A, \underline{\sigma_A}} \simeq \Omega^1_{B, \underline{\sigma_B}}.
\]
\end{prop}

\begin{proof}
By applying the tensor product and using Propositions \ref{u} and \ref{suite exacte}, we obtain the following exact sequence
\[
0 \rightarrow B \otimes_A \Omega^1_{A, \underline{\sigma_A}} \rightarrow P_{B,(1)_{\underline{\sigma_B}}} \rightarrow B \rightarrow 0. \qedhere
\]
\end{proof}

\begin{defn}\label{sigma derivation}
Let $1 \leq i \leq d$. We recall from \cite{SQ2} that a \emph{$\sigma_i$-derivation} $D$ of $A$ is an $R$-linear endomorphism on $A$ that verifies the twisted Leibniz rule
\[
\forall x,y \in A, ~ D(xy)=xD(y)+\sigma_i(y)D(x).
\]
\end{defn}

\begin{rem} \label{defofd}
Denote by \( (e_i)_{i=1,\ldots,d} := ((1 \otimes x_i - x_i \otimes 1)^*)_{i=1,\ldots,d} \) the basis of \( \mathrm{Hom}_A\left(\Omega^1_{A, \underline{\sigma}}, A\right) \) dual to the basis \( (1 \otimes x_i - x_i \otimes 1)_{i=1,\ldots,d} \) of \( \Omega^1_{A, \underline{\sigma}} \). Define the map
\[
\text{d}: A \rightarrow \Omega^1_{A, \underline{\sigma}}, \quad f \mapsto \Theta_{A,(1)_{\underline{\sigma}}}(f) - f.
\]
This map allows us to define the following
\[
\forall i = 1 , \ldots,d; \quad \partial_{\underline{\sigma},i} := e_i \circ \text{d}.
\]

We denote by \( \mathrm{T}_{A,\underline{\sigma}} \) the left $A$-submodule of $\text{Hom}_R(A,A)$ generated by $\partial_{\underline{\sigma},1}, \ldots, \partial_{\underline{\sigma},d}$.  This module is free over \( A \): For any \( a_1, \ldots, a_d \in A \) such that
$\sum_{i=1}^d a_i \partial_{\underline{\sigma},i} = 0,$
evaluating this sum at \( x_1, \ldots, x_d \) gives \( a_1 = \ldots = a_d = 0 \). Also, define \( \mathrm{T}_{A,\underline{\sigma}}\text{-}\mathrm{Mod}(A) \) as the category of \( A \)-modules endowed with $R$-linear actions $\partial_{M,\underline{\sigma},i}:M \rightarrow M$ for $i=1,\ldots,d$ satisfying the following form of twisted Leibniz rule:
\[
\forall a \in A, \forall m \in M, ~ \partial_{M, \underline{\sigma},i}(am)=\partial_{ \underline{\sigma},i}(a)m+\sigma_i(a)\partial_{M, \underline{\sigma},i}(m).
\]

\end {rem}

\begin{lem}\label{formder}
The following formula is satisfied
\[
\forall f,g \in A, ~ \partial_{\underline{\sigma},i}(fg)= f\partial_{\underline{\sigma},i}(g)+(g+(\sigma_i(x_i)-x_i)\partial_{\underline{\sigma},i}(g))\partial_{\underline{\sigma},i}(f).
\]
\end{lem}

\begin{proof}
By the definition of the maps $\partial_{\underline{\sigma},1}, \ldots, \partial_{\underline{\sigma},d}$ we have a ring morphism
\[
\Theta_{A,(1)_{\underline{\sigma}}}:A \rightarrow P_{A, (1)_{\underline{\sigma}}},~ f \mapsto f+\sum\limits^d_{i=1} \partial_{\underline{\sigma},i}(f)\xi_i.
\]
In $P_{A, (1)_{\underline{\sigma}}}$ for $i \neq j$, $\xi_i\xi_j=0$ and $\sigma_i\left(\xi_i\right)\xi_i=0$ which implies
\[
\xi_i(\xi_i+x_i-\sigma_i(x_i))=0.
\]
Thus, we have
\[
\xi_i^2=(\sigma_i(x_i)-x_i)\xi_i.
\]
Now, for $f,g \in A$, on one hand we have
\[\Theta_{A,(1)_{\underline{\sigma}}}(fg)=fg+\sum\limits^d_{i=1} \partial_{\underline{\sigma},i}(fg)\xi_i,
\]
and on the other hand
\[
\Theta_{A,(1)_{\underline{\sigma}}}(fg)=\Theta_{A,(1)_{\underline{\sigma}}}(f)\Theta_{A,(1)_{\underline{\sigma}}}(g),
\]
which gives
\begin{align*}
\Theta_{A,(1)_{\underline{\sigma}}}(fg)&=\left(f+\sum\limits^d_{i=1} \partial_{\underline{\sigma},i}(f)\xi_i \right)\left(g+\sum\limits^d_{i=1} \partial_{\underline{\sigma},i}(g)\xi_i \right) \\
&=fg+g\sum\limits^d_{i=1} \partial_{\underline{\sigma},i}(f)\xi_i + f \sum\limits^d_{i=1} \partial_{\underline{\sigma},i}(g)\xi_i + \sum\limits_{i,j=1}^d  \partial_{\underline{\sigma},i}(f)\partial_{\underline{\sigma},j}(g)\xi_i\xi_j \\
&=fg + \sum\limits^d_{i=1} (f\partial_{\underline{\sigma},i}(g)+g\partial_{\underline{\sigma},i}(f))\xi_i+\sum\limits_{i=1}^d \partial_{\underline{\sigma},i}(g)\partial_{\underline{\sigma},i}(f)(\sigma_i(x_i)-x_i)\xi_i \\
&=fg+\sum\limits_{i=1}^d(f\partial_{\underline{\sigma},i}(g)+(g+(\sigma_i(x_i)-x_i)\partial_{\underline{\sigma},i}(g))\partial_{\underline{\sigma},i}(f))\xi_i.
\end{align*}
By identifying both sides, we derive the desired formula.
\end{proof}

\begin{rem}
For \( 1 \leq i \leq d \), the map \( \partial_{\underline{\sigma},i} \) is automatically continuous, as it is a component of \( \Theta_{A,(1)_{\underline{\sigma}}} \), which is assumed to be continuous by hypothesis.
\end{rem}

Unlike the situation in \cite{GLQ4}, it is not immediately evident that the endomorphism $\partial_{\underline{\sigma},i}$ on $A$ is a $\sigma_i$-derivation. To ensure this, we must impose an additional hypothesis on our coordinates.

\begin{defn}\label{classique}
The (good) $\underline{\sigma}$-coordinates $\underline{x}$ are said to be \emph{classical} if
\[
\forall i=1, \ldots,d;~ \forall f \in A, ~\sigma_i(f)=f+(\sigma_i(x_i)-x_i)\partial_{\underline{\sigma},i}(f).
\]
\end{defn}

\begin{rem}
When $\sigma_i(x_i)-x_i$ is invertible, we recover the formula
\[
\partial_{\underline{\sigma},i}(f)=\frac{\sigma_i(f)-f}{\sigma_i(x_i)-x_i}.
\]
\end{rem}

\begin{thm}\label{derivation}
The (good) $\underline{\sigma}$-coordinates $\underline{x}$ are classical coordinates if and only if for each $i\in \lbrace 1,\ldots,d \rbrace;$ $\partial_{\underline{\sigma},i}$ is a $\sigma_i$-derivation. 
\end{thm}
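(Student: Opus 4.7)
The plan is to compare the product formula proved in Lemma \ref{formder} with the twisted Leibniz rule from Definition \ref{sigma derivation}, and show that the difference collapses exactly when the classical hypothesis holds. Concretely, Lemma \ref{formder} gives
\[
\partial_{\underline{\sigma},i}(fg) = f\partial_{\underline{\sigma},i}(g) + \bigl(g + (\sigma_i(x_i)-x_i)\partial_{\underline{\sigma},i}(g)\bigr)\partial_{\underline{\sigma},i}(f),
\]
while $\partial_{\underline{\sigma},i}$ being a $\sigma_i$-derivation means
\[
\partial_{\underline{\sigma},i}(fg) = f\partial_{\underline{\sigma},i}(g) + \sigma_i(g)\partial_{\underline{\sigma},i}(f).
\]
Subtracting, both statements are equivalent, for every $f,g \in A$ and every $i$, to the identity
\[
\bigl(\sigma_i(g) - g - (\sigma_i(x_i)-x_i)\partial_{\underline{\sigma},i}(g)\bigr)\partial_{\underline{\sigma},i}(f) = 0. \quad (\star)
\]

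For the forward direction, if $\underline{x}$ is classical then by Definition \ref{classique} the first factor in $(\star)$ vanishes identically in $g$, so $(\star)$ holds trivially for all $f,g$, hence each $\partial_{\underline{\sigma},i}$ is a $\sigma_i$-derivation.

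For the converse, the key observation is that $\partial_{\underline{\sigma},i}(x_i) = 1$: indeed $\mathrm{d}(x_i) = \Theta_{A,(1)_{\underline{\sigma}}}(x_i) - x_i = \xi_i$ in $\Omega^1_{A,\underline{\sigma}} \subset P_{A,(1)_{\underline{\sigma}}}$, and by construction $e_i$ is the dual basis vector to $\xi_i = 1 \otimes x_i - x_i \otimes 1$, so $\partial_{\underline{\sigma},i}(x_i) = e_i(\xi_i) = 1$. Assuming $\partial_{\underline{\sigma},i}$ is a $\sigma_i$-derivation, $(\star)$ holds for all $f,g$; specializing to $f = x_i$ makes the second factor equal $1$, forcing
\[
\sigma_i(g) - g - (\sigma_i(x_i)-x_i)\partial_{\underline{\sigma},i}(g) = 0
\]
for every $g \in A$, which is precisely the classical condition.

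There is no substantial obstacle here: the entire argument rests on Lemma \ref{formder} together with the normalization $\partial_{\underline{\sigma},i}(x_i) = 1$, and the only point to watch is that the implication $(\star) \Rightarrow$ classical requires a choice of $f$ at which $\partial_{\underline{\sigma},i}(f)$ is a unit (or at least regular), which is guaranteed by taking $f = x_i$.
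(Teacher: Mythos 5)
Your proposal is correct and takes the same approach as the paper: the paper's proof is simply the one-line remark ``This follows from Lemma \ref{formder},'' and your argument spells out exactly that comparison, i.e.\ subtracting the twisted Leibniz rule from the product formula of Lemma \ref{formder} and then using the normalization $\partial_{\underline{\sigma},i}(x_i)=1$ to extract the classical condition in the converse direction.
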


\begin{proof}
This follows from Lemma \ref{formder}.
\end{proof}

\begin{prop}\label{restriction}
Assume that $A$ is complete, and let $\underline{x}$ be good \'etale coordinates associated with a good model $(\varphi_0, R_0, A_0)$. Assume that $\sigma$ is an endomorphism of $A$ which is infinitesimal over $A_0$. Then, the endomorphism $\sigma$ is uniquely determined by its restriction to $R[X]$: if $\tau$ is an $R$-linear endomorphism of $A$ that is infinitesimal over $A_0$ such that $\tau|_{R[X]} = \sigma|_{R[X]}$, then $\tau = \sigma$.
\end{prop}

\begin{proof}
Given the following diagram
\[
\xymatrix{
R_0[X] \ar[r]^{X_i \mapsto \sigma(x_i)} \ar[d]_{X_i \mapsto x_i}
& A_0 \ar[d] \\
A_0 \ar[r]
& A_0 \big/ \big( (\sigma - \mathrm{Id})(A_0) + (\tau - \mathrm{Id})(A_0) \big)
}
\]
The left vertical morphism being formally \'etale, there exists a unique lifting from $A_0$ to $A_0$ that agrees with $\sigma$ on $R_0[X]$. This lifting uniquely extends to $A$ using the fact that $A = A_0 \otimes_{R_0} R$, and it can be easily verified that the extension is indeed equal to $\sigma$ and also equal to $\tau$.
\end{proof}

\begin{defn}
    Elements $\underline{x}$ of $A$ are called \emph{transversal} coordinates with respect to $\underline{\sigma}$ if $\sigma_i(x_j)=0$ for every $1 \leq i,j \leq d, i \neq j.$
\end{defn}

\begin{prop}
Assume that \( A \) is complete. If \( \underline{x} \) are transversal good $\underline{\sigma}$-coordinates over \( A \) associated with a good model \( (u_0, R_0, A_0) \), and for all \( i = 1, \ldots, d \), \( \sigma_i \) is infinitesimal over \( A_0 \), then \( \underline{x} \) are classical coordinates.
\end{prop}

\begin{proof}
Define
\[
\forall i = 1, \ldots, d,~ \forall f \in A, \quad \sigma_i'(f) = f + (\sigma_i(x_i) - x_i) \partial_{\underline{\sigma},i}(f).
\]
We begin by demonstrating that \( \sigma_i' \) is a ring morphism. For \( f, g \in A \), we have
\[
\sigma_i'(fg) = fg + (\sigma_i(x_i) - x_i) \partial_{\underline{\sigma},i}(fg),
\]
and so
\[
\sigma_i'(fg) = fg + (\sigma_i(x_i) - x_i) \left( f \partial_{\underline{\sigma},i}(g) + \sigma_i'(g) \partial_{\underline{\sigma},i}(f) \right),
\]
which simplifies to
\[
\sigma_i'(fg) = f(g + (\sigma_i(x_i) - x_i) \partial_{\underline{\sigma},i}(g)) + (\sigma_i(x_i) - x_i) \sigma_i'(g) \partial_{\underline{\sigma},i}(f).
\]
Thus, we can write:
\[
\sigma_i'(fg) = (f + (\sigma_i(x_i) - x_i) \partial_{\underline{\sigma},i}(f)) \sigma_i'(g) = \sigma_i'(f) \sigma_i'(g).
\]
This confirms that \( \sigma_i' \) is a ring morphism. One can easily check by transversality that for $i,j=1, \ldots, d, ~ \sigma_i'(x_j)=\sigma_i(x_j),$ and by $R$-linearity we have
\[
\sigma_{i|_{R[\underline{X}]}} = \sigma_{i|_{R[\underline{X}]}}'.
\]
On the other hand, by definition of $\sigma_i'$,
\[
\sigma_i' \equiv \text{Id} \, \text{mod} \left( (\sigma_i - \text{Id})(A_0) \right).
\]
Here, we use the inclusion \( \partial_{\underline{\sigma},i}(A_0) \subset (A_0) \), which follows from the assumption that \(\underline{x} \) are good $\underline{\sigma}$-coordinates (see the proof of Proposition \ref{bonsigmacord}).
Thus, by applying Proposition \ref{restriction}, we conclude that \( \sigma_i = \sigma_i' \). \qedhere
\end{proof}

The following definition is derived from classical calculus.

\begin{prop}\label{nul}
If the (good) \( \underline{\sigma} \)-coordinates \( \underline{x} \) are classical, and for each \( i ~=~ 1, \ldots, d \), \( \sigma_i(x_i) \in R[x_i] \), then for every \( i \neq j \), we have \( \partial_{\underline{\sigma},j} (\sigma_i (x_i)) = 0 \).

\end{prop}

\begin{proof}
Let \( \sigma_i(x_i) = \sum_{k=1}^m a_k x_i^k \), where \( a_k \in R \). We perform a direct computation using the \( R \)-linearity of the derivations and the twisted Leibniz rule:
\[
\partial_{\underline{\sigma},j}(\sigma_i(x_i)) = \partial_{\underline{\sigma},j}\left(\sum_{k=1}^m a_k x_i^k\right) = \sum_{k=1}^m a_k \partial_{\underline{\sigma},j}(x_i^k).
\]
We proceed by induction to show that for every \( k \) and every \( i \neq j \), \( \partial_{\underline{\sigma},j}(x_i^k) = 0 \). For \( k = 0 \), this follows from the classical argument:
\[
\partial_{\underline{\sigma},i}(1) = \partial_{\underline{\sigma},i}(1 \cdot 1) = 2 \partial_{\underline{\sigma},i}(1),
\]
which implies \( \partial_{\underline{\sigma},i}(1) = 0 \). For \( k = 1 \), this follows directly from the definition of \( \partial_{\underline{\sigma},i} \).
Now, assume the property holds for some integer \( k \). Since the coordinates are assumed to be classical, we have:
\[
\partial_{\underline{\sigma},j}(x_i^{k+1}) = \partial_{\underline{\sigma},j}(x_i^k \cdot x_i) = x_i^k \partial_{\underline{\sigma},j}(x_i) + \sigma_j(x_i) \partial_{\underline{\sigma},j}(x_i^k) = 0. \qedhere
\]
\end{proof}

\begin{defn}
A twisted $R$-adic algebra $A$ of order $d$ endowed with (good) $\underline{\sigma}$-coordinates $\underline{x}$ is said to be \emph{twisted of order $d$ of type Schwarz} if the following equalities are satisfied:
\[
\forall i,j=1, \ldots, d,~ \partial_{\underline{\sigma},i} \circ \partial_{\underline{\sigma},j}=\partial_{\underline{\sigma},j} \circ \partial_{\underline{\sigma},i} \text { and } \forall i \neq j, ~ \sigma_i  \circ \partial_{\underline{\sigma},j}= \partial_{\underline{\sigma},j} \circ \sigma_i. 
\]
\end{defn}

\begin{prop}\label{schwarz} If the (good) $\underline{\sigma}$-coordinates $\underline{x}$ are classical and for each $i~=~1, \ldots, d,$ $\sigma_i(x_i) \in R[x_i]$
then $A$ is a twisted $R$-algebra of order $d$ of type Schwarz.
\end{prop}

\begin{proof}
The endomorphisms $\sigma_i$ commute by hypothesis.
Now, we aim to show that the derivatives also commute. The following diagram is commutative
\[
\xymatrix{
A \ar[dd]   \ar[rr]^-{\text{d}} & & P_{A} \ar[rr]^-\delta \ar[dd]^-{\tilde{\Theta}_{A, (2)_{\underline{\sigma}}}} \ar@<-2ex>[rr]^-{p_1} \ar@<2ex>[rr]^-{p_2}& & P_{A} \otimes_A' P_{A} \ar[dd]^-{\tilde{\Theta}_{A, (1)_{\underline{\sigma}}} \otimes \tilde{\Theta}_{A, (1)_{\underline{\sigma}}}} \\
\\
A \ar[rr]_-{\Theta_{A,(2)_{\underline{\sigma}}}-\text{Id}} &  & P_{A, (2)_{\underline{\sigma}}}\ar@<-2.5ex>[rr]^-{p_1'} \ar@<2.5ex>[rr]^-{p_2'} \ar[rr]^-{\delta_{1,1}} & & P_{A, (1)_{\underline{\sigma}}} \otimes_A' P_{A, (1)_{\underline{\sigma}}}
}
\]
In this diagram, we define
\begin{align*}
&\text{d}:A \rightarrow P_A, ~a \mapsto 1 \otimes a - a \otimes 1,\\
&p_1: P_{A} \rightarrow P_{A} \otimes_A' P_{A}, ~ a \otimes b \mapsto a\otimes b\otimes 1 \otimes 1\\
\text{and } &p_2: P_{A} \rightarrow P_{A} \otimes_A' P_{A}, ~ a \otimes b \mapsto 1 \otimes 1 \otimes a \otimes b.
\end{align*}
By analogy $p_2'$ and $p_1'$ are the morphisms associated with the morphism $P_{A, (2)_{\underline{\sigma}}} ~\rightarrow~ P_{A, (1)_{\underline{\sigma}}}$. In the upper part of the diagram, $\otimes_A'$ denotes that the action on the right is given by $p_1$ and the action on the left is given by $p_2$. Similarly, in the lower part of the diagram, $\otimes_A'$ indicates that the action on the left is given by $\Theta_{A,(2)_{\underline{\sigma}}}$.
We now define
\[
d_1: =p_2 - \delta + p_1 ~\text{  and  }~ d_2:=p_2' - \delta_{1,1} + p_1'.
\]
It is straightforward to verify that $d_1 \circ \text{d} =0$, which implies $d_2 \circ \left(\Theta_{A,(2)_{\underline{\sigma}}} - \text{Id}\right)=0$.
Therefore, for every $f \in A$, we have
\[
\delta_{1,1} \left(\left(\Theta_{A,(2)_{\underline{\sigma}}} - \text{Id}\right)(f)\right)=p_1'\left(\left(\Theta_{A,(2)_{\underline{\sigma}}} - \text{Id}\right)(f)\right)+p_2'\left(\left(\Theta_{A,(2)_{\underline{\sigma}}} - \text{Id}\right)(f)\right).
\]
Using the morphism $P_{A, (2)_{\underline{\sigma}}} \rightarrow P_{A, (1)_{\underline{\sigma}}}$ we deduce that $\left(\Theta_{A,(2)_{\underline{\sigma}}} - \text{Id}\right)(f)$ can be expressed as
\[
\sum\limits_i \partial_{\underline{\sigma},i}(f)\xi_i + \sum\limits_{i<j} g_{i,j}\xi_i \xi_j + \sum\limits_i g_i \xi_i^{(2)_{\underline{\sigma}}}
\]
where $g_i$ and $g_{i,j}$ are elements of $A$. Thus, we get
\[
\delta_{1,1}\left(\left(\Theta_{A,(2)_{\underline{\sigma}}} - \text{Id}\right)(f)\right)=\delta_{1,1}\left(\sum\limits_i \partial_{\underline{\sigma},i}(f)\xi_i + \sum\limits_{i<j} g_{i,j}\xi_i \xi_j + \sum\limits_i g_i \xi_i^{(2)_{\underline{\sigma}}}\right). 
\]

Since $\xi_i^{(2)_{\underline{\sigma}}}=\xi_i(\xi_i+x_i-\sigma_i(x_i))=0$ in $P_{A, (1)_{\underline{\sigma}}}$ we compute

\begin{align*}
\delta_{1,1}\left(\xi_i^{(2)_{\underline{\sigma}}}\right)&=\delta_{1,1}(\xi_i(\xi_i+x_i-\sigma_i(x_i)))=\delta_{1,1}(\xi_i^2+(x_i-\sigma_i(x_i))\xi_i) \\
&=(1\otimes' \xi_i+\xi_i \otimes' 1)^2 + (x_i -\sigma_i (x_i))\xi_i \otimes' 1 + (x_i -\sigma_i (x_i)) \otimes' \xi_i \\
&=\xi_i^2 \otimes' 1 + 2(\xi_i \otimes' \xi_i) + 1 \otimes' \xi_i^2 + (x_i-\sigma_i(x_i))\xi_i \otimes' 1    \\
&\hspace{0.4cm} + (x_i-\sigma_i(x_i))\otimes'  \xi_i \\
&= -(x_i-\sigma_i(x_i))\xi_i \otimes' 1 + (2\xi_i) \otimes' \xi_i +1 \otimes' \xi_i^2+ (x_i-\sigma_i(x_i))\xi_i \otimes' 1   \\
&\hspace{0.4cm}+ (x_i-\sigma_i(x_i))\otimes'  \xi_i\\
&=(2 \xi_i) \otimes' \xi_i  -1 \otimes' (x_i-\sigma_i(x_i))\xi_i + (x_i-\sigma_i(x_i)) \otimes' \xi_i \\
&=(2 \xi_i) \otimes' \xi_i -\text{d}(x_i-\sigma_i(x_i))\otimes' \xi_i \\
&=(2 \xi_i) \otimes' \xi_i - \xi_i \otimes' \xi_i +\text{d}(\sigma_i(x_i))\otimes' \xi_i \\
&= (\xi_i+\text{d}(\sigma_i(x_i))\otimes' \xi_i \\
&= \left( \xi_i +\partial_{\underline{\sigma},i}(\sigma_i(x_i))\xi_i \right) \otimes' \xi_i \\
&=  (1+\partial_{\underline{\sigma},i}(\sigma_i(x_i)))\xi_i \otimes' \xi_i.
\end{align*}
Thus, we conclude
\begin{align*}
\delta_{1,1}\left(\Theta_{A,(2)_{\underline{\sigma}}}- \text{Id})(f)\right)&=\delta_{1,1}\left(\sum\limits_i \partial_{\underline{\sigma},i}(f)\xi_i + \sum\limits_{i<j} g_{i,j}\xi_i \xi_j + \sum\limits_i g_i \xi_i^{(2)_{\underline{\sigma}}}\right) \\
&=\sum\limits_i \partial_{\underline{\sigma},i}(f)\left(\xi_i \otimes' 1 + 1 \otimes' \xi_i\right) +\sum_{i<j} g_{i,j} \xi_i \otimes' \xi_j  \\
&\phantom{=}+ \sum_{i>j} g_{j,i} \xi_i \otimes' \xi_j+\sum_{i=1}^d g_i \left(1+\partial_{\underline{\sigma},i}(\sigma_i(x_i))\right) \xi_i \otimes' \xi_i\\
&=\sum\limits_i \partial_{\underline{\sigma},i}(f) \xi_i \otimes' 1 + \sum\limits_j \partial_{\underline{\sigma},j}(f) \otimes' \xi_j \\
&\phantom{=}+ \sum\limits_{i<j} g_{i,j} \xi_i \otimes'\xi_j \\
&\phantom{=}+\sum\limits_{i>j} g_{j,i} \xi_i \otimes' \xi_j \\
&\phantom{=}+  \sum\limits_{i=1}^d g_i(1+\partial_{\underline{\sigma},i}(\sigma_i(x_i))) \xi_i \otimes' \xi_i.
\end{align*}
Here we have used that $\xi_i\xi_j=0$ in $P_{A, (1)_{\underline{\sigma}}}$.
On the other hand, we can directly compute $p_1'\left((\Theta_{A,(2)_{\underline{\sigma}}} - \text{Id})(f)\right)$ and $p_2'\left((\Theta_{A,(2)_{\underline{\sigma}}} - \text{Id})(f)\right)$
as follows:
\[
p_1'\left((\Theta_{A,(2)_{\underline{\sigma}}} - \text{Id})(f)\right)=\sum\limits_i \partial_{\underline{\sigma},i}(f) \xi_i \otimes' 1
\]
and
\[
\begin{aligned}
p_2'\left((\Theta_{A,(2)_{\underline{\sigma}}} - \text{Id})(f)\right)&=\sum\limits_j 1 \otimes' \partial_{\underline{\sigma},j}(f) \xi_j \\
&= \sum\limits_j \Theta_{A,(1)_{\underline{\sigma}}}\left(\partial_{\underline{\sigma},j}(f)\right) \otimes' \xi_j\\
&=\sum\limits_j\left( \partial_{\underline{\sigma},j}(f)+\sum\limits_i \partial_{\underline{\sigma},i}(\partial_{\underline{\sigma},j}(f))\xi_i\right)  \otimes' \xi_j. 
\end{aligned}
\]
By further simplifying
\begin{align*}
p_2'\left((\Theta_{A,(2)_{\underline{\sigma}}} - \text{Id}\right)(f))=
&\sum\limits_j \partial_{\underline{\sigma},j}(f) \otimes' \xi_j + \sum\limits_{i<j} \partial_{\underline{\sigma},i}(\partial_{\underline{\sigma},j}(f)) \xi_i \otimes' \xi_j\\
&+\sum\limits_{i>j} \partial_{\underline{\sigma},i}(\partial_{\underline{\sigma},j}(f)) \xi_i \otimes' \xi_j +  \sum\limits_i \partial^2_{\underline{\sigma},i}(f) \xi_i \otimes' \xi_i.
\end{align*}
By comparing the coefficients, we obtain the relations
\begin{equation}\label{eq1}
\forall i<j, ~ g_{i,j}=\partial_{\underline{\sigma},i}(\partial_{\underline{\sigma},j}(f))=\partial_{\underline{\sigma},j}(\partial_{\underline{\sigma},i}(f))
\end{equation}
and
\[
\forall i, ~ (1+\partial_{\underline{\sigma},i}(\sigma_i(x_i)))g_i=\partial^2_{ \underline{\sigma},i}(f).
\]
This enables us to determine that the derivatives commute by equation (\ref{eq1}).
To demonstrate that the derivatives commute with the endomorphisms $\sigma_i$, we use the formula from Definition \ref{classique}. On one side, we have:
\[
\sigma_i(\partial_{\underline{\sigma},j}(f))=\partial_{\underline{\sigma},j}(f)+(\sigma_i(x_i)-x_i)\partial_{\underline{\sigma},i}(\partial_{\underline{\sigma},j}(f))=\partial_{\underline{\sigma},j}(f)+(\sigma_i(x_i)-x_i)\partial_{\underline{\sigma},j}(\partial_{\underline{\sigma},i}(f)),
\]
and on the other side:
\[
\begin{aligned}
\partial_{\underline{\sigma},j}(\sigma_i(f))&=\partial_{\underline{\sigma},j}(f+(\sigma_i(x_i)-x_i)\partial_{\underline{\sigma},i}(f))\\
&=\partial_{\underline{\sigma},j}(f)+\partial_{\underline{\sigma},j}((\sigma_i(x_i)-x_i)\partial_{\underline{\sigma},i}(f))\\
&=\partial_{\underline{\sigma},j}(f)+(\sigma_i(x_i)-x_i)\partial_{\underline{\sigma},j}(\partial_{\underline{\sigma},i}(f))+\sigma_j(\partial_{\underline{\sigma},i}(f))\partial_{\underline{\sigma},j}(\sigma_i(x_i)-x_i)\\
&=\partial_{\underline{\sigma},j}(f)+(\sigma_i(x_i)-x_i)\partial_{\underline{\sigma},j}(\partial_{\underline{\sigma},i}(f)). 
\end{aligned}
\]
Here, we more employed Proposition \ref{nul}. Thus, we confirm the commutativity of the derivatives with the endomorphisms $\sigma_i$, completing the proof. \qedhere
\end{proof}

\begin{defn}\label{sym}
The (good) $\underline{\sigma}$-coordinates $\underline{x}$ are said to be \emph{symmetric} if
\begin{align*}
&\forall i =  1, \ldots,d; ~\sigma_i(x_i) \in R[x_i] \text{ and } \\
&\forall n,m \in \mathbb{N}, \forall f \in A, ~\delta_{n,m}\left(\Theta_{A,(n+m)_{\underline{\sigma}}}(f)\right)=1 \otimes'\Theta_{A,(m)_{\underline{\sigma}}}(f).
\end{align*}
Where $\otimes'$ means that the action on the left is given by $\Theta_{A,(n)_{\underline{\sigma}}}$.
\end{defn}

We recall the following general result on surjective morphisms of modules.

\begin{lem}\label{lemnoyau}
Let $f: A \rightarrow B$ and $g: C \rightarrow D$ be surjective morphism of $R$-modules. Then, following equality holds:
\[
\emph{Ker}(f \otimes  g) = \emph{Im}\left(A \otimes \emph{Ker}(g) +  \emph{Ker}(f) \otimes C \rightarrow A \otimes_R C\right). 
\]
\end{lem}

\begin{prop}\label{symcom}
Assume that the (good) $\underline{\sigma}$-coordinates $\underline{x}$ are symmetric. Set
\[
\begin{aligned}
\delta \colon P_{A} &\rightarrow P_{A} \otimes_A' P_{A}, \\
a\otimes b &\mapsto (a \otimes 1) \otimes' (1 \otimes b).
\end{aligned}
\]
Then,
\[
\delta\left(I_{A}^{(n+m+1)_{\underline{\sigma}}}\right) \subset I_{A}^{(n+1)_{\underline{\sigma}}} \otimes_A' P_{A} + P_{A} \otimes_A' I_{A}^{(m+1)_{\underline{\sigma}}}.
\]
\end{prop}

\begin{proof}
By assumption on the coordinates, the following diagram is commutative:
\[
\xymatrix{
 &P_{A} \ar[r]^-\delta \ar[d]_-{\tilde{\Theta}_{A,(n+m)_{\underline{\sigma}}}}  & P_{A} \otimes_A' P_{A} \ar[d]^-{\tilde{\Theta}_{A,(n)_{\underline{\sigma}}} \otimes ~\tilde{\Theta}_{A,(m)_{\underline{\sigma}}}} \\
  & P_{A, (n+m)_{\underline{\sigma}}} \ar[r]_-{\delta_{n,m}} & P_{A, (n)_{\underline{\sigma}}} \otimes_A' P_{A, (m)_{\underline{\sigma}}}.
}
\]
This implies that
\[
\delta \left(I_{A}^{(n+m+1)_{\underline{\sigma}}}\right) \subset \text{ker}\left(\tilde{\Theta}_{A,(n)_{\underline{\sigma}}} \otimes \tilde{\Theta}_{A,(m)_{\underline{\sigma}}}\right).
\]
We now conclude by applying lemma \ref{lemnoyau}
\end{proof}

\begin{defn} \label{Twisted connection}
When the (good) $\underline{\sigma}$-coordinates $\underline{x}$ are classical, a \emph{twisted connection} on an $A$-module $M$ is defined as a map
\[
\nabla_{\underline{\sigma}}:M \rightarrow M \otimes_A \Omega^1_{\underline{\sigma}}
\]
which satisfies the condition that the induced map
\[
\Theta_{M}: M \rightarrow M \otimes_A P_{A, (1)_{\underline{\sigma}}},~ s \rightarrow s \otimes 1 + \nabla_{\underline{\sigma}}(s)
\]
is $\Theta_{A,(1)_{\underline{\sigma}}}$-linear. That is, for all $f \in A$ and $s \in M$, the following relation holds:
\[
 \Theta_M(fs)=\Theta_{A,(1)_{\underline{\sigma}}}(f)\Theta_M(s).
\]
\end{defn}

\begin{rem}
For $n \in \mathbb{N}^*$, set $\Omega^n_{\underline{\sigma}}= \bigwedge^n \Omega^1_{\underline{\sigma}}.$  Since $\Omega^1_{\underline{\sigma}}$ is freely generated by $\text{d}x_i=1\otimes x_i - x_i \otimes 1$ for $i=1,\ldots,d$, (where $\text{d}: A \rightarrow \Omega^1_{\underline{\sigma}}$ is as in Remark \ref{defofd}) as $A$-module, $\Omega^n_{\underline{\sigma}}$ is freely generated by $\text{d}x_I =  \text{d}x_i \wedge \ldots \wedge \text{d}x_{i_n}$ with $i_1<\ldots<i_n$ as $A$-module. We define the additive map
\[
\text{d}_n : \Omega^n_{\underline{\sigma}} \rightarrow \Omega^{n+1}_{\underline{\sigma}}
\]
for $n \in \mathbb{N}$ by the formula $\text{d}(a\text{d}x_I)= \text{d}(a) \wedge \text{d}x_I.$
A twisted connection $\nabla_{\underline{\sigma}}$ on an $A$-module $M$ induces a sequence of additive maps 
 \[
\nabla_{n,\underline{\sigma}}: M \otimes_A \Omega^n_{\underline{\sigma}} \rightarrow M \otimes_A \Omega^{n+1}_{\underline{\sigma}}
 \]
defined by the formula
\[
\nabla_{n,\underline{\sigma}}(m \otimes w)=m \otimes \text{d}_n(w)+\nabla_{\underline{\sigma}}(m)\wedge w.
\]
\end{rem}

\begin{defn}
A twisted connection $\nabla_{\underline{\sigma}}$ on an $A$-module $M$ is said to be \emph{integrable} if $\nabla_{1,\underline{\sigma}} \circ \nabla_{\underline{\sigma}}=0.$
\end{defn}

\begin{rem}\label{7.23}
Suppose that the $\underline{\sigma}$-coordinates $\underline{x}$ are of Schwarz type. Then, for $I \subset \lbrace 1,\ldots,d \rbrace$ with $|I|=n$,
\begin{align*}
    \text{d}_{n+1}\circ \text{d}_n(a\text{d}x_{I})&=\text{d}_{n+1}\left(\sum_{i=1}^d \partial_{\underline{\sigma},i}(a)\text{d}x_i \wedge \text{d}x_I \right)\\
    &= \sum_{i,j=1}^d \partial_{\underline{\sigma},j} \circ \partial_{\underline{\sigma},i}(a)\text{d}x_j\wedge \text{d}x_i \wedge \text{d}x_I\\
    &=\sum_{1 \leq i \leq j \leq d} (\partial_{\underline{\sigma},j}\circ\partial_{\underline{\sigma},i}(a)-\partial_{\underline{\sigma},i}\circ \partial_{\underline{\sigma},j}(a))\text{d}x_j \wedge \text{d}x_i \wedge \text{d}x_I =0,
\end{align*}
namely, $(A, \text{d})$ is integrable. So we can construct the de Rham complex:
\[
\text{DR}(A)=\left[   A \rightarrow  \Omega^1_{\underline{\sigma}} \rightarrow \Omega^2_{\underline{\sigma}} \rightarrow \dots \right].
\]
More generally, for an $A$-module equipped with an integrable twisted connection $\nabla_{\underline{\sigma}}=\sum_{i=1}^d\partial_{M, \underline{\sigma},i} \text{d}x_i,$
\begin{align*}
  \nabla_{n+1,\underline{\sigma}} \circ \nabla_{n,\underline{\sigma}}(m \otimes w) &= \nabla_{n+1,\underline{\sigma}}(m \otimes \text{d}_n(w)+\nabla_{\underline{\sigma}}(m) \wedge w) \\
  & = \nabla_{n+1,\underline{\sigma}}\left(m \otimes \text{d}_n(w) + \sum_{i=1}^d\partial_{M, \underline{\sigma},i}(m) \otimes (\text{d}x_i \wedge w) \right)\\
  &= m \otimes (\text{d}_{n+1}\circ \text{d}_n(w))+\nabla_{\underline{\sigma}}(m) \wedge \text{d}_n(w) \\
  & ~~~~~~-  \sum_{i=1}^d\partial_{M, \underline{\sigma},i}(m) \otimes (\text{d}x_i \wedge \text{d}_n(w)) \\
  &~~~~~~+ \sum_{i,j=1}^d\partial_{M, \underline{\sigma},j} \circ \partial_{M, \underline{\sigma},i} (m)\otimes (\text{d}x_j \wedge \text{d}x_i \wedge w)\\
  &=m \otimes (\text{d}_{n+1} \circ \text{d}_n(w))+(\nabla_{1,\underline{\sigma}} \circ \nabla_{\underline{\sigma}}(m))\wedge w = 0,
\end{align*}
where the last equality follows from the integrability of $(A,\text{d})$ and $(M,\nabla_{\underline{\sigma}}).$ So one can construct the de Rham complex
\[
\text{DR}(M)=\left[   M \rightarrow  M \otimes_A\Omega^1_{\underline{\sigma}} \rightarrow M \otimes_A\Omega^2_{\underline{\sigma}} \rightarrow \dots \right].
\]
\end{rem}

We denote by \( \nabla_{\underline{\sigma}} \)-\(\text{Mod}(A)\) the category of \( A \)-modules equipped with a twisted connection, and by \( \nabla_{\underline{\sigma}}^{\text{Int}} \)-\(\text{Mod}(A)\) the category of \( A \)-modules equipped with an integrable twisted connection.

\begin{thm}\label{equivcate}
When the (good) $\underline{\sigma}$-coordinates $\underline{x}$ are classical, there exists an equivalence of categories
\[
\nabla_{\underline{\sigma}}  \text{-}\mathrm{Mod}(A)
 \simeq \mathrm{T}_{A,\underline{\sigma}}\text{-}\mathrm{Mod}(A).
\]
\end{thm}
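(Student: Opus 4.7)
The plan is to construct quasi-inverse functors by decomposition along the free basis $\{\xi_i\}_{i=1,\ldots,d}$ of $\Omega^1_{A,\underline{\sigma}}$ exhibited in the remarks after Definition \ref{twisted differential forms}. Given $(M,\nabla_{\underline{\sigma}})$ in $\nabla_{\underline{\sigma}}$-$\operatorname{Mod}(A)$, I define operators $\partial_{M,\underline{\sigma},i}: M \to M$ by
\[
\nabla_{\underline{\sigma}}(s) = \sum_{i=1}^d \partial_{M,\underline{\sigma},i}(s) \otimes \xi_i;
\]
conversely, from $(M,(\partial_{M,\underline{\sigma},i})_i)$ in $T_{A,\underline{\sigma}}$-$\operatorname{Mod}(A)$, I define $\nabla_{\underline{\sigma}}$ by exactly the same formula. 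At the level of underlying data these assignments are manifestly inverse to each other, so the content of the theorem is that each respects the defining axiom of the target category.

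The key step is a single computation inside $M \otimes_A P_{A,(1)_{\underline{\sigma}}}$. Using the multiplication rules $\xi_i\xi_j = 0$ for $i \neq j$ and $\xi_j^2 = (\sigma_j(x_j)-x_j)\xi_j$ established in the proof of Lemma \ref{formder}, together with the expansion $\Theta_{A,(1)_{\underline{\sigma}}}(f) = f + \sum_i \partial_{\underline{\sigma},i}(f)\xi_i$, one obtains
\[
\xi_j\,\Theta_{A,(1)_{\underline{\sigma}}}(f) = \bigl(f + (\sigma_j(x_j)-x_j)\partial_{\underline{\sigma},j}(f)\bigr)\xi_j.
\]
The classical coordinates hypothesis (Definition \ref{classique}) is precisely what collapses the right-hand side to $\sigma_j(f)\xi_j$; this is the single place where the hypothesis enters, and it is the main obstacle of the proof, in the sense that without it the $\Theta_{A,(1)_{\underline{\sigma}}}$-linearity condition would correspond not to the twisted Leibniz rule but to the more cumbersome formula of Lemma \ref{formder}.

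Expanding $\Theta_{A,(1)_{\underline{\sigma}}}(f)\Theta_M(s)$ using this identity yields
\[
\Theta_{A,(1)_{\underline{\sigma}}}(f)\,\Theta_M(s) = fs\otimes 1 + \sum_{i=1}^d\bigl(\partial_{\underline{\sigma},i}(f)\,s + \sigma_i(f)\,\partial_{M,\underline{\sigma},i}(s)\bigr)\otimes \xi_i,
\]
while $\Theta_M(fs) = fs\otimes 1 + \sum_i \partial_{M,\underline{\sigma},i}(fs)\otimes \xi_i$. Because $\{1,\xi_1,\ldots,\xi_d\}$ freely generates $P_{A,(1)_{\underline{\sigma}}}$ over $A$ via $p_1^*$, the identity $\Theta_M(fs) = \Theta_{A,(1)_{\underline{\sigma}}}(f)\,\Theta_M(s)$ is equivalent, coefficient by coefficient, to
\[
\partial_{M,\underline{\sigma},i}(fs) = \partial_{\underline{\sigma},i}(f)\,s + \sigma_i(f)\,\partial_{M,\underline{\sigma},i}(s),\qquad i=1,\ldots,d,
\]
which is exactly the defining relation of an action of $T_{A,\underline{\sigma}}$ on $M$. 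Thus both functors are well defined and quasi-inverse.

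Finally, for morphisms, an $A$-linear $\varphi: M \to N$ satisfies $(\varphi\otimes\operatorname{Id})\circ\nabla^M_{\underline{\sigma}} = \nabla^N_{\underline{\sigma}}\circ\varphi$ if and only if, decomposed in the basis $\{\xi_i\}$, $\varphi\circ\partial_{M,\underline{\sigma},i} = \partial_{N,\underline{\sigma},i}\circ\varphi$ for every $i$, so the two notions of morphism coincide and the equivalence of categories is complete.
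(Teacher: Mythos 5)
Your proof is correct and follows essentially the same route as the paper's: decompose $\nabla_{\underline{\sigma}}(s)$ along the basis $\{\xi_i\}$ of $\Omega^1_{A,\underline{\sigma}}$, expand $\Theta_{A,(1)_{\underline{\sigma}}}(f)\Theta_M(s)$ using the relations $\xi_i\xi_j=0$ ($i\neq j$) and $\xi_i^2=(\sigma_i(x_i)-x_i)\xi_i$, and identify coefficients to land on the twisted Leibniz rule, with the classical hypothesis entering exactly where you say it does. Your explicit treatment of morphisms is a small addition the paper leaves implicit, but the core argument is identical.
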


\begin{proof}
We begin by constructing a functor 
\[
\nabla_{\underline{\sigma}} \text{-} \text{Mod}(A) \rightarrow \text{T}_{A,\underline{\sigma}}\text{-}\text{Mod}(A).
\]
Let $M$ be an $A$-module equipped with a twisted connection $\nabla_{\underline{\sigma}}$.
For an element $s$ of $M$, we write
\[
\nabla_{\underline{\sigma}}(s)=\sum_{i=1}^d \nabla_i(s) \otimes \xi_i.
\]
The condition on $\Theta_M$ can be translated to: For all $f \in A$ and $s \in M$,
\[
fs\otimes 1 + \sum_{i=1}^d \nabla_i(fs) \otimes \xi_i=\left(f\otimes 1 + \sum_{i=1}^d \partial_{\underline{\sigma},i}(f)(1 \otimes \xi_i) \right) \left( s \otimes 1 + \sum_{i=1}^d \nabla_i(s) \otimes \xi_i\right).
\]
Expanding the right-hand side, we get
\begin{align*}
fs \otimes 1 + \sum_{i=1}^d \left(f \nabla_i(s)+\partial_{\underline{\sigma},i}(f)s\right)\otimes \xi_i + \sum_{i,j=1}^d  \partial_{\underline{\sigma},i}(f) \nabla_j(s) \otimes \xi_i \xi_j.
\end{align*}
We recall the following relations in $\Omega^1_{A,\underline{\sigma}}$
\[ 
\xi_i\xi_j=0 \text{ for } i \neq j \text{ and } \xi_i^2=(\sigma_i(x_i)-x_i)\xi_i.
\]
Thus, we have
\[
\sum_{i,j=1}^d  \partial_{\underline{\sigma},i}(f) \nabla_j(s) \otimes \xi_i \xi_j=\sum_{i=1}^d (\sigma_i(x_i)-x_i) \partial_{\underline{\sigma},i}(f) \nabla_i(s) \otimes \xi_i.
\]
By identification, for $i=1,\ldots,d,$ we obtain
\begin{align*}
\nabla_i(fs)&=f\nabla_i(s)+\partial_{\underline{\sigma},i}(f)s+(\sigma_i(x_i)-x_i)\partial_{\underline{\sigma},i}(f)\nabla_i(s)\\
&=\partial_{\underline{\sigma},i}(f)s+(f+(\sigma_i(x_i)-x_i)\partial_{\underline{\sigma},i}(f))\nabla_i(s)\\
&=\partial_{\underline{\sigma},i}(f)s+\sigma_i(f)\nabla_i(s).
\end{align*}
Hence, given a twisted connection $\nabla_{\underline{\sigma}}$, the endomorphisms $\nabla_{1}, \ldots, \nabla_{d}$ on $M$ define an object of $\text{T}_{A,\underline{\sigma}}\text{-Mod}(A)$.
Next, we construct an inverse functor
\[
 \text{T}_{A,\underline{\sigma}}\text{-Mod}(A) \rightarrow \nabla_{\underline{\sigma}} \text{-Mod}(A).
\]
Assume that $M$, endowed with actions by $\partial_{\underline{\sigma},1},\ldots,\partial_{\underline{\sigma},d}$, is an object of $\text{T}_{A,\underline{\sigma}}\text{-Mod}(A)$. Set 
\[\nabla_{\underline{\sigma}}(s)=\sum\limits_{i=1}^d \partial_{\underline{\sigma},i}(s) \otimes \xi_i.\]
We need to verify that the induced map $\Theta_M$ is $\Theta_{A,(1)_{\underline{\sigma}}}$-linear. We compute
\begin{align*}
\Theta_{A,(1)_{\underline{\sigma}}}(f)\Theta_M(s)&=\left(f+\sum_{i=1}^d \partial_{\underline{\sigma},i}(f)(1\otimes \xi_i) \right)\left( s\otimes1 + \sum_{i=1}^d \partial_{\underline{\sigma},i}(s) \otimes \xi_i \right)\\
&=fs \otimes + \sum_{i=1}^d (f \partial_{\underline{\sigma},i}(s) + s \partial_{\underline{\sigma},i}(f)) \otimes \xi_i + \sum_{i,j=1}^d  \partial_{\underline{\sigma},i}(f)\partial_{\underline{\sigma},i}(s) \otimes \xi_i \xi_j \\
&= fs \otimes  1 + \sum_{i=1}^d ((f+(\sigma_i(x_i)-x_i)\partial_{\underline{\sigma},i}(f))\partial_{\underline{\sigma},i}(s)+s\partial_{\underline{\sigma},i}(f))\otimes \xi_i \\
&=fs \otimes 1 + \sum_{i=1}^d (\sigma_i(f)\partial_{\underline{\sigma},i}(s)+s\partial_{\underline{\sigma},i}(f))\otimes \xi_i \\
&=fs \otimes 1 + \sum_{i=1}^d \partial_{\underline{\sigma},i}(fs)\otimes \xi_i=\Theta_M(fs).
\end{align*}
It is easy to see that the functors we constructed are quasi-inverses of each other. So we have established our claim.
\end{proof}

\section{Twisted differential operators of infinite level} \label{twisted differential operators of infinite level}
We maintain the hypotheses from the previous section: \( R \) is a Huber ring, and \( (A, \underline{\sigma}) \) is a twisted \( R \)-adic algebra of order \( d \) (Definition \ref{defor}). Additionally, there exist elements \( \underline{x} = (x_1, \ldots, x_d) \) in \( A \) that are (good) \( \underline{\sigma} \)-coordinates (Definition \ref{coor}, \ref{goodcoor}). In this section, we further assume that the coordinates \( \underline{x} \) are symmetric (Definition \ref{sym}). This assumption will be crucial in defining the composition of twisted differential operators.

\begin{defn}\label{twisted differential operator}
Let $M$ and $N$ be $A$-modules. An $R$-linear morphism $\phi:M \rightarrow N$ is called  \emph{twisted differential operator of order at most $n$} if its $A$-linearization $\tilde{\phi}:P_A \otimes_A' M = A \otimes_R M \rightarrow M$ factorizes through $P_{A,(n)_{\underline{\sigma}}}\otimes_A' M$. This condition is equivalent to requiring that $\tilde{\phi}$ is zero on $I_{A}^{(n+1)_{\underline{\sigma}}}$. We denote by $\text{Diff}_{n, \underline{\sigma}}(M,N)$ the set of twisted differential operators of order at most $n$.
\end{defn}

By definition $\text{Diff}_{n, \underline{\sigma}}(M,N) \simeq \text{Hom}_A(P_{A,(n)_{\underline{\sigma}}}\otimes_A' M,N)$. In the case where $M=N$ we will simply write $\text{Diff}_{n, \underline{\sigma}}(M)$. We also define
\[
\text{Diff}_{\underline{\sigma}}(M,N)=\varinjlim_n \text{Hom}_A\left(P_{A,(n)_{\underline{\sigma}}}\otimes_A' M,N\right) \text{ and } \text{D}^{(\infty)}_{A,\underline{\sigma}}=\text{Diff}_{\underline{\sigma}}(A).
\]

\begin{defn}
We define the  basis $\lbrace \partial^{[\underline{k}]}_{\underline{\sigma}}, |\underline{k}|\leq n \rbrace$ of $\text{Diff}_{n,\underline{\sigma}}(A)$ as the dual basis of the basis $\lbrace \underline{\xi}^{(\underline{k})_{\underline{\sigma}}}, |\underline{k}|\leq n \rbrace$ of $P_{A,(n)_{\underline{\sigma}}}$.
\end{defn}

\begin{prop}
Let $u:(A, \underline{\sigma})\rightarrow (B, \underline{\tau})$ be a flat morphism of twisted $R$-adic algebras of order $d$. If $B$ is an $A$-module of finite presentation, then there exists an isomorphism
\[
\mathrm{Diff}_{\underline{\tau}}(B \otimes_A M, B \otimes_A N) \simeq B \otimes_A \mathrm{Diff}_{\underline{\sigma}}(M,N).
\]
\end{prop}

\begin{proof}
By the commutativity of the tensor product with inductive limits, we have
\[
 B \otimes_A \text{Diff}_{\underline{\sigma}}(M,N) = \varinjlim B \otimes_A \text{Hom}_A(P_{A,(n)_{\underline{\sigma}}}\otimes_A' M,N).
\]
We then apply \cite[Chapitre 1, Section 2, Sous-section 10, Proposition 11]{Bourbaki}
to obtain
\[
B \otimes_A \text{Diff}_{\underline{\sigma}}(M,N) \simeq \varinjlim \text{Hom}_B (B \otimes_A P_{A,(n)_{\underline{\sigma}}} \otimes_A' M, B \otimes_A N). 
\]
Finally, by applying Proposition \ref{u}, we conclude the proof.
\end{proof}

\begin{prop}
Let $M,~ N$ and $L$ be $A$-modules. For natural integers $m$ and $n$, given $\varphi \in \emph{Diff}_{n,\underline{\sigma}}(M,N)$ and $\psi \in \emph{Diff}_{m,\underline{\sigma}}(L,M)$, the composition $\varphi \circ \psi$ belongs to $\emph{Diff}_{m+n,\underline{\sigma}}(L,N)$. In particular, $\emph{Diff}_{\underline{\sigma}}(M)$ is a subring of $\emph{End}_{R}(M)$.
\end{prop}

\begin{proof}
Let $\varphi: M \rightarrow N$ be a twisted differential operator of order at most $n$ and $\psi: L \rightarrow M$ be a twisted differential operator of order at most $m$.
We have a diagram
\[
\xymatrix{
P_{A} \otimes_A' L \ar[r]^-{\delta \otimes \text{Id}} & P_{A} \otimes_A' P_{A} \otimes_A' L \ar[r]^-{\text{Id} \otimes \tilde{\psi}} & P_{A} \otimes_A' M \ar[r]^-{\tilde{\varphi}} & N.
}
\]
Since $\tilde{\varphi}$ factorizes through $P_{A,(n)_{\underline{\sigma}}} \otimes_A' M$, and $\text{Id} \otimes_A' \tilde{\psi}$ factorizes through $ P_{A}\otimes_A' P_{A,(m)_{\underline{\sigma}}} \otimes_A' L$, the composition factorizes through $P_{A,(n)_{\underline{\sigma}}} \otimes_A' P_{A,(m)_{\underline{\sigma}}} \otimes_A' L$.  By Proposition \ref{symcom}, it follows that the composition factorizes through $P_{A,(n+m)_{\underline{\sigma}}} \otimes_A' L$, thus proving the claim. 
\end{proof}

\begin{prop}\label{zeub}
We have the following equalities
\[
\mathrm{Diff}_{0,\underline{\sigma}}(A)=A \text{ and } \mathrm{Diff}_{1,\underline{\sigma}}(A)=A \oplus \mathrm{T}_{A, \underline{\sigma}}.
\]
\end{prop}

\begin{proof}
The first equality follows directly from the construction of differential operators. For the second equality, we apply Proposition \ref{suite exacte} and use the fact that $\text{T}_{A, \underline{\sigma}}$ is the dual of $\Omega^1_{A, \underline{\sigma}}$.
\end{proof}

Recall that for an element $q$ of $R$ and every $n \in \N$, we define the \emph{$q$-analogue} of $n$ as
\[
(n)_q:=1+q+\ldots+q^{n-1}.
\]
Similarly, we define the $q$-analogue of the factorial of $n$ as
\[
(n)_q!:=(0)_q \ldots (n)_q,
\]
with the convention $(0)_q!=1$. Also, we define the $q$-analogue of the binomial coefficient by induction as
\[
\binom{n}{k}_q=\binom{n-1}{k-1}_q+q^k\binom{n-1}{k}_q
\]
with the convention
\[
\binom{0}{k}_q = 
\begin{cases}
    1, & \text{ if } k=0, \\
    0, & \text{otherwise.}
\end{cases}
\]
If $(n)_q$'s for $n\geq 1$ are invertible we have
\[
 \binom{n}{k}_q = \frac{(n)_q!}{(k)_q!(n-k)_q!}
\]
(see \cite{SQ1} for details).

\begin{exam}
Let $\underline{q}=(q_1,\ldots,q_d )$ be elements of $R$. In the case where $\underline{x}$ are $\underline{q}$-coordinates, by the same proof as Corollary 6.2 of \cite{LSQ3}, we have
\[
\forall i = 1, \ldots, d, ~ \forall k \in \mathbb{N}, ~ \forall z \in A,  \partial^k_{q_i}(z)=(k)_{q_i}! \partial_{q_i}^{[k_i]}(z).
\]
where $k_i:=(0,\ldots,0,k,0,\ldots,0)$ with $k$ placed in the $i$-th entry.
Thus, if we set
\[
\partial_{\underline{\sigma}}^{\underline{k}}:=\partial_{\underline{\sigma},1}^{k_1} \circ \ldots  \circ  \partial_{\underline{\sigma},d}^{k_d}
\]
for $\underline{k}=(k_1,\ldots,k_d) \in \N^d$, we obtain
\begin{equation}\label{qderiv}
\partial_{\underline{\sigma}}^{\underline{k}}=(k_1)_{q_1}!\ldots(k_d)_{q_d}!\partial_{\underline{\sigma}}^{[\underline{k}]}.
\end{equation}
\end{exam}

\begin{thm}\label{equivate 3}
If there exist some elements $\underline{q}=(q_1,\ldots,q_d ) $ of $R$ such that $\underline{x}=(x_1,\ldots,x_d)$ are classical (good) $\underline{q}$-coordinates and
\[
\forall i=1, \ldots, d, ~ \forall n \geq 1, ~ (n)_{q_i} \in R^\times,
\]
then there exists an equivalence of categories
\[
\nabla_{\underline{q}}^{\emph{Int}}\text{-}\emph{Mod}(A) \simeq \emph{D}_{A,\underline{q}}^{(\infty)}\text{-}\emph{Mod}.
\]
\end{thm}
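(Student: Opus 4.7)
The plan is to use Theorem \ref{equivcate} to reduce the claim to matching commutativity of the induced derivations on $M$ with integrability of the associated twisted connection, after first identifying $\text{D}^{(\infty)}_{A,\underline{\sigma}}$ with the presented algebra $\text{D}_{A,\underline{\sigma}}$ under the invertibility hypothesis on the $q$-integers.

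For the identification $\text{D}^{(\infty)}_{A,\underline{\sigma}} \simeq \text{D}_{A,\underline{\sigma}}$, formula (\ref{qderiv}) reads $\partial^{\underline{k}}_{\underline{\sigma}} = (k_1)^!_{q_1}\cdots (k_d)^!_{q_d} \cdot \partial^{[\underline{k}]}_{\underline{\sigma}}$, so when every $(n)_{q_i}$ is invertible in $R$, the standard $A$-basis $\{\partial^{[\underline{k}]}_{\underline{\sigma}}\}_{\underline{k} \in \mathbb{N}^d}$ of $\text{D}^{(\infty)}_{A,\underline{\sigma}}$ is obtained from the non-commutative monomials $\{\partial^{\underline{k}}_{\underline{\sigma}}\}$ by rescaling with a unit. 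By Proposition \ref{schwarz} the derivations $\partial_{\underline{\sigma},i}$ commute pairwise in $\text{End}_R(A)$, and by Theorem \ref{derivation} each satisfies the twisted Leibniz relation $\partial_{\underline{\sigma},i} \circ a = \partial_{\underline{\sigma},i}(a) + \sigma_i(a) \partial_{\underline{\sigma},i}$. These are the defining relations (\ref{cond1}) and (\ref{cond2}) of $\text{D}_{A,\underline{\sigma}}$, so there is a canonical surjective ring morphism $\text{D}_{A,\underline{\sigma}} \twoheadrightarrow \text{D}^{(\infty)}_{A,\underline{\sigma}}$; the same relations force every element of $\text{D}_{A,\underline{\sigma}}$ to lie in the $A$-span of $\{\partial^{\underline{k}}_{\underline{\sigma}}\}$, and freeness of $\text{D}^{(\infty)}_{A,\underline{\sigma}}$ as an $A$-module on the rescaled basis yields injectivity. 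A $\text{D}^{(\infty)}_{A,\underline{\sigma}}$-module structure on $M$ is therefore equivalent to an $A$-module structure together with pairwise commuting $\sigma_i$-derivations $\partial_{M,i}$.

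It then remains to check that integrability of the twisted connection associated, via Theorem \ref{equivcate}, to $(M, \partial_{M,1}, \ldots, \partial_{M,d})$ is equivalent to the commutativity of the $\partial_{M,i}$'s. Writing $\nabla_{\underline{\sigma}}(m) = \sum_i \partial_{M,i}(m) \otimes \xi_i$ and expanding $\nabla_{2,\underline{\sigma}}(\nabla_{\underline{\sigma}}(m))$ via $\nabla_{n,\underline{\sigma}}(m \otimes w) = m \otimes d(w) + \nabla_{\underline{\sigma}}(m) \wedge (-1)^n w$, one collects terms of the form $\partial_{M,j}(\partial_{M,i}(m)) \otimes \xi_j \wedge \xi_i$ together with corrections involving $d(\xi_i)$. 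The diagonal contributions that arose in $P_{A,(1)_{\underline{\sigma}}}$ from the relation $\xi_i^2 = (\sigma_i(x_i) - x_i)\xi_i$ disappear in $\Omega^2_{A,\underline{\sigma}}$ since $\xi_i \wedge \xi_i = 0$, while the transversality of $\underline{q}$-coordinates ($\sigma_i(x_j) = x_j$ for $i \neq j$) together with Proposition \ref{nul} kills the remaining cross-corrections. The coefficient of $\xi_j \wedge \xi_i$ for $j < i$ then reduces, up to an overall sign, to $[\partial_{M,i}, \partial_{M,j}](m)$, yielding the equivalence. The main technical obstacle is executing this twisted de Rham computation cleanly — verifying that the exterior differential $d$ on $\Omega^\bullet_{A,\underline{\sigma}}$ behaves as expected on the basis $(\xi_i)$ and that no residual torsion remains in front of the commutator — which is exactly where the symmetry hypothesis on the coordinates (Definition \ref{sym}) and the relations in $P_{A,(1)_{\underline{\sigma}}}$ come into play.
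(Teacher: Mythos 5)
Your proposal is correct and takes essentially the same route as the paper's (terse) proof, which just cites Proposition \ref{equivcate}, formula (\ref{qderiv}), and Proposition \ref{schwarz}; you have simply unpacked what that citation requires. Your identification of $\text{D}^{(\infty)}_{A,\underline{\sigma}}$ with the presented algebra $\text{D}_{A,\underline{\sigma}}$ via the unit rescaling of formula (\ref{qderiv}) --- surjectivity because $\partial^{[\underline{k}]}_{\underline{\sigma}}$ is a unit multiple of $\partial^{\underline{k}}_{\underline{\sigma}}$, injectivity because a spanning set maps to a free basis --- is precisely what makes the hypotheses $ (n)_{q_i}\in R^\times$ bite, and your invocation of Proposition \ref{schwarz} and Theorem \ref{derivation} for the defining relations is exactly what the paper leans on. The only place you could tighten is the closing integrability computation: once one grants $\text{d}(\xi_i)=0$ (which follows from $\xi_i=\text{d}(x_i)$ and $\text{d}^2=0$), the curvature of $\nabla_{\underline{\sigma}}$ reduces directly to $\sum_{i<j}\bigl(\partial_{M,j}\partial_{M,i}-\partial_{M,i}\partial_{M,j}\bigr)(m)\otimes\xi_i\wedge\xi_j$, so integrability is commutativity; your worry about "residual torsion" and the symmetry hypothesis at that last stage is cautious but not actually needed there --- symmetry is already consumed in defining the ring composition on $\text{D}^{(\infty)}_{A,\underline{\sigma}}$, and the $P_{A,(1)_{\underline{\sigma}}}$ relations $\xi_i^2=(\sigma_i(x_i)-x_i)\xi_i$ never enter $\Omega^2$ because $\xi_i\wedge\xi_i=0$.
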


\begin{proof}
If we are given an object $M$ in $\nabla_{\underline{q}}^{\text{Int}}\text{-}\text{Mod}(A)$, by Theorem \ref{equivcate}, there exists an action of elements of $\text{T}_{A,\underline{q}}$ on $M$, and by (\ref{qderiv}), we can endow an action of elements of $\text{D}_{A,\underline{q}}^{(\infty)}$ on $M$. The twisted Leibniz rule and integrability for $M$ and Proposition \ref{schwarz} imply that this action defines a $\text{D}_{A,\underline{q}}^{(\infty)}$-module structure on $M$

Conversely, if $M$ is a $\text{D}_{A,\underline{q}}^{(\infty)}$-module, we have an action of elements of $\text{T}_{A,\underline{q}}$ on $M$, and the ring structure of $\text{D}_{A,\underline{q}}^{(\infty)}$ implies the twisted Leibniz rule for this action and integrability. So $M$ is an object in $\nabla_{\underline{q}}^{\text{Int}}\text{-}\text{Mod}(A)$. 

It is clear the the two construction are inverses each other, and so we have the equivalence.
\end{proof}

We denote by $\overline{\text{D}}_{A, \underline{\sigma}}$ the smallest subring of $\text{End}_R(A)$ containing $A$ and $\text{T}_{A, \underline{\sigma}}$. Additionally, we define $\text{D}_{A, \underline{\sigma}}$ as the non-commutative polynomial ring in $\partial_{\underline{\sigma},1},\ldots,\partial_{\underline{\sigma},d}$ over $A$, subject to the following relations
\begin{equation}\label{cond1}
\forall i \neq j,~  \partial_{\underline{\sigma},i} \circ \partial_{\underline{\sigma},j}=\partial_{\underline{\sigma},j} \circ \partial_{\underline{\sigma},i},
\end{equation}
\begin{equation}\label{cond2}
\text{ and   }~ \forall a \in A, ~ \partial_{\underline{\sigma},i} a=\partial_{\underline{\sigma},i}(a)+\sigma_i(a)\partial_{\underline{\sigma},i}.
\end{equation}

\begin{prop}
If $\underline{x}$ are classical coordinates, there exists a factorization
\[
\emph{D}_{A, \underline{\sigma}}\twoheadrightarrow \overline{\emph{D}}_{A, \underline{\sigma}} \hookrightarrow \emph{D}^{(\infty)}_{A,\underline{\sigma}}.
\]
\end{prop}

\begin{proof}
The injection comes from Proposition \ref{zeub}. For the surjection, we can define, for $i=1, \ldots, d,$ a map that sends $\partial_{\underline{\sigma},i}$ to the corresponding endomorphism of $A$. Since the coordinates are symmetric and classical, the conditions (\ref{cond1}) and (\ref{cond2}) are satisfied. This ensures that the maps is well-defined and surjective by the definition of $\text{T}_{A, \underline{\sigma}}$.
\end{proof}

Finally, we describe the relation between $\text{D}^{(\infty)}_{A,\underline{\sigma}}$-modules and $A$-modules endowed with twisted Taylor structure, which is established in \cite{LSQ3}.

\begin{lem}
    If $M$ is a $\emph{D}^{(\infty)}_{A,\underline{\sigma}}$-module, the canonical map $\emph{D}^{(\infty)}_{A,\underline{\sigma}} \rightarrow \emph{End}_R(M)$ induces, for all $n \in \N$, a $P_A$-linear map $\emph{Diff}_{n, \underline{\sigma}}(A) \rightarrow \emph{Diff}_{n, \underline{\sigma}}(M).$
\end{lem}

\begin{proof}
    The proof is the same as in \cite[Lemma 5.1]{LSQ3}.
\end{proof}

\begin{defn}\label{twistedtaylorstructure}
A \emph{twisted Taylor structure} on an $A$-module $M$ is a compatible family of maps
\[
\theta_n : M \longrightarrow M \otimes_A P_{A,(n)_{\underline{\sigma}}}
\]
with $\theta_n(am) = \Theta_{A,(n)_{\underline{\sigma}}}(a)\theta_n(m)$ for $a \in A, m \in M$ and $\theta_0 = \mathrm{Id}$, such that the following diagram is commutative for all $n,m \in \mathbb{N}$:
\begin{equation}\label{5}
    \xymatrix{
M \ar[rr]^-{\theta_m} \ar[d]_{\theta_{m+n}} 
 & & M \otimes_A P_{A,(m)_{\underline{\sigma}}} \ar[d]^{\theta_n \otimes \mathrm{Id}} \\
M \otimes_A P_{A,(m+n)_{\underline{\sigma}}} \ar[rr]_-{\mathrm{Id} \otimes \delta_{n,m}} 
 & & M \otimes_A P_{A,(n)_{\underline{\sigma}}} \otimes_A' P_{A,(m)_{\underline{\sigma}}}.
}
\end{equation}

We also define 
\[
\widehat{\theta} := \varprojlim_n \theta_n : 
M \longrightarrow \varprojlim_n \bigl(M \otimes_A P_{A,(n)_{\underline{\sigma}}}\bigr).
\]
\end{defn}

\begin{prop}
If $M$ is a $\emph{D}^{(\infty)}_{A,\underline{\sigma}}$-module, there exists a canonical way to associate a twisted Taylor structure on $M$. Concretely, it is given by
\begin{equation}\label{6}
\theta_n(m) = \sum_{|\underline{k}| \leq n} \partial^{[\underline{k}]}_{\underline{\sigma}} (m) \otimes 
\underline{\xi}^{(\underline{k})}, 
\qquad
\widehat{\theta}(m) = 
\sum_{|\underline{k}| \in \N^d} \partial^{[\underline{k}]}_{\underline{\sigma}} (m) \otimes  \underline{\xi}^{(\underline{k})}.
\end{equation}
This correspondence gives an equivalence between the category of $\emph{D}^{(\infty)}_{A,\underline{\sigma}}$-modules and that of $A$-modules endowed with twisted Taylor structure.
\end{prop}

\begin{proof}
The proof is the same as that in \cite[Proposition 5.3]{LSQ3}, but we recall it here. 
If we are given a $\text{D}^{(\infty)}_{A,\underline{\sigma}}$-module $M$, we have the associated 
$P_A$-linear map
\begin{equation}\label{7}
\mathrm{Diff}_{n,\underline{\sigma}}(A) \longrightarrow \mathrm{Diff}_{n,\underline{\sigma}}(M),
\end{equation}
namely, a $P_A$-linear map
\begin{equation}\label{8}
    \mathrm{Hom}_A(P_{A,(n)_{\underline{\sigma}}},A) \longrightarrow 
\mathrm{Hom}_A(P_{A,(n)_{\underline{\sigma}}} \otimes_A M, M).
\end{equation}
This induces a $P_A$-linear map
\begin{equation}\label{9}
P_{A,(n)_{\underline{\sigma}}} \otimes_A M \longrightarrow 
\mathrm{Hom}_A\bigl(\mathrm{Hom}_A(P_{A,(n)_{\underline{\sigma}}},A),M\bigr) 
\xleftarrow{\ \simeq\ } 
M \otimes_A P_{A,(n)_{\underline{\sigma}}}.
\end{equation}
(The right isomorphism follows from the fact that $P_{A,(n)_{\underline{\sigma}}}$ is a free $A$-module of finite type). This induces the map
\[
\theta_n : M \longrightarrow M \otimes_A P_{A,(n)_{\underline{\sigma}}}
\]
satisfying $\theta_n(am) = \Theta_{A,(n)_{\underline{\sigma}}}(a)\theta_n(m)$ for $a \in A, m \in M$.
It is clear from the construction that $\theta_0 = \mathrm{Id}$ and that 
the $\theta_n$'s are compatible. Also, using the fact that the union of the maps
(\ref{7}) is a ring homomorphism, we can check the commutativity of the diagram \ref{5}.
Thus we have defined a twisted Taylor structure on $M$. Since the above argument 
can be reversed, we obtain the equivalence of categories in the statement.

We check the concrete description of the map $\theta_n$. The map (\ref{8}) is given by
\[
\partial^{[\underline{k}]}_{\underline{\sigma}}
\mapsto \bigl( 1 \otimes m \mapsto 
\partial^{[\underline{k}]}_{\underline{\sigma}}(m)\bigr),
\]
and so the maps in the diagram (\ref{9}) are given by
\[
1 \otimes m \mapsto \biggl(\partial^{[\underline{k}]}_{\underline{\sigma}}
\mapsto \partial^{[\underline{k}]}_{\underline{\sigma}}(m)\biggr)
\longleftrightarrow 
\sum_{|\underline{k}| \leq n} 
\partial^{[\underline{k}]}_{\underline{\sigma}} (m)
\otimes \underline{\xi}^{(\underline{k})_{\underline{\sigma}}}.
\]
Hence the maps $\theta_n, \widehat{\theta}$ are expressed as in \ref{6}.
\end{proof}

\begin{defn}
For a $\text{D}^{(\infty)}_{A,\underline{\sigma}}$-module $M$, we call the maps $\theta_n, \widehat{\theta}$ 
the \emph{twisted Taylor maps} associated to $M$.
\end{defn}

\section{Twisted calculus of finite radius}
From now on, we make the following assumptions: \( R \) is a Tate ring (see Definition \ref{def:tate_ring}), and \( (A, \underline{\sigma}) \) is a complete Noetherian twisted \( R \)-adic algebra of order \( d \) (see Definition \ref{defor}). This implies that \( A \) is also a Tate ring by \cite[Proposition II.2.5.6]{Morel} and is thus equipped with an ultrametric norm \( \lVert \cdot \rVert \). We assume that there exist elements \( \underline{x} = (x_1, \ldots, x_d) \) in \( A \) that are classical and symmetric  (good) \( \underline{\sigma} \)-coordinates (Definitions \ref{coor}, \ref{goodcoor}, \ref{classique}, and \ref{sym}). We will assume that this norm is contractive (Definition \ref{contractante}) with respect to the endomorphisms \( \sigma_1, \ldots, \sigma_d \).

\begin{rems}
\begin{enumerate}
\item Let $A$ be a Huber ring with $(A_0, I_0)$ a couple of definition.
If $M$ is an $A$-module of finite type, we can equip it with the weakest topology 
such that any $A$-linear homomorphism $M \to N$ with $N$ a topological $A$-module 
is continuous.

If we take any surjection $\varphi : A^n \to M$ and denote by $M_\varphi$ 
the topological $A$-module $M$ endowed with the quotient topology induced by $\varphi$, 
any $A$-linear homomorphism $M_\varphi \to N$ with $N$ a topological $A$-module 
is continuous because
\[
A^n \xrightarrow{\varphi} M_\varphi \longrightarrow N
\]
is continuous. Thus, the topology on $M$ in the previous paragraph is weaker than 
that on $M_\varphi$. Then, since $\mathrm{Id}: M \to M_\varphi$ should be 
continuous, we conclude that the topology on $M$ in the previous paragraph 
coincides with that on $M_\varphi$. In particular, the topology on $M_\varphi$ 
is independent of the choice of $\varphi$. Also, we can endow 
$M = M_\varphi$ with a seminorm induced by that on $A^n$.

Also, we see that any $A_0$-module $M_0 \subset M$ of finite type which generates $M$ 
as $A$-module is open in $M$, and the topology on $M_0$ is the $I_0$-adic topology. 
Indeed, we can take a surjection of the form 
$\varphi_0 : A_0^n \to M_0$ which induces a surjection 
$\varphi : A^n \to M$, and since $A_0^n \subset\varphi^{-1}(M_0)$, 
$M_0$ is open in $M$. We have that $I_0^m M_0$ is open for any $m \in \mathbb{N}$ by 
similar reasoning, and for any open submodule $L_0 \subset M_0$, $\varphi^{-1}(L_0)$ contains 
$(I_0^m)^n$ for some $m$ and so $I_0^mM_0 \subset L_0$.

\item If \( A \) is a ring equipped with a submultiplicative norm \( \lVert \cdot \rVert \), then every finite-type \( A \)-module \( M \) can be endowed with a semi-norm. Given a set of generators \( (e_1, \ldots, e_m) \) for \( M \), the semi-norm on \( M \) is defined as follows:
\[ \forall x \in M, \quad \lVert x \rVert_M = \inf_{x = \sum_{i=1}^m a_i e_i} \left\lbrace \sum_{i=1}^m \lVert a_i \rVert \right\rbrace. \] 
For more details, we refer the reader to Section 2 of \cite{kedliu}. 

\item In what follows, it will be necessary to assume that $A$-modules of ﬁnite
type are complete in order for certain limits to exist and to be unique. This
assumption holds under the hypotheses we are working with: Indeed, let $A$ be
as in the beginning of this section with $(A_0 , I_0 )$ a couple of deﬁnition, let $M$
be an $A$-module of ﬁnite type and let $M_0 \subset M$ be an $A_0$-module of ﬁnite type
generating $M$ as $A$-module. Then, with repect to the $I_0$-adic topology, $M_0$ is
complete by \cite[0 Corollary 8.2.20]{FujiwaraKato2018}. Thus, $M$ is complete. In particular, the
seminorm on $M$ explained in 1. is a norm in our situation.
Another approach would be to require that there exists a Noetherian ring of
deﬁnition $A_0$ for $A$ such that $A$ is of ﬁnite presentation over $A_0$, or alternatively, that the module $M$ is ﬂat of ﬁnite presentation over $A$.
\end{enumerate}
\end{rems}

\subsection{Twisted differential principal parts of finite radius}

\begin{defn}
The \emph{$\underline{x}$-radius} of $\underline{\sigma}=(\sigma_1,\ldots,\sigma_d)$ is defined as
\[
\rho(\underline{\sigma})=\max_{1 \leq i \leq d} \lVert x_i - \sigma_i(x_i) \rVert.
\]
\end{defn}

\begin{lem}\label{3.4}
If we set $\underline{\sigma}^n=\left\lbrace \sigma_1^n,\ldots,\sigma_d^n \right\rbrace$, then
\[
\rho(\underline{\sigma}^n) \leq \rho(\underline{\sigma}).
\]
\end{lem}

\begin{proof}
We proceed by induction on $n$:
\[
\begin{aligned}
\sup_{1 \leq i \leq d} \lVert x_i - \sigma_i^{n+1}(x_i)\rVert & \leq \max \left\lbrace \sup_{1 \leq i \leq d} \lVert x_i - \sigma_i^{n}(x_i)\rVert, \sup_{1 \leq i \leq d} \lVert \sigma_i^n(x_i-\sigma_i(x_i))\rVert \right\rbrace \\
& \leq \max \left\lbrace \rho( \underline{\sigma}^n), \rho(\underline{\sigma }) \right\rbrace = \rho(\underline{\sigma}).
\end{aligned}
\]
The last inequality holds because the norm is contractive with respect to the endomorphisms $\sigma_1,\ldots,\sigma_d$.
\end{proof}

Let $\eta \in \mathbb{R}$ such that $0<\eta<1$. Set
\[
A \left\lbrace \underline{\xi}/\eta \right\rbrace = \left\lbrace \sum\limits_{\underline{n} \in \mathbb{N}^d} z_{\underline{n}} \underline{\xi}^{\underline{n}}, ~z_{\underline{n}} \in A \text{ and } \lVert z_{\underline{n}} \rVert \eta^{|\underline{n}|} \rightarrow 0 \text{ when } |\underline{n}| \rightarrow \infty \right\rbrace.
\]
This forms a Banach algebra with respect to the sup norm
\[
\left\lVert \sum\limits_{\underline{n} \in \mathbb{N}^d} z_{\underline{n}} \underline{\xi}^{\underline{n}} \right\rVert_\eta = \max \lVert z_{\underline{n}} \rVert \eta^{|\underline{n}|}.
\]
We refer the reader to section 2.2 of \cite{kedliu} for more details. This ring is also a Tate ring.

\begin{lem}\label{bla}
If $\eta \geq \rho(\underline{\sigma})$, then the $A$-linear map
\[
\underline{\xi}^{\underline{n}} \mapsto \underline{\xi}^{(\underline{n})_{\underline{\sigma}}}
\]
is an isometric automorphism of the $A$-module $A \left\lbrace \underline{\xi}/\eta \right\rbrace$.
\end{lem}

\begin{proof}
Recall that for $\underline{k}\in\mathbb{N}^d$,
\[
\underline{\xi}^{(\underline{k})_{\underline{\sigma}}} = \prod_{i=1}^d \prod_{j=0}^{k_i-1} (\xi_i+x_i-\sigma_i^j(x_i))=\xi_1^{k_1}\ldots\xi_d^{k_d}+f_{\underline{k}}
\]
with $f_{\underline{k}} \in A[\underline{\xi}]_{< |\underline{k}|}$. By Lemma \ref{3.4}, for every $j \in \mathbb{N}$ and every $i=1, \ldots, d,$ we have $\lVert x_i - \sigma_i^j(x_i) \rVert \leq \eta$. It follows that $\lVert f_{\underline{k}} \rVert_\eta \leq \eta^{\left(\sum k_i\right)} = \eta^{|\underline{k}|}$, and thus $\left\lVert \underline{\xi}^{(\underline{k})_{\underline{\sigma}}} \right\rVert_\eta= \eta^{|\underline{k}|}$.
Hence, the unique $A$-linear endomorphism of $A[\underline{\xi}]$ that sends $\underline{\xi}^{\underline{k}}$ to $\underline{\xi}^{(\underline{k})_{\underline{\sigma}}}$ is an isometry that preserves the degree. Consequently, it extends uniquely to an isometry of $A \left\lbrace \underline{\xi}/\eta \right\rbrace$ to itself.
\end{proof}

\begin{defn}
An \emph{orthogonal Schauder basis} of a normed \( A \)-module is a family \( \{ s_n \}_{n \in \mathbb{N}} \) of elements of \( M \) such that every element \( s \) in \( M \) can be uniquely expressed as a convergent sum
\[
s = \sum_{n=0}^{\infty} z_n s_n,
\]
where \( z_n \in A \) and \( \lVert s \rVert = \sup \left\lbrace \lVert z_n \rVert \lVert s_n \rVert \right\rbrace \).
\end{defn}

\begin{prop}\label{Schauder}
If \( \eta \geq \rho(\underline{\sigma}) \), the family \( \{ \underline{\xi}^{(\underline{n})_{\underline{\sigma}}} \}_{\underline{n} \in \mathbb{N}^d} \) forms an orthogonal Schauder basis for the \( A \)-module \( A \left\lbrace \underline{\xi}/\eta \right\rbrace \).
\end{prop}

\begin{proof}
This follows directly from Lemma \ref{bla} and the fact that the family \( \{ \underline{\xi}^{\underline{k}} \}_{\underline{k} \in \mathbb{N}^d} \) is an orthogonal Schauder basis of \( A \left\lbrace \underline{\xi}/\eta \right\rbrace \).
\end{proof}

\begin{prop}\label{pr}
If \( \eta \geq \rho(\underline{\sigma}) \), then for all \( n \in \mathbb{N} \), there exists an isomorphism of \( A \)-algebras
\[
A \left\lbrace \underline{\xi}/ \eta \right\rbrace / \left( \underline{\xi}^{(\underline{k})_{\underline{\sigma}}} \; \text{with} \; \underline{k} \in \mathbb{N}^d \; \text{such that} \; |\underline{k}| = n+1 \right) \simeq P_{A,(n)_{\underline{\sigma}}}.
\]
\end{prop}

\begin{proof}
By Proposition \ref{surjection}, we have
\[
P_{A,(n)_{\underline{\sigma}}} = A[\underline{\xi}] / \left( \underline{\xi}^{(\underline{k})_{\underline{\sigma}}} \; \text{with} \; \underline{k} \in \mathbb{N}^d \; \text{such that} \; |\underline{k}| = n \right) \simeq A[\underline{\xi}]_{\leq n}.
\]
Let \( I_n := \left( \underline{\xi}^{(\underline{k})_{\underline{\sigma}}} \; \text{with} \; \underline{k} \in \mathbb{N}^d \; \text{such that} \; |\underline{k}| = n+1 \right) \) denote the ideal generated by these elements in $A[\underline{\xi}]$. 
We can conclude by considering the map
\[
A[\underline{\xi}]_{\leq n} \simeq A[\underline{\xi}] / I_n \rightarrow A \left\lbrace \underline{\xi}/ \eta \right\rbrace / I_n A \left\lbrace \underline{\xi}/ \eta \right\rbrace,
\]
which is bijective by Proposition \ref{Schauder}.
\end{proof}

\begin{prop}\label{tenseurschauder}
Let \( M \) be an \( A \)-module of finite type and suppose it is normed by a surjection $\varphi: A^n \rightarrow M$. Let $\tilde{\varphi}: A \left\lbrace \underline{\xi}/\eta \right\rbrace^n \rightarrow M \otimes_A A\left\lbrace \underline{\xi}/\eta \right\rbrace$ be the surjection induced by $\varphi$ and endow $M \otimes_A A\left\lbrace \underline{\xi}/\eta \right\rbrace$ with the norm induced by $\tilde{\varphi}$. Then, any element $m \in M \otimes_A A\left\lbrace \underline{\xi}/\eta \right\rbrace$ can be expressed uniquely as a convergent sum
\begin{equation}\label{eq:10}
m = \sum_{\underline{i} \in \N^d} m_{\underline{i}} \otimes \underline{\xi}^{(\underline{i})_{\underline{\sigma}}}.
\end{equation}
In particular, there exists a canonical injection
\[
M \otimes_A A\{\underline{\xi}/\eta\}
\longrightarrow
\varprojlim_n \bigl(M \otimes_A P_{A,(n)_{\underline{\sigma}}}\bigr);
\sum_{\underline{i} \in \mathbb{N}^d} m_{\underline{i}} \otimes \underline{\xi}^{(i)_{\underline{\sigma}}}
\longmapsto
\left(
\sum_{|i| \le n} m_{\underline{i}} \otimes \underline{\xi}^{(i)_{\underline{\sigma}}}
\right)_n .
\]

Moreover we have the equality of norms
\[
\lVert m\rVert = \sup_{\underline{i} \in \N^d} \lVert m_{\underline{i}} \rVert \eta^{|\underline{i}|}.
\]
\end{prop}

\begin{proof}
First we prove the uniqueness of the expression~(\ref{eq:10}).
Suppose that $m$ in~(\ref{eq:10}) is equal to $0$.
By Proposition~\ref{pr}, for any $n \in \mathbb{N}$, there exists a map
\[
A\{\underline{\xi}/\eta \rbrace \longrightarrow
A\{\underline{\xi}/\eta \rbrace / (\underline{\xi}^{(k)_{\underline{\sigma}}},\, |k| = n+1)
\simeq P_{A,(n)_\sigma}.
\]
This induces the map
\[
M \otimes_A A\{\underline{\xi}/\eta \rbrace \longrightarrow M \otimes_A P_{A,(n)_{\underline{\sigma}}},
\]
which sends
\[
0 = m = \sum_{\underline{i} \in \mathbb{N}^d} m_{\underline{i}} \otimes \underline{\xi}^{(\underline{i})_{\underline{\sigma}}}
\quad \text{to} \quad
\sum_{|\underline{i}| \le n} m_{\underline{i}} \otimes \underline{\xi}^{(\underline{i})_{\underline{\sigma}}}.
\]
Hence $\sum_{|\underline{i}| \le n} m_{\underline{i}} \otimes \underline{\xi}^{(\underline{i})_{\underline{\sigma}}} = 0$ in
$M \otimes_A P_{A,(n)_{\underline{\sigma}}}
\simeq \bigoplus_{|\underline{i}| \le n} M \cdot \underline{\xi}^{(\underline{i})_{\underline{\sigma}}}$,
and thus $m_{\underline{i}} = 0$ for all $\underline{i}$ with $|\underline{i}| \le n$.
Since this is true for any $n \in \mathbb{N}$, we conclude that
$m_i = 0$ for all $i$, and hence the uniqueness holds.

To prove the existence, let $m \in M \otimes_A A\left\lbrace \underline{\xi}/\eta \right\rbrace$ and write it as
\begin{equation}\label{eq:11}    
m = \tilde{\varphi}\left(\left( \sum_{\underline{i} \in \N^d} a_{j,\underline{i}} \underline{\xi}^{(\underline{i})_{\underline{\sigma}}} \right)_{j=1}^n\right).
\end{equation}
Then, if we put
\begin{equation}\label{eq:12}
m_{\underline{i}} = \varphi((a_{j,\underline{i}})_{j=1}^n),
\end{equation}
we have the equality (\ref{eq:10}). By definition, the norm of $m = \sum_{\underline{i} \in \N^d} m_{\underline{i}} \otimes \underline{\xi}^{(\underline{i})_{\underline{\sigma}}}$ and $m_{\underline{i}}$ are given by
\[
\lVert m \rVert = \inf_{(*)}(\sup_{\underline{i},j} \lVert a_{j, \underline{i}} \rVert \eta^{|\underline{i}|}), ~ \lVert m_{\underline{i}} \rVert = \inf_{(*)_{\underline{i}}}(\sup_{j} \lVert a_{j, \underline{i}} \rVert ),
\]
where $\inf_{(*)}$ is the infimum among $(a_{j,\underline{i}})_{j,\underline{i}}$'s satisfying (\ref{eq:11}), and for each $\underline{i} \in \N^d$, $\inf_{(*)_{\underline{i}}}$ is the infimum among $(a_{j,\underline{i}})_{j}$'s satisfying (\ref{eq:12}). Because the condition (\ref{eq:11}) is equivalent to the conditions (\ref{eq:12}) for all $\underline{i}$, we see that
\begin{align*}
    \lVert m_{\underline{i}} \rVert \eta^{|\underline{i}|} & = \inf_{(*)_{\underline{i}}}(\sup_{j} \lVert a_{j, \underline{i}} \rVert  \eta^{|\underline{i}|} ) \\
    & \leq \inf_{(*)}(\sup_{\underline{i},j} \lVert a_{j, \underline{i}} \rVert \eta^{|\underline{i}|}) = \lVert m \rVert,
\end{align*}
hence $\sup_{\underline{i} \in \N^d} \lVert m_{\underline{i}} \rVert \eta^{|\underline{i}|} \leq \lVert m \rVert $. On the other hand, for any $\varepsilon >0$, we can take for each $\underline{i}$ the elements $(a_{j,\underline{i}})_j$ satisfying (\ref{eq:12}) and
\[
\sup_j \lVert a_{j, \underline{i}} \rVert  \eta^{|\underline{i}|} \leq \lVert m_{\underline{i}} \rVert \eta^{|\underline{i}|} + \varepsilon.
\]
Then the elements $(a_{j,\underline{i}})_{j,\underline{i}}$ satisfy (\ref{eq:11}) and so we have
\[
\lVert m \rVert \leq \sup_{\underline{i},j} \lVert a_{j, \underline{i}} \rVert \eta^{|\underline{i}|} = \sup_{\underline{i}}(\sup_{j} \lVert a_{j, \underline{i}} \rVert \eta^{|\underline{i}|}) \leq \sup_{\underline{i}} \lVert m_{\underline{i}} \rVert \eta^{|\underline{i}|} + \varepsilon.
\]
Hence we conclude that $\lVert m\rVert = \sup_{\underline{i} \in \N^d} \lVert m_{\underline{i}} \rVert \eta^{|\underline{i}|}.$
\end{proof}

\subsection{Radius of convergence}

\begin{defn}\label{radius of convergence}
Let $M$ be a $\dinf$-module of finite type over $A$ and $\eta \in \mathbb{R}$.
\begin{enumerate}
\item Let $s \in M$. Then $s$ is \emph{$\eta$-convergent} if
\[
\left\lVert \partial_{\underline{\sigma}}^{[\underline{k}]} (s) \right\rVert \eta^{|\underline{k}|} \rightarrow 0 \text{ when } |\underline{k}| \rightarrow \infty.
\]
\item $M$ is \emph{$\eta$-convergent} if all elements of $M$ are $\eta$-convergent.

\item $M$ is \emph{strongly $\eta$-convergent} if $M$ is $\eta$-convergent and
\[
\sup_{s\in M, \neq 0} \left( \frac{\sup_{\underline{k} \in \N^d} \lVert\partial_{\underline{\sigma}}^{[\underline{k}]}(s) \rVert \eta^{|\underline{k}|}}{\lVert s \rVert } \right) < \infty.
\]

\item $M$ is \emph{very strongly $\eta$-convergent} if $M$ is $\eta$-convergent and
\[
\sup_{s\in M, \neq 0} \left( \frac{\sup_{\underline{k} \in \N^d} \lVert\partial_{\underline{\sigma}}^{[\underline{k}]}(s) \rVert \eta^{|\underline{k}|}}{\lVert s \rVert } \right) =1.
\]
\item $M$ is \emph{$\eta^\dagger$-convergent} (resp.  \emph{strongly $\eta^\dagger$-convergent}) if it is $\eta'$-convergent (resp. strongly $\eta'$-convergent) for any $\eta'<\eta$.
\end{enumerate}

\end{defn}

\begin{exam}
Assume that \( A = K \lbrace\underline{X}\rbrace \), where \( K \) is a non-archimedean field, and $\underline{X}$ are (good) $\underline{q}$-coordinates for non-zero elements $\underline{q}=(q_1,\ldots,q_d)$ of $K$ with $\lVert q_i \rVert \leq 1$ for all $i$.
Under this setting, and by using formula (\ref{qderiv}), we obtain that for any \(\underline{k}\) and \(\underline{n}\) in \(\mathbb{N}^d\) with \(k_i \leq n_i\) for all \(i = 1, \ldots, d\) the following formula holds:
\[
\partial_{\underline{\sigma}}^{[\underline{k}]}(\underline{X}^{\underline{n}}) = \prod_{i=1}^d \binom{n_i}{k_i}_{q_i} X_i^{n_i - k_i}.
\]
Therefore, using \cite[Proposition 1.4]{GLQ4}, we have
\[
\left\lVert \partial_{\underline{\sigma}}^{[\underline{k}]}(\underline{X}^{\underline{n}}) \right\rVert \leq 1.
\]
As a result, if \( z = \sum_{\underline{n} \in \mathbb{N}^d} a_{\underline{n}} \underline{X}^{\underline{n}} \in K\lbrace \underline{X} \rbrace \), then we obtain
\[
\left\lVert \partial_{\underline{\sigma}}^{[\underline{k}]} (z) \right\rVert \leq \max_{\underline{n}} \left\lVert a_{\underline{n}} \partial_{\underline{\sigma}}^{[\underline{k}]}(\underline{X}^{\underline{n}}) \right\rVert \leq \max_{\underline{n} \geq \underline{k}} \left\lVert a_{\underline{n}} \right\rVert \le \rVert z\lVert.
\]
This implies that the algebra \( A \) is very strongly \(\eta\)-convergent for any \(\eta < 1\).
\end{exam}

\begin{lem}\label{découle}
Let $M$ be a $\emph{D}^{(\infty)}_{A,\underline{\sigma}}$-module of finite type over $A$ and $\eta \geq \rho (\underline{\sigma})$. An element $s$ of $M$ is $\eta$-convergent if and only if its twisted Taylor series
\[
\widehat{\theta}(s)=\sum_{\underline{k} \in \N^d} \der (s) \otimes \underline{\xi}^{(\underline{k})_{\underline{\sigma}}}
\]
is an element of $M \otimes_A A \left\lbrace \underline{\xi}/ \eta \right\rbrace \subset\varprojlim_n ( M \otimes_A  P_{A,(n)_{\underline{\sigma}}})$.
\end{lem}

\begin{proof}
This is an immediate consequence of the definition of $\eta$-convergence and using Proposition \ref{tenseurschauder}. 
\end{proof}

\begin{prop}\label{3.3}
Let \( M \) be a \( \emph{D}^{(\infty)}_{A,\underline{\sigma}} \)-module of finite type over \( A \), and let \( \eta \geq \rho(\underline{\sigma}) \). The module \( M \) is \( \eta \)-convergent if and only if its Twisted Taylor series factorizes in the following way:
\[
\xymatrix{
M \ar[r]^-{\widehat{\theta}} \ar[rd]_-{\theta_\eta} & \varprojlim (M \otimes_A P_{A,(n)_{\underline{\sigma}}})  \\
 & M \otimes_A \aff. \ar[u]
}
\]
Moreover, if $A$ is strongly $\eta$-convergent, the map $\theta_\eta$ is continuous.
\end{prop}

\begin{proof}
The former statement is an immediate consequence of Lemma \ref{découle}. Moreover, by definition, $\theta_\eta$ is continuous if and only if $M$ is strongly $\eta$-convergent. Thus, if $A$ is strongly $\eta$-convergent, the map $\theta_\eta$ for $A$
\[
\theta_{\eta,A}: A \rightarrow \aff
\]
is continuous. So we can take $C>0$ such that $\lVert \theta_{\eta,A }(a)\rVert \leq C \lVert a \rVert$ for any $a \in A$. If we take a generator $m_1,\ldots,m_n$ of $M$ and we consider the norm on $M$ and that on $M \otimes_A \aff$ using this generator, we have for $m=\sum_{i=1}^n a_i m_i \in M$ the inequalities
\[
\lVert \theta_\eta(m)\rVert = \left\lVert \theta_\eta\left(\sum_{i=1}^ma_i m_i \right) \right\rVert \leq \max_i \lVert \theta_{\eta,A}(a_i)\theta_\eta (m_i) \rVert \leq (C \max_i \lVert \theta_\eta (m_i) \rVert)\max_i \lVert a_i \rVert,
\]
hence the inequality $\lVert \theta_\eta (m) \rVert \leq \tilde{C} \lVert m\rVert $ for $\tilde{C} =C\max_i\lVert \theta_\eta (m_i) \rVert$. So $\theta_\eta$ is continuous, as required. 
\end{proof}

\begin{defn}
The map $\theta_\eta$ is called the \emph{twisted Taylor map of radius $\eta$}.
\end{defn}

\begin{prop}
The property of \( \eta \)-convergence for a \( \text{D}^{(\infty)}_{A,\underline{\sigma}} \)-module of finite type over \( A \) is stable under quotients and submodules, provided these submodules and quotients are themselves of finite type over \( A \).
\end{prop}

\begin{proof}
In our setting, for a module $M$ of finite type over $A$ or $\aff$ and a submodule $N \subset M$, the topology on $N$ induced from $M$ coincides with the original topology on $N$: This follows from \cite[Theorem 8.2.19]{FujiwaraKato2018}. Noting this fact, the proof is identical to the one of \cite[Proposition 3.5]{GLQ4}.
\end{proof}

\subsection{Twisted differential operators of finite radius}

\begin{defn}
Assume that $A$ is strongly $\eta$-convergent. The \( A \)-module structure on \( A \left\lbrace \underline{\xi}/\eta \right\rbrace \), induced by the twisted Taylor map of radius \( \eta \), is called the \emph{right structure}. In this case, we denote by \( \aff \otimes' - \) the tensor product where the right structure is applied on the left-hand side. The action is given by
\[
f \otimes' zs = \theta_\eta(z)f \otimes' s.
\]
\end{defn}

\begin{rem}
If \( A \) is \( \eta \)-convergent, it is possible to linearize the twisted Taylor map of radius \( \eta \) to obtain an \( A \)-linear map
\[
\tilde{\theta}_\eta: P_{A} \rightarrow \aff
\]
which induces the commutative diagram below
\begin{equation}\label{diagram}
\xymatrix{
& & A \ar[dl]_-\theta \ar[dr]^-{\widehat{\theta}} \ar[d]^-{\theta_\eta} \\
A[\underline{\xi}] \ar[r] & P_{A} \ar[r]_-{\tilde{\theta}_\eta} & \aff \ar[r] & \varprojlim_n  P_{A,(n)_{\underline{\sigma}}}
} 
\end{equation}
where $\theta$ is defined as
\[
\theta: A \rightarrow P_{A}, ~ f \mapsto 1 \otimes f.
\]
\end{rem}

\begin{lem}\label{9.19}
Assume that $A$ is strongly $\eta$-convergent. Let \( M \) and \( N \) be two \( A \)-modules of finite type. The map
\[
M \rightarrow \aff \otimes'_A M, \quad s \mapsto 1 \otimes' s
\]
induces an injective map:
\[
\emph{Hom}_{A\emph{-cont}}(\aff \otimes'_A M, N) \rightarrow \emph{Hom}_{R\emph{-cont}}(M, N).
\]
\end{lem}

\begin{proof}
Note first that the composition
\[
A[\underline{\xi}] \rightarrow P_A \xrightarrow{\tilde{\theta}_\eta} \aff.
\]
is the canonical inclusion, because $\xi_i$ is sent to $1\otimes x_i - x_i \otimes 1$ in $P_A$ and it is sent to $(x_i+\xi_i)-x_i=\xi_i$ in $\aff$.
Hence the image of \( \tilde{\theta}_\eta \) is dense in \( \aff \). Hence the map
\[
\text{Hom}_{A\text{-cont}}(\aff \otimes'_A M, N) \rightarrow \text{Hom}_{A\text{-cont}}(P_A \otimes'_A M, N) \simeq  \text{Hom}_{R\text{-cont}}(M, N)
\]
induced by the map
\[
M \xrightarrow{i} A \otimes_R M \simeq P_A \otimes_A M
\xrightarrow{\ \tilde{\theta}_{\eta} \otimes \text{Id}\ }
\aff \otimes_A M
\]
(where $i$ is the map $s \mapsto 1 \otimes s$) is injective.
\end{proof}

\begin{defn}\label{operators of radius eta}
Let \( M \) and \( N \) be two \( A \)-modules of finite type. An \( R \)-linear map \( \varphi: M \rightarrow N \) is called a \emph{twisted differential operator of radius \( \eta \)} if it extends to a continuous \( A \)-linear map \( \tilde{\varphi_{\eta}}: \aff \otimes'_A M \rightarrow N \), called the \emph{\( \eta \)-linearization}.
\end{defn}

We will denote by $\text{Diff}^{(\eta)}_{\underline{\sigma}}(M,N)$ the set of twisted differential operators of radius $\eta$.

\begin{rem}
Let $M$ and $N$ be two $A$-modules of finite type. The set $\text{Hom}_{R\text{-cont}}(M,N)$ of $R$-linear continuous morphisms from $M$ to $N$ is a $P_{A}$-module for the action defined by
\[
\forall \psi \in \text{Hom}_{R\text{-cont}}(M,N), ~\forall a,b \in A, \forall x \in M ~(a\otimes b)\cdot \psi(x)=a \psi(bx).
\]
\end{rem}

\begin{prop}\label{dualité}
Assume that $A$ is strongly $\eta$-convergent. Let \( M \) and \( N \) be two \( A \)-modules of finite type. The set \( \emph{Diff}^{(\eta)}_{\underline{\sigma}}(M, N) \) is a \( P_A \)-submodule of \( \emph{Hom}_{R\emph{-cont}}(M, N) \), which contains \( \emph{Diff}_{\underline{\sigma}}^{(\infty)}(M, N) \). Moreover, we have the following isomorphism
\[
\emph{Hom}_{A\emph{-cont}}(\aff \otimes'_A M, N) \simeq \emph{Diff}^{(\eta)}_{\underline{\sigma}}(M, N).
\]
\end{prop}

\begin{proof}
By definition and Lemma \ref{9.19}, we have the isomorphism 
\[
\text{Hom}_{A\text{-cont}}(\aff \otimes'_A M, N) \simeq \text{Diff}^{(\eta)}_{\underline{\sigma}}(M, N).
\]
Also, since the $P_A$-module structure on $\aff$ is defined by $\tilde{\theta}_\eta:P_A \rightarrow\aff$, the morphism
\[
\text{Hom}_{A\text{-cont}}(\aff \otimes'_A M, N) \rightarrow \text{Hom}_{A\text{-cont}}(P_A \otimes'_A M, N) = \text{Hom}_{R\text{-cont}}(M,N)
\]
is $P_A$-linear. So $\text{Diff}^{(\eta)}_{\underline{\sigma}}(M, N)$ is a $P_A$-submodule of $\text{Hom}_{R\text{-cont}}(M,N)$. Furthermore, it contains $\text{Diff}^{(\infty)}_{\underline{\sigma}}(M, N)$, since we have the canonical surjection $\aff \rightarrow P_{A, (n)_{\underline{\sigma}}}$ for every natural number $n$.
\end{proof}

We define the ring $\aff (1)$ by
\[
\aff (1) := \left\lbrace \sum_{\underline{i}, \underline{j} \in \N^d} a_{\underline{i}, \underline{j}} (\underline{\xi}^{\underline{i}} \otimes' \underline{\xi}^{\underline{j}}), \lVert a_{\underline{i}, \underline{j}} \rVert \eta^{|\underline{i}|+|\underline{j}|} \rightarrow 0 \text{ when } |\underline{i}|+|\underline{j}| \rightarrow \infty \right\rbrace.
\]
and define the norm on it by
\[
\left\lVert  \sum_{\underline{i}, \underline{j} \in \N^d} a_{\underline{i}, \underline{j}} (\underline{\xi}^{\underline{i}} \otimes' \underline{\xi}^{\underline{j}}) \right\rVert_\eta := \sup_{\underline{i}, \underline{j} \in \N^d} \lVert a_{\underline{i}, \underline{j}} \rVert \eta^{|\underline{i}|+|\underline{j}|}.
\]
As in the proof of Lemma \ref{bla}, one can prove that the map
\[
\aff (1) \rightarrow \aff (1), ~ \underline{\xi}^{\underline{i}} \otimes' \underline{\xi}^{\underline{j}} \mapsto \underline{\xi}^{(\underline{i})_{\underline{\sigma}}} \otimes' \underline{\xi}^{(\underline{j})_{\underline{\sigma}}}
\]
is an isometry and thus $\lbrace \underline{\xi}^{(\underline{i})_{\underline{\sigma}}} \otimes' \underline{\xi}^{(\underline{j})_{\underline{\sigma}}}, \underline{i}, \underline{j} \in \N^d \rbrace$ forms an orthogonal Schauder basis. Thus, we can write the ring $\aff (1)$ and the norm on it also as
\[
\aff (1) := \left\lbrace \sum_{\underline{i}, \underline{j} \in \N^d} a_{\underline{i}, \underline{j}} (\underline{\xi}^{(\underline{i})_{\underline{\sigma}}} \otimes' \underline{\xi}^{(\underline{j})_{\underline{\sigma}}}), \lVert a_{\underline{i}, \underline{j}} \rVert \eta^{|\underline{i}|+|\underline{j}|} \rightarrow 0 \text{ when } |\underline{i}|+|\underline{j}| \rightarrow \infty \right\rbrace,
\]
\[
\left\lVert  \sum_{\underline{i}, \underline{j} \in \N^d} a_{\underline{i}, \underline{j}} (\underline{\xi}^{(\underline{i})_{\underline{\sigma}}} \otimes' \underline{\xi}^{(\underline{j})_{\underline{\sigma}}}) \right\rVert_\eta := \sup_{\underline{i}, \underline{j} \in \N^d} \lVert a_{\underline{i}, \underline{j}} \rVert \eta^{|\underline{i}|+|\underline{j}|}.
\]
We have the canonical continuous homomorphism
\[
p_1: \aff \rightarrow \aff (1), ~\sum_{\underline{i}  \in \N^d} a_{\underline{i}} \underline{\xi}^{(\underline{i})_{\underline{\sigma}}} \mapsto \sum_{\underline{i} \in \N^d} a_{\underline{i}} \underline{\xi}^{(\underline{i})_{\underline{\sigma}}} \otimes' 1,\]
and if $A$ is strongly $\eta$-convergent, we have another continuous homomorphism
\[
p_2: \aff \rightarrow \aff (1), ~ \sum_{\underline{i}  \in \N^d} a_{\underline{i}} \underline{\xi}^{(\underline{i})_{\underline{\sigma}}} \mapsto \sum_{\underline{i} \in \N^d} \theta_\eta(a_{\underline{i}}) (1\otimes' \underline{\xi}^{(\underline{i})_{\underline{\sigma}}} ).
\]
So, if $A$ is strongly $\eta$-convergent, these induce the diagram
\[
P_A \otimes_A' P_A \rightarrow \aff \otimes_A' \aff \rightarrow\aff(1).
\]
Also, for $m,n \in \N$, we have a surjective homomorphism
\begin{align*}  
\aff (1) &\rightarrow P_{A,(m)_{\underline{\sigma}}} \otimes_A' P_{A,(n)_{\underline{\sigma}}}\\ 
\sum_{\underline{i}, \underline{j} \in \N^d} a_{\underline{i}, \underline{j}} (\underline{\xi}^{(\underline{i})_{\underline{\sigma}}} \otimes' \underline{\xi}^{(\underline{j})_{\underline{\sigma}}})  &\mapsto \sum_{|\underline{i}| \leq n,  |\underline{j}| \leq m} a_{\underline{i}, \underline{j}} (\underline{\xi}^{(\underline{i})_{\underline{\sigma}}} \otimes' \underline{\xi}^{(\underline{j})_{\underline{\sigma}}}) 
\end{align*}
such that the limit
\[
\aff (1) \rightarrow \varprojlim_{m,n} (P_{A,(m)_{\underline{\sigma}}} \otimes_A' P_{A,(n)_{\underline{\sigma}}} ) 
\]
is injective.

\begin{prop}\label{4.5}
Suppose that $A$ is strongly $\eta$-convergent. Then there exists a unique continuous ring homomorphism
\[
\delta_\eta:\aff \rightarrow\aff (1)
\]
such that the following diagram commutes for all $m,n \in \N$:
\begin{equation}\label{rightsquare}
\xymatrix{
P_{A} \ar[r] \ar[d]^-{\delta} & \aff \ar[r] \ar[d]^-{\delta_\eta} & P_{A,(m+n)_{\underline{\sigma}}} \ar[d]^-{\delta_{m,n}}  \\
P_{A} \otimes_A' P_{A} \ar[r] & \aff(1) \ar[r] & P_{A,(m)_{\underline{\sigma}}} \otimes'_A P_{A,(n)_{\underline{\sigma}}} 
}
\end{equation}
Moreover, $\delta_\eta$ is a morphism of Huber $R$-algebras of norm $1$.
\end{prop}

\begin{proof}
If such a ring homomorphism $\delta_\eta$ exists, by taking the limit, we obtain the commutative diagram
\[
\xymatrix{
\aff \ar[d]^{\delta_\eta} \ar[r]  & \varprojlim_{n} P_{A,(n)_{\underline{\sigma}}} \ar[d] \\
\aff (1) \ar[r]&  \varprojlim_{m,n} (P_{A,(m)_{\underline{\sigma}}} \otimes_A P_{A,(n)_{\underline{\sigma}}}).
}
\]
Then the uniqueness of the map follows from the injectivity of horizontal maps (Proposition \ref{tenseurschauder} and the claim before the statement of this proposition).

Next, we prove the existence. We define the ring homomorphism $\delta_\eta: A[\underline{\xi}] \rightarrow \aff (1)$ by $\xi_i \mapsto 1 \otimes'\xi_i+\xi_i \otimes1.$ Since it has norm 1, it extends to the ring homomorphism $\delta_\eta : \aff \rightarrow \aff(1)$ of norm 1. The commutativity of the right square in (\ref{rightsquare}) follows from the definition. Also, since the bottom arrow of the above diagram is injective, to prove the commutativity of the left square of \eqref{rightsquare}, it suffices to prove the commutativity of the big rectangle. This follows from the assumption that the coordinates $\underline{x}$ are symmetric (see also the proof of Proposition \ref{symcom}). So the proof is finished.
\end{proof}

\begin{prop}\label{multiplicationf}
Suppose that $A$ is strongly $\eta$-convergent.
Then the $A$-module
$\operatorname{Hom}_{A\text{-cont}}(\aff, A)$
is an $R$-algebra for the multiplication defined by
\[
\psi \phi :
\aff \xrightarrow{\delta_\eta}
\aff(1) \xrightarrow{\phi(1)}
\aff \xrightarrow{\psi} A,
\]
where $\phi(1)$ is the map
\[
\sum_{\underline{i},\underline{j} \in \mathbb{N}^d}
a_{\underline{i},\underline{j}}
\bigl(
\underline{\xi}^{(\underline{i})_{\underline{\sigma}}}
\otimes'
\underline{\xi}^{(\underline{j})_{\underline{\sigma}}}
\bigr)
\longmapsto
\sum_{\underline{i},\underline{j} \in \mathbb{N}^d}
a_{\underline{i},\underline{j}}\,
\theta_\eta\!\bigl(
\phi(\underline{\xi}^{(\underline{j})_{\underline{\sigma}}})
\bigr)\,
\underline{\xi}^{(\underline{i})_{\underline{\sigma}}}.
\]
Moreover, if $A$ is very strongly $\eta$-convergent, it is a Banach $R$-algebra.
\end{prop}

\begin{proof}
Note that $\phi(1)$ is well-defined since
\[
\lVert a_{i,j}\,
\theta_\eta(\phi(\xi^{(j)_\sigma}))\,
\xi^{(i)_\sigma} \rVert
\le
\lVert \theta_\eta \rVert
\lVert \phi \rVert
\lVert a_{i,j} \rVert
\eta^{|i|+|j|}
\to 0
\quad \text{when } |i|+|j| \to \infty.
\]
Then it is straightforward to check that the multiplication in the
statement defines an $R$-algebra structure on
$\operatorname{Hom}_{A\text{-cont}}(\aff, A)$.
Also, if $A$ is very strongly $\eta$-convergent,
$\theta_\eta : A \to \aff$ is an isomorphic injection.
So $\lVert \theta_\eta \rVert = 1$ and the above inequality shows that
$\lVert \phi(1) \rVert \le \lVert \phi \rVert$.
On the other hand, if $A$ is very strongly $\eta$-convergent,
$p_2 : \aff \to \aff(1)$ is also an isometric injection. Also, a direct computation shows the equality $\phi(1)\circ p_2 = \theta_\eta \circ \phi.$ Thus we have for any $0 \neq s \in \aff$ the inequality
\[
\frac{\lVert \phi(s) \rVert}{\lVert s\rVert} = \frac{\lVert \theta_\eta\circ \phi(s) \rVert }{\lVert s \rVert} = \frac{\lVert \phi(1) \circ p_2 (s)\rVert }{\lVert s \rVert} \leq \lVert \phi(1)\rVert \frac{\lVert p_2(s) \rVert}{\lVert s \rVert}=\lVert \phi(1) \rVert.
\]
Thus we also have $\lVert \phi\rVert \leq \lVert \phi(1) \rVert$ and so $\lVert \phi\rVert =\lVert \phi(1) \rVert$. So
\[
\rVert \psi \phi \rVert \leq \lVert \psi\rVert \lVert \phi(1) \rVert \lVert \delta_\eta \rVert = \lVert \psi \rVert \lVert \phi \rVert.
\]
Hence $\text{Hom}_{A\text{-cont}}(\aff,A)$ is a Banach $R$-algebra.
\end{proof}

\begin{defn}
    For $A$-modules $M,N$ of finite type, we define the norm $\lVert \varphi \rVert_\eta$ of $\varphi \in \text{D}_{\underline{\sigma}}^{(\eta)}(M,N)$ by $\lVert \varphi \rVert_\eta :=\lVert \tilde{\varphi}_\eta\rVert.$
\end{defn}

\begin{prop}\label{9.25}
Suppose $A$ is very strongly $\eta$-convergent. The twisted differential operators of radius $\eta$ are stable under composition and $\lVert \varphi \circ \psi \rVert_\eta \leq \lVert \varphi \rVert_\eta \lVert \psi \rVert_\eta$. 
\end{prop}

\begin{proof}
Let $\varphi:M \rightarrow N$ and $\psi:L \rightarrow M$ be two twisted differential operators of radius $\eta$. The diagram below commutes
\[
\xymatrix{
P_{A} \otimes_A' L \ar[r]^-\delta \ar[d] & P_{A} \otimes_A'P_{A} \otimes_A'L \ar[d] \ar[r]^-{\text{Id} \otimes' \tilde{\psi}} & P_{A} \otimes_A' M \ar[r]^-{\tilde{\phi}} \ar[d] & N \ar[d] \\
\aff \otimes_A' L \ar[r]^-{\delta_\eta} & \aff(1) \otimes_A'L  \ar[r]^-{\tilde{\psi}(1)} & \aff \otimes_A' M \ar[r]^-{\tilde{\phi}_\eta}  & N,
}
\]
where $\tilde{\psi}(1)$ is defined by
\[
\left(\sum_{\underline{i}, \underline{j} \in \N^d} a_{\underline{i}, \underline{j}} (\underline{\xi}^{(\underline{i})_{\underline{\sigma}}} \otimes' \underline{\xi}^{(\underline{j})_{\underline{\sigma}}})\right) \otimes'l \rightarrow \sum_{\underline{i}, \underline{j} \in \N^d} a_{\underline{i}, \underline{j}} \underline{\xi}^{(\underline{i})_{\underline{\sigma}}} \otimes' \psi(\underline{\xi}^{(\underline{j})_{\underline{\sigma}}}\otimes'l)
\]
and the top row represents $\widetilde{\phi \circ \psi}$. Thus, twisted differential operators of radius $\eta$ are stable by composition.
By submultiplicativity of the norm, the following inequality is satisfied
\[
\lVert \phi \circ \psi \rVert_\eta \leq \lVert \widetilde{(\phi \circ \psi )}_\eta \rVert \leq \lVert \tilde{\phi}_\eta \rVert \lVert \tilde{\psi}(1) \rVert \lVert \delta_\eta \rVert = \lVert \tilde{\phi}_\eta \rVert \lVert  \tilde{\psi}_\eta \rVert=\lVert \phi \rVert_\eta \lVert \psi \rVert_\eta. 
\]
Here, the second last equality follows from the equality $\lVert \tilde{\psi}_\eta(1)\rVert =\lVert \tilde{\psi}_\eta \rVert$, which we can prove in the same way as the proof of Proposition \ref{multiplicationf}. So the claims hold.
\end{proof}

\begin{prop}\label{commu}
Let \( M \) be an \( A \)-module of finite type, and let \( \theta_\eta : M \rightarrow M \otimes_A \aff \) be an \( A \)-linear map with respect to the right structure of \( \aff \). Then \( \theta_\eta \) is a twisted Taylor map of radius \( \eta \) if and only if the following diagram commutes
\begin{equation}\label{9.14}
\xymatrix{
M \ar[r]^-{\theta_\eta} \ar[d]_-{\theta_\eta} & M \otimes_A \aff \ar[d]^-{\theta_\eta (1)} \\
M \otimes_A \aff \ar[r]_-{\emph{Id}\otimes \delta_\eta} & M \otimes_A \aff(1),
}
\end{equation}
where $\theta_\eta(1)$ is the map induced by the map
\[
M \times \aff \rightarrow M \otimes_A \aff(1), ~(m,f) \mapsto p_2(f)((\emph{Id} \otimes p_1) \circ \theta_\eta(m)).
\]
\end{prop}

\begin{proof}
Assume that the module \( M \) is endowed with a twisted Taylor map of radius \( \eta \). By Proposition \ref{3.3}, the twisted Taylor map $\widehat{\theta} = \varprojlim_n \theta_n$ factorizes through \( M \otimes_A \aff \) via the map \( \theta_\eta \). Then, the commutative diagram in Definition \ref{twistedtaylorstructure} and the commutative diagram in the proof of Proposition \ref{4.5} imply the commutativity of the diagram (\ref{9.14}), allowing us to conclude. Conversely, if the diagram (\ref{9.14}) commutes, we can follow the same reasoning to deduce that \( \theta_\eta \) is a twisted Taylor map of radius \( \eta \).
\end{proof}

\begin{defn}\label{ring of twisted differential operators of radius eta}
Assume $A$ is strongly $\eta$-convergent. Then the
\emph{ring of twisted differential operators of radius $\eta$} is
\[
\text{D}^{(\eta)}_{\underline{\sigma}}:=\text{Diff}^{(\eta)}_{A,\underline{\sigma}}(A,A).
\]
\end{defn}

\begin{cor}
There exists a canonical isomorphism of Banach $A$-modules
\[
\emph{D}^{(\eta)}_{\underline{\sigma}} \simeq \emph{Hom}_{A\emph{-cont}}(\aff,A).
\]
\end{cor}

\begin{rem}
For \( n \in \mathbb{N} \), we denote by \( K^{[n]} \) the $A$-submodule generated by \( \lbrace \partial_{\underline{\sigma}}^{[\underline{k}]} ,  |\underline{k}| \geq n \rbrace\), and define
\[
\widehat{\text{D}}_{\underline{\sigma}}^{(\infty)} = \varprojlim \text{D}^{(\infty)}_{A, \underline{\sigma}} / K^{[n]}.
\]
In general, \( \widehat{\text{D}}_{\underline{\sigma}}^{(\infty)} \) is not a ring. However, we have the following description
\[
\widehat{\text{D}}_{\underline{\sigma}}^{(\infty)} := \left\lbrace \sum_{\underline{k} \in \mathbb{N}^d} z_{\underline{k}} \partial_{\underline{\sigma}}^{[\underline{k}]}, ~ z_{\underline{k}} \in A \right\rbrace.
\]
By duality, the commutative diagram (\ref{diagram}) induces the following sequence of inclusions
\[
\text{D}^{(\infty)}_{\underline{\sigma}} \rightarrow \text{D}^{(\eta)}_{\underline{\sigma}} \rightarrow \widehat{\text{D}}_{\underline{\sigma}}^{(\infty)}.
\]
The right inclusion map is explicitly given by:
\[
\varphi \mapsto \sum_{\underline{k} \in \mathbb{N}^d} \tilde{\varphi}_\eta \left(\underline{\xi}^{(\underline{k})_{\underline{\sigma}}}\right) \partial_{\underline{\sigma}}^{[\underline{k}]}.
\]
\end{rem}

\begin{prop}\label{9.31}
Assume that $A$ is strongly $\eta$-convergent. The injective map $\emph{D}^{(\eta)}_{A,\underline{\sigma}} \rightarrow \widehat{\emph{D}}^{(\infty)}_{A,\underline{\sigma}}$ induces an isometric isomorphism of Banach $A$-modules
\[
\emph{D}^{(\eta)}_{\underline{\sigma}} \rightarrow \left\lbrace 
\sum_{\underline{k} \in \mathbb{N}^d} z_{\underline{k}} \der, ~ \exists C >0,  ~\forall\underline{k} \in \mathbb{N}^d, ~\lVert z_{\underline{k}} \rVert \leq C \eta^{|\underline{k}|}
\right\rbrace
\]
for the sup norm
\[
\left\lVert \sum_{\underline{k} \in \mathbb{N}^d} z_{\underline{k}} \der \right\rVert_\eta=\sup \left\lbrace \left\lVert z_{\underline{k}} \right\rVert / \eta^{|\underline{k}|} \right\rbrace.
\]
\end{prop}

\begin{proof}
Let \( \varphi \in \text{D}^{(\eta)}_{\underline{\sigma}} \). Since \( \tilde{\varphi}_\eta \) is continuous, for all \( \underline{k} \in \mathbb{N}^d \), we have:
\[
\left\lVert \tilde{\varphi}_\eta \left( \underline{\xi}^{(\underline{k})_{\underline{\sigma}}} \right) \right\rVert \leq \left\lVert \tilde{\varphi}_\eta \right\rVert \left\lVert \underline{\xi}^{(\underline{k})_{\underline{\sigma}}} \right\rVert = \left\lVert \tilde{\varphi}_\eta \right\rVert \eta^{|\underline{k}|}.
\]
Then the injection map \( \text{D}^{(\eta)}_{\underline{\sigma}} \rightarrow \widehat{\text{D}}^{(\infty)}_{\underline{\sigma}} \) has its image contained in the considered set, and its norm is at most 1.

Now, consider an element \( \sum_{\underline{k} \in \mathbb{N}^d} w_{\underline{k}} \partial_{\underline{\sigma}}^{[\underline{k}]} \) with norm at most \( C \) on the right-hand side. By Proposition \ref{dualité}, there exists a unique \( \varphi \in \text{D}^{(\eta)}_{\underline{\sigma}} \) such that, for every sequence  \( (z_{\underline{k}})_{\underline{k} \in \mathbb{N}^d} \) of elements of \( A \) satisfying \( \lVert z_{\underline{k}} \rVert \eta^{|\underline{k}|} \rightarrow 0 \), 
\[
\tilde{\varphi}_\eta\left( \sum_{\underline{k} \in \mathbb{N}^d} z_{\underline{k}} \underline{\xi}^{(\underline{k})_{\underline{\sigma}}} \right) = \sum_{\underline{k} \in \mathbb{N}^d} z_{\underline{k}} w_{\underline{k}} \in A,
\]
which follows from the fact that \( \lVert z_{\underline{k}} w_{\underline{k}} \rVert \leq C \lVert z_{\underline{k}} \rVert \eta^{|\underline{k}|} \rightarrow 0 \). This also implies that \( \lVert \varphi \rVert \leq C \). Therefore, this defines an inverse map, which is indeed an isometry.
\end{proof}

\begin{prop}\label{5.5}
Assume that $A$ is strongly $\eta$-convergent. The structure of $\emph{D}_{\underline{\sigma}}^{(\infty)}$-module over a $\eta$-convergent module of finite type over $A$ extends canonically to a structure of topological $\emph{D}^{(\eta)}_{\underline{\sigma}}$-module.
\end{prop}

\begin{proof}
Consider the \( \aff \)-linear continuous map
\[
M \otimes_A \aff \rightarrow \text{Hom}_{A\text{-cont}}(\text{Hom}_{A\text{-cont}}(\aff, A), M), ~s \otimes f \mapsto (\psi \mapsto \psi(f) s).
\]
The twisted Taylor map of radius \( \eta \) can be linearized to obtain a continuous map
\[
\aff \otimes'_A M \rightarrow M \otimes_A \aff.
\]
By composition, we obtain an \( \aff \)-linear continuous map:
\[
\aff \otimes'_A M \rightarrow \text{Hom}_{A\text{-cont}}(\text{D}^{(\eta)}_{\underline{\sigma}}, M),
\]
or equivalently, an \( \aff \)-linear continuous map
\[
\text{D}^{(\eta)}_{\underline{\sigma}} \rightarrow \text{Hom}_{A\text{-cont}}(\aff \otimes'_A M, M) = \text{D}^{(\eta)}_{\underline{\sigma}}(M, M).
\]
One can check that this is a morphism of rings by Proposition \ref{commu}. By construction, this map is compatible with the map
\[
\text{D}^{(\infty)}_{\underline{\sigma}} \rightarrow \text{D}^{(\infty)}_{\underline{\sigma}}(M, M)
\]
which defines the action of \( \text{D}^{(\infty)}_{\underline{\sigma}} \) on \( M \).
\end{proof}

\begin{prop}\label{5.6}
Suppose $A$ is strongly $\eta$-convergent.
If $M$ is a $\emph{D}^{(\eta)}_{\underline{\sigma}}$-module of finite type over $A$ which is $\eta$-convergent, then $M$ is $\eta^\dagger$-convergent.
\end{prop}

\begin{proof}
We need to show that \( M \) is \( \eta' \)-convergent for every \( \eta' < \eta \). Since the map
\[
\text{D}^{(\eta)}_{\underline{\sigma}} \rightarrow \text{Hom}_{A\text{-cont}}(\aff \otimes_A' M, M)
\]
is continuous, there exists a constant \( C \) such that for any $s \in M$ and \( \varphi \in \text{D}^{(\eta)}_{\underline{\sigma}} \),
\[
\lVert \tilde{\varphi}(1\otimes s) \rVert \leq C \lVert \varphi \rVert_\eta \lVert s \rVert.
\]
Considering the case $\varphi = \partial_{\underline{\sigma}}^{[\underline{k}]}$, we obtain the inequality 
\[
\lVert \partial_{\underline{\sigma}}^{[\underline{k}]} (s) \rVert \leq \frac{C \lVert s \rVert}{\eta^{|\underline{k}|}}.
\]
This implies that
\[
\lVert \partial_{\underline{\sigma}}^{[\underline{k}]} (s) \rVert \eta'^{|\underline{k}|} \leq C \lVert s \rVert \left(\frac{\eta'^{|\underline{k}|}}{\eta^{|\underline{k}|}}\right) \rightarrow 0.
\]
Thus, we have shown that the module \( M \) is \( \eta' \)-convergent for every \( \eta' < \eta \).
\end{proof}

\begin{prop}\label{6.1}
Let \( \underline{\tau}=(\tau_1, \ldots, \tau_d) \) be another family of \( R \)-linear continuous endomorphisms of \( A \) that commute. Assume that \( \underline{x} \) are classical and symmetric (good) \( \underline{\tau} \)-coordinates, and that \( \eta \geq \rho(\underline{\tau}) \). Then, if \( A \) is \( \eta \)-convergent with respect to \( \underline{\sigma} \), it is also \( \eta \)-convergent with respect to \( \underline{\tau} \), with the same twisted Taylor morphism as for \( \underline{\sigma} \). In particular, if $A$ is strongly $\eta$-convergent with respect to $\underline{\sigma}$, it is also strongly $\eta$-convergent with respect to $\underline{\tau}$.
\end{prop}

\begin{proof}
We can consider the commutative diagram below
\[
\xymatrix{
& & A \ar[ld]_-\theta \ar[d]^-{\theta_\eta} \\
A[\underline{\xi}] \ar@{^{(}->}[r] \ar@{->>}[d]_-{\Theta_{A, (n)_{\underline{\tau}}}} & P_{A} \ar@{->>}[d] \ar[r] & \aff \ar@{->>}[d] \\
A[\underline{\xi}]/(\underline{\xi}^{(\underline{k})_{\underline{\tau}}}; ~|\underline{k}|= n+1)\ar[r]^{~~~~~~~~~~~\simeq} \ar@/_2pc/[rr] & P_{A,(n)_{\underline{\tau}}} &    \aff / \left(\underline{\xi}^{(\underline{k})_{\underline{\tau}}}; ~|\underline{k}|= n+1\right).
}
\]
~\\
~\\
Here, $\theta_\eta$ is the twisted Taylor map of radius $\eta$ with respect to $\underline{\sigma}$. Since the diagram is commutative, the composition of the vertical maps on the right does not depend on $\underline{\sigma}$. Moreover, by hypothesis $ \eta \geq \rho (\underline{\tau}) $, the map
\[
A[\underline{\xi}]/\left(\underline{\xi}^{(\underline{k})_{\underline{\tau}}}; ~|\underline{k}|= n+1\right) \rightarrow \aff / \left(\underline{\xi}^{(k)_{\underline{\tau}}}; ~|\underline{k}|= n+1 \right)
\]
is an isomorphism by Proposition \ref{pr}. Taking the limit, we obtain the following diagram
\[
\xymatrix{
& A \ar[d]^-{\theta_\eta} \ar[ld]_-\theta \\
P_{A} \ar[d] & \aff \ar[ld] \\
\varprojlim_n  P_{A,(n)_{\underline{\tau}}}.
}
\]
By Proposition \ref{3.3}, it follows that $A$ is $\eta$-convergent with respect to $\underline{\tau}$, and  $\theta_\eta$ is the twisted Taylor map of radius $\eta$ with respect to $\underline{\tau}$.
\end{proof}

\begin{prop}
The ring structure of $\emph{Hom}_{A\emph{-cont}}(\aff,A)$ does not depend on $\underline{\sigma}$.
\end{prop}

\begin{proof}
This result follows from the multiplication defined on \( \text{Hom}_{A\text{-cont}}(A\left\lbrace \underline{\xi}/\eta \right\rbrace, A) \), from the diagram in the proof of Proposition \ref{9.25}, and from Proposition \ref{6.1}, which ensures that the ring structure is independent of \( \underline{\sigma} \).
\end{proof}

\begin{thm}\label{6.3}
Let \( \underline{\tau} =(\tau_1, \ldots, \tau_d) \) be another family of \( R \)-linear continuous endomorphisms of \( A \) that commute. Assume that \( \underline{x} \) are classical and symmetric (good) \( \underline{\tau} \)-coordinates, and that \( \eta \geq \rho(\underline{\tau}) \). Then, there exists an isometric \( A \)-linear isomorphism of \( R \)-algebras
\[
\emph{D}^{(\eta)}_{\underline{\sigma}} \simeq \emph{D}^{(\eta)}_{\underline{\tau}},
\]
which only depends on \( \underline{x} \).
\end{thm}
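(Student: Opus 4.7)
The plan is to identify both algebras with the same object $\mathrm{Hom}_{A\text{-cont}}(\aff,A)$. By the corollary preceding the theorem we have isometric $A$-linear isomorphisms
\[
\mathrm{D}^{(\eta)}_{\underline{\sigma}} \simeq \mathrm{Hom}_{A\text{-cont}}(\aff,A) \quad \text{and} \quad \mathrm{D}^{(\eta)}_{\underline{\tau}} \simeq \mathrm{Hom}_{A\text{-cont}}(\aff,A),
\]
where $\aff = A\left\lbrace \underline{\xi}/\eta \right\rbrace$ depends only on $A$, $\underline{\xi}$ and $\eta$, not on the chosen twists. Thus at the level of $A$-modules the identification is immediate, and it only remains to check that the ring structures and the $A$-module structures involved agree.

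The key input is Proposition \ref{6.1}: under our hypotheses the twisted Taylor map of radius $\eta$ for $(A,\underline{\sigma})$ coincides with the one for $(A,\underline{\tau})$. Consequently the \emph{right} $A$-module structure on $\aff$ induced by $\theta_\eta$ is the same in both cases, so the tensor products $\aff \widehat{\otimes}_A' \aff$ appearing in the multiplication of Proposition \ref{multiplicationf} are identical. The next step is to invoke the preceding proposition stating that the ring structure on $\mathrm{Hom}_{A\text{-cont}}(\aff,A)$ is independent of $\underline{\sigma}$: this gives that the two multiplications transported from $\mathrm{D}^{(\eta)}_{\underline{\sigma}}$ and $\mathrm{D}^{(\eta)}_{\underline{\tau}}$ to $\mathrm{Hom}_{A\text{-cont}}(\aff,A)$ coincide, so composing the two isomorphisms above yields an $R$-algebra isomorphism $\mathrm{D}^{(\eta)}_{\underline{\sigma}} \simeq \mathrm{D}^{(\eta)}_{\underline{\tau}}$.

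For the isometry assertion, the norm on $\mathrm{D}^{(\eta)}_{\underline{\sigma}}$ is by definition the operator norm of the $\eta$-linearization $\widetilde{\varphi}_\eta : \aff \to A$, and the same holds for $\underline{\tau}$. Since the underlying Banach $A$-module $\mathrm{Hom}_{A\text{-cont}}(\aff,A)$ is literally the same normed space in both cases, the composed isomorphism is automatically an isometry.

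The only subtlety I anticipate is making sure that the dependence on $\underline{\sigma}$ really disappears: one must verify that the map $\delta_\eta$ of Proposition \ref{4.5}, which a priori is defined with respect to $\underline{\sigma}$, admits the same formula $\xi_i \mapsto \xi_i \otimes' 1 + 1 \otimes' \xi_i$ in both setups, and that the primed tensor products truly coincide as topological $A$-bimodules. This reduces precisely to the equality of the Taylor maps provided by Proposition \ref{6.1} and, more concretely, to the fact that the Schauder basis $\{\underline{\xi}^{\underline{n}}\}_{\underline{n} \in \mathbb{N}^d}$ (rather than the twisted basis) can be used as a common reference, since $\eta \geq \max(\rho(\underline{\sigma}),\rho(\underline{\tau}))$ guarantees by Proposition \ref{Schauder} that both twisted bases generate the same Banach $A$-module.
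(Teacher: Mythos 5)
Your proof is correct and follows essentially the same route as the paper: identify both $\mathrm{D}^{(\eta)}_{\underline{\sigma}}$ and $\mathrm{D}^{(\eta)}_{\underline{\tau}}$ with the Banach $A$-module $\mathrm{Hom}_{A\text{-cont}}(\aff,A)$ via the preceding corollary, invoke the proposition that the ring structure on this object is independent of the chosen twist, and observe that the underlying normed $A$-module is literally the same so the composed isomorphism is $A$-linear and isometric. Your extra discussion of $\delta_\eta$ and the common Schauder basis simply unpacks why that independence proposition holds (it rests on Proposition \ref{6.1}), which the paper leaves implicit.
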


\begin{proof}
This result follows directly from the previous proposition and the fact that the isomorphism $\text{D}^{(\eta)}_{\underline{\sigma}} \simeq \text{Hom}_{A\text{-cont}}(A\left\lbrace \underline{\xi}/\eta \right\rbrace, A)$ is \( A \)-linear.
\end{proof}

\begin{rem}
In particular, it is possible to apply this theorem in the case where $\forall i=1, \ldots, d,~\tau_i=\text{Id}_A$ and $\underline{x}$ are good étale coordinates, thanks to Proposition \ref{bonsigmacord} and usual differential calculus.
\end{rem}

\begin{cor}\label{6.4}
If \( \underline{x} \) are symmetric (good) $\underline{\emph{Id}}$-coordinates as well as classical symmetric (good) $\underline{\sigma}$-coordinates, then there exists an isometric \( A \)-linear isomorphism of \( R \)-algebras:
\[
\emph{D}^{(\eta)}_{\underline{\sigma}} \simeq \emph{D}^{(\eta)}_{\underline{\emph{Id}}},
\]
where \( \emph{D}^{(\eta)}_{\underline{\emph{Id}}} \) denotes the ring of twisted differential operators of radius \( \eta \) with respect to \( \underline{\emph{Id}} = \left( \emph{Id}_A, \ldots, \emph{Id}_A \right) \).
\end{cor}

\subsection{Confluence}
From now on, we fix a $\eta > \rho(\underline{\sigma})$ such that $A$ is strongly $\eta^\dagger$-convergent with respect to $\underline{\sigma}$.

\begin{defn}\label{operators of radius eta dag}
The ring of \emph{twisted differential operators of radius $\eta^\dagger$} is defined as
\[
\text{D}_{\underline{\sigma}}^{(\eta^\dagger)}=\varinjlim_{\eta'<\eta} \text{D}_{\underline{\sigma}}^{(\eta')}.
\]
\end{defn}

\begin{prop}\label{7.2}
The category of $\emph{D}_{\underline{\sigma}}^{(\eta^\dagger)}$-modules of finite type over $A$ is equivalent to the subcategory of $\emph{D}^{(\infty)}_{\underline{\sigma}}$-modules $\eta^\dagger$-convergent of finite type over $A$.
\end{prop}

\begin{proof}
It is enough to apply Propositions \ref{5.5} and \ref{5.6}.
\end{proof}

\begin{thm}\label{7.3}
Let \( \underline{\tau}= (\tau_1, \ldots, \tau_d) \) be another family of \( R \)-linear continuous endomorphisms of \( A \) such that \( \underline{x} \) are also classical symmetric (good) \( \underline{\tau} \)-coordinates. If \( \eta > \rho(\underline{\tau}) \) and $A$ is strongly $\eta^\dagger$-convergent with respect to $\underline{\tau}$, then the categories of \( \emph{D}^{(\infty)}_{\underline{\sigma}} \)-modules of finite type over \( A \) which are also strongly \( \eta^\dagger \)-convergent with respect to \( \underline{\sigma} \) and \( \underline{\tau} \) are equivalent.
\end{thm}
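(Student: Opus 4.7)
The plan is to reduce the theorem to a statement about rings of twisted differential operators of radius $\eta^\dagger$, and then apply Theorem \ref{6.3} passing to the direct limit.

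First, I would invoke Proposition \ref{7.2} in both directions: the category of $\text{D}^{(\infty)}_{\underline{\sigma}}$-modules of finite type over $A$ that are $\eta^\dagger$-convergent with respect to $\underline{\sigma}$ is equivalent to the category of $\text{D}^{(\eta^\dagger)}_{\underline{\sigma}}$-modules of finite type over $A$, and similarly for $\underline{\tau}$. Thus the theorem reduces to producing a natural equivalence between the categories of $\text{D}^{(\eta^\dagger)}_{\underline{\sigma}}$-modules and $\text{D}^{(\eta^\dagger)}_{\underline{\tau}}$-modules of finite type over $A$.

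The second step is to construct an isomorphism of $R$-algebras $\text{D}^{(\eta^\dagger)}_{\underline{\sigma}} \simeq \text{D}^{(\eta^\dagger)}_{\underline{\tau}}$ by passing to the colimit. For each $\eta'$ satisfying $\max(\rho(\underline{\sigma}), \rho(\underline{\tau})) \leq \eta' < \eta$ (such $\eta'$ exist since $\eta > \rho(\underline{\sigma})$ and $\eta > \rho(\underline{\tau})$), Theorem \ref{6.3} provides an isometric $A$-linear isomorphism of $R$-algebras $\text{D}^{(\eta')}_{\underline{\sigma}} \simeq \text{D}^{(\eta')}_{\underline{\tau}}$, which depends only on $\underline{x}$. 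Since this isomorphism depends only on $\underline{x}$ and not on $\eta'$, the system is compatible with the transition maps $\text{D}^{(\eta'')}_{\underline{\sigma}} \to \text{D}^{(\eta')}_{\underline{\sigma}}$ for $\eta' \leq \eta''$. Taking the direct limit over $\eta' \to \eta^-$, and using Definition \ref{operators of radius eta dag}, yields the desired ring isomorphism
\[
\text{D}^{(\eta^\dagger)}_{\underline{\sigma}} \simeq \text{D}^{(\eta^\dagger)}_{\underline{\tau}}.
\]

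Finally, this isomorphism of $R$-algebras immediately induces an equivalence between the categories of modules of finite type over $A$ on each side, and composing with the equivalences from Proposition \ref{7.2} produces the desired equivalence. The main obstacle I anticipate is the verification of compatibility of the isomorphisms in the direct system: one must carefully check that the $A$-linear isometries produced by Theorem \ref{6.3} for different choices of $\eta'$ commute with the natural restriction maps between rings of operators of different radii. This however is essentially built into the statement of Theorem \ref{6.3}, since the isomorphism depends only on $\underline{x}$ and is realized through the identification $\text{D}^{(\eta')}_{\underline{\sigma}} \simeq \text{Hom}_{A\text{-cont}}(A\lbrace \underline{\xi}/\eta' \rbrace, A)$, which is visibly functorial in $\eta'$.
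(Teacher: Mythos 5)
Your proposal follows the same route as the paper's own (very terse) proof, which simply cites Theorem \ref{6.3} and Proposition \ref{7.2}; you have merely spelled out the reduction through the $\text{D}_{\underline{\sigma}}^{(\eta^\dagger)}$-module categories, the choice of intermediate radii $\eta'$, and the passage to the colimit that the paper leaves implicit. The additional care you take about compatibility of the isomorphisms with the transition maps is a reasonable elaboration of what Theorem \ref{6.3} already guarantees via the identification with $\text{Hom}_{A\text{-cont}}(A\{\underline{\xi}/\eta'\},A)$.
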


\begin{proof}
This result follows directly from applying Theorem \ref{6.3} and Proposition \ref{7.2}.
\end{proof}

As in \cite{ADV04}, where the \( q \)-analogue of the exponential function is defined by \( \sum\limits_{n \geq 0} \frac{x^n}{(n)_q!} \), we are particularly interested in the case where the \( q \)-analogues of non-zero integers are invertible.

\begin{defn}
Let \( \underline{q} = ( q_1, \ldots, q_d ) \) be elements of \( R \) such that \( \underline{x} \) are classical (good) \( \underline{q} \)-coordinates and
\[
\forall i = 1, \ldots, d, \, \forall n \geq 1, \, (n)_{q_i} \in R^\times.
\]
An \( A \)-module \( M \) of finite type endowed with an integrable twisted connection \( \nabla_{\underline{q}} \) is said to be \emph{\( \eta^\dagger \)-convergent} if its image in the category of \( \text{D}_{\underline{q}}^{(\infty)} \)-modules (see Proposition \ref{equivate 3}) is \( \eta^\dagger \)-convergent. 
\end{defn}

Under the hypothesis of the previous definition, we denote by \( \nabla_{\underline{q}}^{\text{Int}}\text{-}\text{Mod}_{\text{tf}}^{(\eta^\dagger)}(A) \) the category of \( A \)-modules of finite type endowed with an integrable twisted connection that are \( \eta^\dagger \)-convergent. Additionally, we define \( \text{D}_{\underline{q}}^{(\eta^\dagger)}\text{-}\text{Mod}_{\text{tf}}(A) \) as the category of \( \text{D}_{\underline{q}}^{(\eta^\dagger)} \)-modules of finite type over \( A \). The cohomology of this category is given by \( \text{Ext}_{\text{D}_{\underline{q}}^{(\eta^\dagger)}}(A, \cdot) \). We also use the notation
\[
\text{SP}\left(\text{D}_{\underline{q}}^{(\eta^\dagger)}\right) = \text{Hom}_{\text{D}_{\underline{q}}^{(\eta^\dagger)\text{op}}}\left(\text{DR}\left(\text{D}_{\underline{q}}^{(\eta^\dagger)}\right), \text{D}_{\underline{q}}^{(\eta^\dagger)}\right).
\]
Here $\text{DR}(-)$ is as in Remark \ref{7.23}.

\begin{thm}\label{confluence}
Assume there exist elements \( \underline{q} = ( q_1, \ldots, q_d) \) in \( R \) such that \( \underline{x} \) are classical and symmetric (good) \( \underline{q} \)-coordinates, and that every nonzero \( q_i \)-adic integer is invertible in \( R \). Additionally, if \( A \) is strongly \( \eta^\dagger \)-convergent, then we have the following equivalence of categories
\[
\nabla_{\underline{q}}^{\emph{Int}}\emph{-}\emph{Mod}_{\emph{tf}}^{(\eta^\dagger)}(A) \simeq \emph{D}_{\underline{q}}^{(\eta^\dagger)}\emph{-}\emph{Mod}_{\emph{tf}}(A).
\]
This equivalence is compatible with de Rham cohomology on left hand side and $\emph{Ext}$ groups on the right hand side.
\end{thm}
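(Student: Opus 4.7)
The plan is to obtain the equivalence by splicing together the two earlier equivalences and then to establish the cohomological compatibility through a Spencer-type resolution. First I would combine Theorem \ref{equivate 3} with Proposition \ref{7.2} as follows. By hypothesis, every $q_i$-adic integer is invertible in $R$, which is the exact condition required by Theorem \ref{equivate 3}. This gives an equivalence
\[
\nabla_{\underline{\sigma}}^{\text{Int}}\text{-}\text{Mod}(A) \simeq \text{D}_{A,\underline{\sigma}}^{(\infty)}\text{-}\text{Mod}.
\]
Restricting to $A$-modules of finite type on both sides is automatic since the equivalence is $A$-linear. By the very definition of $\eta^\dagger$-convergence for a module with an integrable twisted connection, the restricted equivalence identifies $\nabla_{\underline{\sigma}}^{\text{Int}}\text{-}\text{Mod}_{\text{tf}}^{(\eta^\dagger)}(A)$ with the full subcategory of $\eta^\dagger$-convergent $\text{D}_{A,\underline{\sigma}}^{(\infty)}$-modules of finite type over $A$. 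Proposition \ref{7.2} then identifies the latter with $\text{D}_{\underline{\sigma}}^{(\eta^\dagger)}\text{-}\text{Mod}_{\text{tf}}(A)$, completing the equivalence.

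For the cohomological compatibility I would construct a Spencer-type resolution. Namely, one wants a left resolution of $A$ in the category of $\text{D}_{\underline{\sigma}}^{(\eta^\dagger)}$-modules of the form
\[
\text{Sp}^{\bullet}_{\underline{\sigma}} \;:\; \text{D}_{\underline{\sigma}}^{(\eta^\dagger)} \otimes_A \Omega^{d}_{A,\underline{\sigma}} \longrightarrow \cdots \longrightarrow \text{D}_{\underline{\sigma}}^{(\eta^\dagger)} \otimes_A \Omega^{1}_{A,\underline{\sigma}} \longrightarrow \text{D}_{\underline{\sigma}}^{(\eta^\dagger)} \longrightarrow A \longrightarrow 0
\]
with differentials built from the derivations $\partial_{\underline{\sigma},1},\ldots,\partial_{\underline{\sigma},d}$. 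Since $\Omega^{1}_{A,\underline{\sigma}}$ is free of rank $d$ and, by Proposition \ref{schwarz}, the derivations pairwise commute and commute with the complementary endomorphisms, the Koszul-type differentials are well-defined and the complex squares to zero. Exactness can be checked by filtering $\text{D}_{\underline{\sigma}}^{(\eta^\dagger)}$ by order of differential operators and reducing to the exactness of a standard Koszul complex on the associated graded, much as in classical $\mathcal{D}$-module theory and as in \cite{GLQ4} for the one-variable case.

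Given such a resolution, for any $M$ in $\text{D}_{\underline{\sigma}}^{(\eta^\dagger)}\text{-}\text{Mod}_{\text{tf}}(A)$ one has
\[
\text{R}\text{Hom}_{\text{D}_{\underline{\sigma}}^{(\eta^\dagger)}}(A,M) \simeq \text{Hom}_{\text{D}_{\underline{\sigma}}^{(\eta^\dagger)}}(\text{Sp}^{\bullet}_{\underline{\sigma}},M).
\]
Unfolding the right-hand side using $\text{Hom}_{\text{D}_{\underline{\sigma}}^{(\eta^\dagger)}}(\text{D}_{\underline{\sigma}}^{(\eta^\dagger)} \otimes_A \Omega^{i}_{A,\underline{\sigma}},M) \simeq M \otimes_A \Omega^{i}_{A,\underline{\sigma}}$ (using that $\Omega^{1}_{A,\underline{\sigma}}$ is finite free over $A$) identifies this with the de Rham complex $\text{DR}(M)$ associated with the connection on $M$ under the equivalence established above. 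Thus $\text{Ext}^{\bullet}_{\text{D}_{\underline{\sigma}}^{(\eta^\dagger)}}(A,M)$ coincides with the de Rham cohomology of the corresponding object in $\nabla_{\underline{\sigma}}^{\text{Int}}\text{-}\text{Mod}_{\text{tf}}^{(\eta^\dagger)}(A)$, which is precisely the claimed compatibility.

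The main obstacle is the exactness of $\text{Sp}^{\bullet}_{\underline{\sigma}}$: one must control the $\eta^\dagger$-growth in the Koszul differentials and ensure that the filtration by order of differential operators on $\text{D}_{\underline{\sigma}}^{(\eta^\dagger)}$ behaves well under completion and passage to the direct limit $\varinjlim_{\eta'<\eta}$. Concretely, the associated graded should be identifiable with a polynomial ring over $A$ on symbols dual to $\xi_1,\ldots,\xi_d$, and the Koszul complex on the regular sequence of these symbols is exact; then exactness at the non-graded level follows by a standard spectral sequence argument, provided one verifies that the filtration is exhaustive and Hausdorff after taking the colimit over $\eta'<\eta$. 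This last verification is where the $\eta^\dagger$-convergence hypothesis on $A$ (and the invertibility of the $q_i$-adic integers, via formula \eqref{qderiv}) is essential.
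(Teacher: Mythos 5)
Your treatment of the equivalence of categories is exactly the paper's: combine Theorem \ref{equivate 3} with Proposition \ref{7.2}, and use the invertibility of the $q_i$-adic integers to reconcile $\partial_{\underline{\sigma}}^{\underline{k}}$ with $\partial_{\underline{\sigma}}^{[\underline{k}]}$ via formula \eqref{qderiv}. For the cohomological compatibility your core idea also matches: build a Spencer (Koszul-type) free resolution of $A$ over $\text{D}_{\underline{\sigma}}^{(\eta^\dagger)}$, apply $\text{Hom}_{\text{D}_{\underline{\sigma}}^{(\eta^\dagger)}}(-,M)$, and identify the result with $\text{DR}(M)$. The paper arrives at the exactness of the Spencer complex by asserting that $\partial_{\underline{\sigma},1},\dots,\partial_{\underline{\sigma},d}$ is a regular sequence in $\text{D}_{\underline{\sigma}}^{(\eta^\dagger)}$ and invoking \cite[Proposition 1.4.3]{Schap}.

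The one genuine weak point in your argument is the proposed mechanism for proving exactness of $\text{Sp}^\bullet_{\underline{\sigma}}$. You suggest filtering $\text{D}_{\underline{\sigma}}^{(\eta^\dagger)}$ by order of differential operator and running a spectral sequence on the associated graded. But $\text{D}_{\underline{\sigma}}^{(\eta^\dagger)}$ is a ring of operators of infinite order (its elements are series $\sum_{\underline{k}} z_{\underline{k}} \partial_{\underline{\sigma}}^{[\underline{k}]}$ with $\lVert z_{\underline{k}}\rVert$ growing at most like $\eta^{|\underline{k}|}$), so the filtration by order is not exhaustive: the union of the finite-order pieces is only $\text{D}_{\underline{\sigma}}^{(\infty)}$, which is a proper subring. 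You flag this yourself at the end, but it is not merely a technical nuisance to be checked — the naive order filtration simply does not exhaust the ring, so the spectral-sequence argument as stated breaks down. What is actually needed is a direct verification that $\partial_{\underline{\sigma},1},\dots,\partial_{\underline{\sigma},d}$ forms a regular sequence in $\text{D}_{\underline{\sigma}}^{(\eta^\dagger)}$, which can be done by hand using the series description of the ring (thanks to the invertibility of the $(n)_{q_i}$, one can pass between the divided-power generators $\partial_{\underline{\sigma}}^{[\underline{k}]}$ and ordinary monomials $\partial_{\underline{\sigma}}^{\underline{k}}$, and left multiplication by $\partial_{\underline{\sigma},i}$ is then seen to be injective with cokernel matching the quotient by the left ideal it generates). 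Once regularity is established, the Schapira result (or an ad hoc Koszul argument on a regular sequence in a not-necessarily-filtered ring) gives the free resolution, and the rest of your computation of $\text{Ext}_{\text{D}_{\underline{\sigma}}^{(\eta^\dagger)}}(A,M) \simeq \text{DR}(M)$ goes through as you wrote it.
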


\begin{proof}
By Proposition \ref{equivate 3}, the category of finite \( A \)-modules endowed with an integrable connection \( \nabla_{\underline{q}} \) is equivalent to the category of \( \text{D}_{A,\underline{q}}^{(\infty)} \)-modules of finite type over \( A \). This equivalence relies on the hypothesis regarding the elements \( q_1, \ldots, q_d \). The equivalence between the \( \eta^\dagger \)-convergent categories follows directly from Proposition \ref{7.2}.

It remains to show that this equivalence is compatible with the cohomologies on both sides.  Recall that, by Proposition \ref{9.31},
\[
\text{D}^{(\eta^\dagger)}_{\underline{q}} \simeq \left\lbrace 
\sum_{\underline{k} \in \mathbb{N}^d} z_{\underline{k}} \partial^{[\underline{k}]}_{\underline{q}}, ~ \exists C >0,  ~\forall\underline{k} \in \mathbb{N}^d, ~\lVert z_{\underline{k}} \rVert \leq C \eta^{|\underline{k}|}
\right\rbrace.
\]
For $0 \leq e \leq d,$ we define
\[
\text{D}^{(\eta^\dagger)}_{\underline{q},e} := \left\lbrace 
\sum_{\underline{k} \in \mathbb{N}^e} z_{\underline{k}} \partial^{[\underline{k}]}_{\underline{q}}, ~ \exists C >0,  ~\forall\underline{k} \in \mathbb{N}^e, ~\lVert z_{\underline{k}} \rVert \leq C \eta^{|\underline{k}|}
\right\rbrace,
\]
so that $\text{D}^{(\eta^\dagger)}_{\underline{q},d} = \text{D}^{(\eta^\dagger)}_{\underline{q}}$ and $\text{D}^{(\eta^\dagger)}_{\underline{q},0} = A$.

We prove that the multiplication by $\partial_{\underline{q},e}$ from the right gives an injective map $\cdot \partial_{\underline{q},e}: \text{D}^{(\eta^\dagger)}_{\underline{q},e} \rightarrow\text{D}^{(\eta^\dagger)}_{\underline{q},e}$ and its cokernel is isomorphic to $\text{D}^{(\eta^\dagger)}_{\underline{q},e-1}$. The injectivity is trivial from the above expression. To show the claim on the cokernel, it suffices to prove that any element in $\text{D}^{(\eta^\dagger)}_{\underline{q},e}$ of the form $\sum_{l=1}^\infty y_l \partial^{[l]}_{\underline{q},e}$ with $y_l \in \text{D}^{(\eta^\dagger)}_{\underline{q},e-1}$ is in the image of $\cdot \partial_{\underline{q},e}$. By definition of $\text{D}^{(\eta^\dagger)}_{\underline{q},e}$, there exists some $\eta'<\eta$ and $C>0$ such that $\lVert y_l \rVert_{\eta'}\eta'^{-l} \leq C$ for all $l$, where $\lVert - \rVert_{\eta'}$ is defined as in Proposition \ref{9.31}. Then, formally, it is the image of the series $\sum_{l=1}^\infty y_l(l)_q^{-1} \partial^{[l-1]}_{\underline{q},e}$ (where $q:=q_e$) by $\cdot \partial_{\underline{q},e}$, and it suffices to prove that this series belong to $\text{D}^{(\eta^\dagger)}_{\underline{q},e}$. We suppose $q\neq 1$ because the case $q=1$ is standard. For $\eta'<\eta''<\eta$, we have
\[
\frac{\bigl\|y_l (l)_q^{-1}\bigr\|_{\eta''}}{(\eta'')^{\,l-1}}
\;\le\;
\eta'' \,\frac{\|y_l\|_{\eta'}}{\eta'^l}\,
\left(\frac{\eta'}{\eta''}\right)^{\!l}
\left\lVert\frac{q-1}{q^{\,l}-1}\right\rVert
\;\le\;
\eta''\, C\,
\left(\frac{\eta'}{\eta''}\right)^{\!l}
\left\lVert\frac{q-1}{q^{\,l}-1}\right\rVert.
\]
So, to prove the claim, it suffices to prove that, for any $0<\delta <1, ~\lVert q^l-1\rVert \gg \delta^l$ when $l \rightarrow \infty$. If $\lVert q \rVert \neq 1$, this holds because $\lVert q^l-1\rVert \geq 1$. If $\lVert q \rVert=1$, take the minimal $a \geq 1$ such that $\lVert q^a-1 \rVert <1$. For $l=ap^bc+r \geq a \text{ with } b,c,r \in \N, (c,p)=1, 0\leq r \leq a-1$, put
\[
Q_b=q^{ap^b}-1 = (1+(q^a-1))^{p^b}-1=\sum_{i=1}^{p^b} \binom{p^b}{i}(q^a-1)^i.
\]
Note that $0<\lVert Q_b\rVert <1.$ Then, if $r\neq 0$,
\[
\lVert q^l-1\rVert=\lVert(q^{ap^bc+r}-q^{ap^bc})+(q^{ap^bc}-1) \rVert = 1,
\]
because
\[
\lVert q^{ap^bc+r}-q^{ap^bc} \rVert = \rVert q^{ap^bc}\rVert \lVert q^r-1\rVert=1
\]
by minimality assumption of $a$ and
\[
\rVert q^{ap^bc}-1\rVert = \lVert (1+Q_ b)^c-1 \rVert = \lVert Q_b\rVert<1.
\]
If $r=0, \lVert q^l-1 \rVert =\lVert (1+Q_b)^c-1 \rVert = \lVert Q_b \rVert$ and if $b$ is sufficiently large,
\[
\lVert Q_b \rVert = p^{-b} \lVert q^a-1 \rVert \geq a\lVert q^a-1\rVert/l.
\]
By these equations, we conclude that
\[
\lVert q^l-1\rVert \gg 1/l \gg \delta^l \text{ when } l \rightarrow \infty,
\]
which proves the claim.

By the claim in the previous paragraph, we see that the sequence \(\partial_{\underline{q},1}, \ldots, \partial_{\underline{q},d} \) is a regular sequence in \( \text{D}^{(\eta^\dagger)}_{\underline{q}} \), and by the same argument as Proposition 1.4.3 of \cite{Schap} we can check that \(\text{SP}\left(\text{D}_{\underline{q}}^{(\eta^\dagger)}\right) \) provides a free resolution of \( A \). Therefore, we have
\[
\begin{aligned}
    \text{RHom}_{\text{D}_{\underline{q}}^{(\eta^\dagger)}}(A, M) &= \text{Hom}_{\text{D}_{\underline{q}}^{(\eta^\dagger)}}\left( \text{SP}(\text{D}_{\underline{q}}^{(\eta^\dagger)}), M \right) \\
    &= \text{Hom}_{\text{D}_{\underline{q}}^{(\eta^\dagger)}}\left( \text{SP}(\text{D}_{\underline{q}}^{(\eta^\dagger)}), \text{D}_{\underline{q}}^{(\eta^\dagger)} \right) \otimes_{\text{D}_{\underline{q}}^{(\eta^\dagger)}} M \\
    &= \text{DR}(\text{D}_{\underline{q}}^{(\eta^\dagger)}) \otimes_{\text{D}_{\underline{q}}^{(\eta^\dagger)}} M \\
    &= \text{DR}(M).
\end{aligned} 
\]
This concludes the proof.
\end{proof}

The next and last result provides a generalization in several variables of the theorem in \cite{GLQ4} and its spirit aligns with the work presented in \cite{ADV04} and \cite{pulita}.

\begin{thm}\label{thm:confluence-bis}
Let \( K \) be a non-archimedean field of characteristic 0, complete with respect to a non-archimedean norm. Let \( A \) be a Tate \( K \)-algebra, and let \( \underline{q} = (q_1, \ldots, q_d) \) be elements of \( R \) such that, for every \( i = 1, \ldots, d \) and $n \geq 1$, the \( q_i \)-integers $(n)_{q_i}$ are invertible in $K$. Assume that $A$ admits elements $\underline{x}=(x_1,\ldots,x_d)$ which are classical and symmetric (good) $\underline{q}$-coordinates as well as symmetric (good) $\underline{\emph{Id}}$-coordinates, and assume moreover that $A$ is strongly $\eta^\dagger$-convergent with respect to $\underline{q}$ and with respect to $\underline{\emph{Id}}$, and that \( \eta >\rho(\underline{\sigma}) \). Then we have the following equivalence of categories
\[
\nabla_{\underline{q}}^{\emph{Int}}\text{-}\emph{Mod}_{\emph{tf}}^{(\eta^\dagger)}(A) \simeq \nabla_{\underline{\emph{Id}}}^{\emph{Int}}\emph{-}\emph{Mod}_{\emph{tf}}^{(\eta^\dagger)}(A),
\]
which is compatible with de Rham cohomologies on both sides.
\end{thm}

\begin{proof}
We are in a situation where Theorem \ref{confluence} applies. It suffices to show that there exists an isomorphism
\[
\text{D}^{(\eta^\dagger)}_{\underline{q}} \simeq \text{D}^{(\eta^\dagger)}_{\underline{\text{Id}}}.
\]
This follows directly from Corollary \ref{6.4}.
\end{proof}

\end{document}